\documentclass[11pt]{article}


\usepackage{amsmath,amssymb,amsfonts,textcomp,amsthm,xifthen,psfrag,graphicx,color,MnSymbol}
\usepackage{subcaption,enumitem}
\usepackage[T1]{fontenc}
\usepackage{lscape}

\usepackage{hyperref}

\date{}

\oddsidemargin 0cm
\textwidth     16cm
\textheight    24cm
\topmargin      -2cm

\newtheorem{theorem}{Theorem}
\newtheorem{lemma}[theorem]{Lemma}
\newtheorem{cor}[theorem]{Corollary}
\newtheorem{prop}[theorem]{Proposition}
\newtheorem{remark}[theorem]{Remark}

\theoremstyle{definition} 

\def\ignore#1{}
\def\comment#1{{\tt (#1)}}

\newcommand{\R}{\mathbb{R}}
\newcommand{\Rt}{\mathbb{R}^2}
\newcommand{\Rtt}{\mathbb{R}^{2\times 2}}

\renewcommand{\SS}{\mathbb{S}}
\newcommand{\T}{\mathsf{T}}

\newcommand{\cE}{\mathcal{E}}

\newcommand{\cV}{\mathcal{V}}

\newcommand{\cS}{\mathcal{S}}

\newcommand{\bn}{n}
\newcommand{\bt}{t}

\newcommand{\du}{{\delta\!u}}
\newcommand{\deta}{{\delta\!\eta}}
\newcommand{\drho}{{\delta\!\rho}}
\newcommand{\dv}{{\delta\!v}}
\newcommand{\dsigma}{{\delta\!\sigma}}
\newcommand{\dtau}{{\delta\!\tau}}

\newcommand{\wtilde}{\widetilde}
\newcommand{\<}{\langle{}}
\renewcommand{\>}{\rangle}

\newcommand{\dual}[2]{\<#1\hspace*{.5mm},#2\>}
\newcommand{\vdual}[2]{(#1\hspace*{.5mm},#2)}

\DeclareMathOperator{\dev}{dev}
\DeclareMathOperator{\tr}{tr}
\DeclareMathOperator{\id}{\mathrm{id}}
\newcommand{\CC}{\mathbb{C}}
\renewcommand{\AA}{\mathbb{A}}

\newcommand{\grad}{\nabla}

\DeclareMathOperator{\curl}{curl}
\DeclareMathOperator{\Curl}{Curl}
\newcommand{\strain}[1]{\varepsilon(#1)}
\newcommand{\strainref}[1]{\widehat\varepsilon(#1)}
\DeclareMathOperator{\rot}{rot}
\renewcommand{\div}{\operatorname{div}}
\newcommand{\dDiv}{\operatorname{div div}}
\newcommand{\divref}{\widehat{\operatorname{div}}}

\newcommand{\PiolaK}[1]{\mathcal{PK}_{{#1}}}
\newcommand{\PiolaKc}[1]{\mathcal{PK}^\perp_{{#1}}}

\newcommand{\PiolaCurl}[1]{\mathcal{P}^\mathrm{curl}_{#1}}
\newcommand{\Inn}[1]{\mathcal{I}^{nn}_{#1}}
\newcommand{\XX}[1]{P^{2,1}(#1;\SS)}

\newcommand{\ctdd}{{C_\mathrm{tdd}}}
\newcommand{\ctddinv}{{C_\mathrm{tdd}^{-1}}}
\newcommand{\sreg}{{s'}}
\newcommand{\creg}{{c'}} 

\newcommand{\mesh}{\mathcal{T}}
\newcommand{\el}{T}
\newcommand{\elref}{{\widehat T}}

\newcommand{\trnn}{{\gamma_{1,\mathit{nn}}}}

\newcommand{\G}[1]{{\Gamma_\mathit{#1}}}
\newcommand{\g}[1]{{g_{#1}}}

\title{Normal-normal continuous symmetric stresses\\ in mixed finite element elasticity
\thanks{Supported by ANID-Chile through FONDECYT project 1230013}
\author{
Carsten Carstensen\thanks{
Department of Mathematics, Humboldt--Universit\"at zu Berlin,
Unter den Linden 6, 10099 Berlin, Germany,
email: {\tt cc@math.hu-berlin.de}}
\and
Norbert Heuer\thanks{
Facultad de Matem\'aticas, Pontificia Universidad Cat\'olica de Chile,
Avenida Vicu\~na Mackenna 4860, Santiago, Chile,
email: {\tt nheuer@mat.uc.cl}}}}

\begin{document}
\maketitle
\begin{abstract}
The classical continuous mixed formulation of linear elasticity with pointwise 
symmetric stresses allows for a conforming finite element discretization 
with piecewise polynomials of degree at least three. Symmetric stress approximations
of lower polynomial order are only possible when their $\div$-conformity is
weakened to the continuity 
of normal-normal components. In two dimensions, this condition is meant pointwise
along edges for piecewise polynomials,
but a corresponding characterization for general piecewise $H(\div)$ tensors
has been elusive.

We introduce such a space and establish a continuous mixed formulation of
linear planar elasticity with pointwise symmetric stresses that have, in a distributional
sense, continuous normal-normal components across the edges of a shape-regular triangulation.
The displacement is split into an $L_2$ field and a tangential trace on the skeleton
of the mesh. The well-posedness of the new mixed formulation follows with a 
duality lemma relating the normal-normal continuous stresses with the tangential traces
of displacements.

For this new formulation we present a lowest-order conforming discretization.
Stresses are approximated by piecewise quadratic symmetric tensors, whereas
displacements are discretized by piecewise linear polynomials.
The tangential displacement trace acts as a Lagrange multiplier and
guarantees global $\div$-conformity in the limit as the mesh-size tends to zero.
We prove locking-free, quasi-optimal convergence of our scheme
and illustrate this with numerical examples.

\bigskip
\noindent
{\em AMS Subject Classification}:
65N30, 
74G15, 
74S05  
\end{abstract}

\section{Introduction}

{\bf Motivation.}
Mixed formulations in linear elasticity do not suffer immediately from jumping materials 
and are typically robust in the incompressible limit, when the Lam\'e equations formally
become the Stokes equations. The related mathematical formulation on the continuous level
is classical in the Sobolev space $H(\div,\Omega;\SS)$ of pointwise symmetric and
globally $\div$-conforming stresses. 
In contrast, the design of pointwise symmetric, piecewise \emph{polynomial} stresses in
$H(\div,\Omega;\SS)$ is a rather delicate issue that started with Arnold and Winther
\cite{ArnoldW_02_MFE} two decades ago,
see also \cite{AdamsC_05_MFE,ArnoldAW_08_FES,Hu_15_FEA,HuZ_16_FEA}.
The lowest-order conforming element on a triangle has 24 degrees of freedom.
These degrees of freedom, as well as those of other conforming elements we are aware of,
include non-physical values at vertices: these vertex degrees of freedom are
not well defined even in $H^1(\Omega;\SS)\subset H(\div,\Omega;\SS)$.
In particular, vertex values of stresses are usually not available
from data in engineering applications to prescribe boundary conditions.
Piecewise quadratic conforming examples in $H(\div,\Omega;\SS)$ are not known and
no longer expected to exist. 
Remedies are to relax the pointwise symmetry constraint or
the global $\div$-conformity. 
This paper contributes to the second category and is motivated by the seminal schemes
by Pechstein and Sch\"oberl \cite{PechsteinS_11_TDN,PechsteinS_12_AMF,PechsteinS_18_ATM}
with continuous normal-normal components of pointwise symmetric stress approximations.
The stability analysis in \cite{PechsteinS_11_TDN}
is based on mesh-depending discrete norms, while \cite{PechsteinS_18_ATM}
suggests a pairing within the Sobolev spaces $H(\dDiv)$ and $H(\curl)$.
Without going into the details, we note that $H(\dDiv)$ is a natural space for bending moments
of Kirchhoff--Love plates and leads to conformity requirements that are quite different
from those of $H(\div,\Omega;\SS)$, cf.~\cite{RafetsederZ_18_DRK,FuehrerHN_19_UFK}.
Indeed, the idea of using normal-normal continuous tensors goes back to the
Hellan--Herrmann--Johnson method for plate bending with normal-normal continuous bending moments
\cite{Hellan_67_AEP,Herrmann_67_FEB,Johnson_73_CMF}.
The first results of Pechstein and Sch\"oberl \cite{PechsteinS_11_TDN}
appear too coarse and have no counterpart at the continuous level. 
The second, in \cite{PechsteinS_18_ATM}, lead to a  more detailed understanding
of the spaces of continuous normal-normal components of pointwise symmetric stresses.
But the resulting discrete schemes are \emph{not} conforming in $H(\div\div)$,
referred to as ``slightly nonconforming'' in \cite{PechsteinS_18_ATM}.
The finite elements in \cite{PechsteinS_11_TDN,PechsteinS_18_ATM}
are {\em not} locking free in the incompressible limit,
and require additional divergence terms in order to be so,
cf.~the first numerical example in \cite{PechsteinS_11_TDN}.
On the other hand, according to \cite{PechsteinS_12_AMF} the Pechstein--Sch\"oberl
schemes behave well for thin structures.
Our method has the same discrete conformity as the Pechstein--Sch\"oberl
elements: symmetric normal-normal continuous stresses and (traces of) displacements with continuous
tangential components. Whereas the Pechstein--Sch\"oberl elements are analyzed by using
low-regular Sobolev spaces ($H(\dDiv)$ and $H(\curl)$) that bear little mechanical relevance,
and thus allow for lowest-order approximations, our elements are analyzed within
energy spaces, that is, $H(\div)$ and traces of $H^1$, and require more degrees of freedom.
Correspondingly, our approximations are controlled in stronger norms compared with
the Pechstein--Sch\"oberl setting that uses discrete norms of lower regularity.

{\bf Contributions.}
This paper aims at the fundamental understanding of pointwise symmetric,
piecewise $H(\div)$ stresses, written as $H(\div,\mesh;\SS)$ with respect to an
underlying shape-regular triangulation $\mesh$.
Tensors of $H(\div,\mesh;\SS)$ have distributional normal traces
and we give a meaning to their normal-normal continuity.
This leads to a closed subspace $H_{nn}(\div,\mesh;\SS)\subset H(\div,\mesh;\SS)$.
The global $\div$-conformity is implemented weakly by the tangential traces of
displacements and generate a Lagrange multiplier.
The difference between $H_{nn}(\div,\mesh;\SS)$ and the globally
$\div$-conforming space $H(\div,\Omega;\SS)$ consists of tensors with non-zero
tangential-normal jumps across the skeleton $\cS$ of $\mesh$.
Therefore, the Lagrange multiplier space, denoted by $H^{1/2}_t(\cS;\R^2)$,
lives on the skeleton. Naturally, these spaces and their norms are mesh dependent.
But it is critical that stability and other constants appearing in the analysis
are mesh \emph{independent}.
This is achieved by using techniques that have been developed to analyze
discontinuous Petrov--Galerkin (DPG) schemes with optimal test functions.
The latter framework, introduced by Demkowicz and Gopalakrishnan \cite{DemkowiczG_11_CDP},
usually employs ultraweak formulations that give rise to independent trace variables,
cf.~\cite{CessenatD_98_AUW,BottassoMS_02_DPG}.
Indeed, our setting with normal-normal continuous stresses and tangential
traces of displacements can be interpreted as a mixed formulation that is ultraweak
with respect to displacements.
It is therefore natural for our scheme that displacements
be represented both by an $L_2$ field variable and an independent (tangential) trace.
The mentioned ``DPG-techniques'' refer to the canonical definition of product spaces and
related trace operators, the duality between them relating jumps with traces,
and a splitting technique to deduce inf-sup stability,
cf.~\cite{CarstensenDG_16_BSF,FuehrerHN_19_UFK} for details.
We note that our continuous mixed formulation extends to three space dimensions.
But in this paper we focus on the planar case, both for the continuous and discrete settings,
cf.~Remark~\ref{rem_3d} below.

Having our new well-posed variational formulation at hand,
we propose a lowest-order approximation on triangular meshes
that consists of piecewise quadratic symmetric stress tensors
with continuous linear normal-normal traces, discontinuous piecewise linear displacements and
tangential traces of continuous piecewise linear displacements.
There is a corresponding bounded interpolation
operator for stresses in $H(\div,\mesh;\SS)\cap L_q(\Omega;\SS)$ ($q>2$).
Our stress element has $15$ \emph{physical} degrees of freedom,
and it is crucial that they make the interpolation operator
commute with the $L_2$-projection onto piecewise linear polynomials.
The degrees of freedom can be reduced to two moments per edge by static condensation.
Condensing also the displacement field variable, this amounts to a stable
mixed pair of six stress and six displacement degrees of freedom.
Disregarding boundary conditions, this gives $2|\cE|+2|\cV|$ total degrees of freedom where
$\cE$ and $\cV$ are the sets of edges and vertices of cardinality $|\cE|$ and $|\cV|$, respectively.
Our scheme is locking free and converges quasi-optimally
in the piecewise $H(\div)$ norm for the stress,
the $L_2$ norm for the displacement field variable,
and the natural trace norm of $H^1$ for the displacement trace variable.

We claim that both the continuous setting and discretization extend to three space dimensions.
Precise definitions and proofs require additional techniques.
This is left to future research.
\footnote{Note after publication: 3d case treated in \url{http://arxiv.org/abs/2502.00609}.}

{\bf Model problem.}
We consider a bounded polygonal Lipschitz domain $\Omega\subset\R^2$
and decompose its boundary $\Gamma:=\partial\Omega$ into non-intersecting relatively open pieces
\begin{align} \label{split}
   \Gamma=\mathrm{closure}(\G{hc}\cup\G{sc}\cup\G{ss}\cup\G{sf}),
\end{align}
assigning boundary conditions
of hard clamped (``$\mathit{hc}$''), soft clamped (``$\mathit{sc}$''),
simply supported (``$\mathit{ss}$'') and traction (``$\mathit{sf}$'' for stress free) types.
Individual sets are either empty or are unions of connected sets
of positive measure, subject to the validity of Korn's inequality.
We note that an extension of our formulation and method to piecewise smooth
curved boundaries is possible. The analysis of the variational formulation requires
minor changes but the discrete analysis becomes more technical due to the
appearing non-linear element mappings.

The model problem of plane linear elasticity reads
\begin{subequations} \label{model}
\begin{align} \label{PDE}
   \sigma=2\mu\strain{u}+\lambda(\div u)\id
   \quad\text{and}\quad -\div\sigma=f\quad\text{in}\ \Omega
\end{align}
with boundary conditions
\begin{equation} \label{BC}
\begin{alignedat}{4}
   u&=\g{D} &&\text{on}\ \G{hc},\qquad
   &&u\cdot\bn=\g{D,n},\ \bt\cdot\sigma\bn=\g{N,t}\quad &&\text{on}\ \G{sc},
   \\
   u\cdot\bt&=\g{D,t},\ \bn\cdot\sigma\bn=\g{N,n}\quad &&\text{on}\ \G{ss},
   &&\sigma\bn=\g{N} &&\text{on}\ \G{sf}.
\end{alignedat}
\end{equation}
\end{subequations}
Here, $\strain{u}$ and $\id$ are the strain and identity tensors, respectively,
and $\mu,\lambda>0$ are the Lam\'e constants, related to Young's modulus $E>0$ and Poisson
ratio $\nu>0$ by
\[
   \mu = \frac E{2(1+\nu)}\quad\text{and}\quad \lambda=\frac{E\nu}{(1+\nu)(1-2\nu)}.
\]
For ease of presentation, we assume throughout that $E$ is fixed but consider a
general Poisson ratio $\nu<1/2$.
For short, we write $\sigma=\CC\strain{u}$ with elasticity tensor $\CC$, and
let $\AA:=\CC^{-1}$ be the compliance tensor. Furthermore,
$\bn$ and $\bt$ are the exterior unit normal and tangential vectors on $\Gamma$, and
we assume that data are sufficiently regular and consistent for the existence of a solution
$u\in H^1(\Omega;\R^2)$ and $\sigma\in H(\div,\Omega;\R^{2\times 2})$.
In particular, $f,\g{D},\g{N}$ are vector-valued functions, whereas
$\g{D,n},\g{D,t},\g{N,n},\g{N,t}$ are scalar functions.

{\bf Outline.} 
In the following section we introduce the
required spaces and norms, in particular operators, spaces and norms related with traces.
We define stress fields with continuous normal-normal components via duality with
$H^1$ vector fields that have vanishing tangential traces.
Such restrictions of $H^1$ vector fields are by no means canonical and require some technicalities.
We introduce the canonical $H^1$ vector trace operator $\gamma$, and the
tangential and normal component traces $\gamma_t$ and $\gamma_n$.
We stress that $\gamma_t,\gamma_n$ are only used to define special trace spaces and
the normal-normal continuity of stresses. Those spaces are analyzed in \S\ref{sec_nn}.
In \S\ref{sec_trace_nn} we invoke the space $H_{nn}(\div,\mesh;\SS)$
of normal-normal continuous stresses to define an $H^1$ vector trace $\trnn{}$ by duality.
Since (on edges not lying on the boundary) this trace operator is induced by
the tangential-normal jumps of stresses, it gives rise to the tangential component of $\gamma$.
On the boundary, $\trnn{}$ gives the full trace of $H^1$ vector functions.
We note that trace operators $\gamma$ and $\trnn{}$ are identical when
considered in duality with normal-normal continuous stress fields.
The analysis of $\trnn{}$ in \S\ref{sec_trace_nn} is in the spirit
of DPG techniques presented in \cite{CarstensenDG_16_BSF,FuehrerHN_19_UFK}.
In \S\ref{sec_mixed} we then invoke trace operator $\trnn{}$ to derive
a mixed formulation and prove its well-posedness in Theorem~\ref{thm_VF}.
The lowest-order normal-normal stress element and corresponding piecewise
polynomial approximation space and interpolation operator are presented
and analyzed in \S\ref{sec_approx}.
The resulting mixed scheme is studied in \S\ref{sec_fem}, with
locking-free quasi-optimal convergence in Theorem~\ref{thm_Cea} and
an a priori error estimate in Theorem~\ref{thm_FEM}.
In \S\ref{sec_num} we present numerical examples for smooth and singular
solutions and different boundary conditions.
They confirm the locking-free convergence with optimal order of our scheme.

{\bf Notation.}
We use the standard differential operators $\grad$ and $\div$. Furthermore,
$\curl \varphi:=(\grad \varphi)^\perp$ where
$v^\perp :=(v_2, -v_1)^\T$ for vector functions $v=(v_1,v_2)^\T$,
and $\rot v:=\div v^\perp$.
For vector functions $v$ and tensor functions $\tau$,
$\grad v$ and $\div\tau$ are calculated component and row-wise, respectively,
strain tensor $\strain{v}:=(\grad v+\grad v^\T)/2$ is the symmetric gradient of $v$,
and we set $\SS:=\{A\in\R^{2\times 2};\;A^\T=A\}$.
For a tensor $\tau$ we denote by $\dev\tau:=\tau - \tr(\tau) \id/2$
its deviatoric part with trace $\tr(\tau):=\tau :\id$ (``$:$'' is the Frobenius product).

Throughout, $a\lesssim b$ means that $a\le Cb$ with a generic constant $C>0$
that is independent of involved functions and mesh $\mesh$.
Notation $a\gtrsim b$ means that $b\lesssim a$,
and $a\simeq b$ is equivalent to $a\lesssim b$ and $a\gtrsim b$.

\section{Spaces and operators} \label{sec_ops}

Given any subdomain $\omega$ of $\Rt$ or a line segment,
we use the canonical Lebesgue and Sobolev spaces $L_q(\omega)$ and $H^s(\omega)$
($q\ge 1$, $0<s\le 2$) of scalar functions,
and need the corresponding spaces $L_q(\omega;U)$, $H^s(\omega;U)$ of functions
with values in $U\in\{\R,\Rt,\SS\}$. The $L_2(\omega;U)$ duality and norm
will be generically denoted by $\vdual{\cdot}{\cdot}_\omega$ and $\|\cdot\|_\omega$.
Occasionally, we also need boundary dualities on subsets $\partial\omega'\subset\partial\omega$
for $\omega\subset\Rt$.
These dualities are denoted by $\dual{\cdot}{\cdot}_{\partial\omega'}$ and defined with
$L_2(\partial\omega';U)$ as pivot space.
We use the space
\begin{align*}
   H^{s,r}(\div,\omega;\SS)
   &:= \{\tau\in H^s(\omega;\SS);\; \div\tau\in H^r(\omega;\Rt)\}\quad
   \text{for}\ 0\le s\le 1,\; 0\le r\le 2,
\end{align*}
and write $H(\div,\omega;\SS):=H^{0,0}(\div,\omega;\SS)$.
For $0<s<1$, Sobolev spaces $H^s(\omega;U)$ are normed with Sobolev--Slobodeckij (semi-)norms
$|\cdot|_{s,\omega}$ and $\|\cdot\|_{s,\omega}$, with canonical extension
to orders $1<s<2$. We use the same notation for $s\in\{1,2\}$,
then using the standard norm and semi-norm, except for $U=\Rt$, in which case
$|\cdot|_{1,\omega}:=\|\strain{\,\cdot\,}\|_\omega$.
Furthermore, $H^1_0(\omega;U)$ is the space of $H^1(\omega;U)$ functions with homogeneous
trace on the boundary $\partial\omega$, and $H^{-1}(\omega;U):=H^1_0(\omega;U)^*$
is the topological dual with canonical operator norm $\|\cdot\|_{-1,\omega}$.
We identify $L_2(\omega;U)=H^0(\omega;U)$,
and generally drop index $\omega$ when $\omega=\Omega$.

We consider a regular (family of) mesh(es) $\mesh$
consisting of shape-regular triangles $\el$ covering $\Omega$,
$\cup_{\el\in\mesh}\overline{\el}=\overline{\Omega}$.
The set of (open) edges of $\mesh$ is denoted by $\cE$,
and $\cE(\Omega):=\{e\in\cE;\; e\subset\Omega\}$.
The set of edges of $\el\in\mesh$ is $\cE(\el)$.
We also need the exterior unit normal and tangential vectors,
defined almost everywhere on $\partial\el$, generically denoted by $\bn$ and $\bt$,
respectively. For every edge $e\in\cE$ we select unique unit normal and tangential vectors
$\bn_e,\bt_e$ along $e$ that coincide with the exterior normal and tangential vectors,
respectively, along $\partial\Omega$ if $e\subset\partial\Omega$.

Mesh $\mesh$ induces product spaces denoted as before, but replacing
$\omega$ with $\mesh$, e.g., $H^1(\mesh):=\Pi_{\el\in\mesh} H^1(\el)$.
We will identify elements of product spaces with piecewise defined functions
on $\Omega$, e.g., $v\in H^1(\Omega)$ is identified with $v\in H^1(\mesh)$, and
$v\in H^1(\mesh)$ corresponds to an element $v\in L_2(\Omega)$.
Furthermore, $\div_\mesh$ refers to the piecewise divergence operator.

\subsection{Trace spaces} \label{sec_trace}

Product spaces and Green's formula give rise to trace operators
with support on the skeleton $\cS:=\cup\{\partial\el; \el\in\mesh\}$.
Specifically, we introduce the canonical trace operator
\begin{align*}
   \gamma:\;&\left\{\begin{array}{cll}
               H^1(\Omega;\Rt) &\rightarrow& H(\div,\mesh;\SS)^*,\\
               v &\mapsto& \dual{\gamma(v)}{\tau}_\cS
               := \vdual{\strain{v}}{\tau} + \vdual{v}{\div\tau}_\mesh,
            \end{array}\right.
\end{align*}
and
\begin{align*}
   \gamma_t:\;&\left\{\begin{array}{cll}
               H^1(\Omega;\Rt) &\rightarrow& H^1(\mesh)^*,\\
               v &\mapsto& \dual{\gamma_t(v)}{z}_\cS
               := \vdual{\rot v}{z} - \vdual{v}{\curl z}_\mesh
            \end{array}\right.
   \quad\text{(tangential trace)},\\[1em]
   \gamma_n:\;&\left\{\begin{array}{cll}
               H^1(\Omega;\Rt) &\rightarrow& H^1(\mesh)^*,\\
               v &\mapsto& \dual{\gamma_n(v)}{z}_\cS
               := \vdual{\div v}{z} + \vdual{v}{\grad z}_\mesh
            \end{array}\right.
   \qquad\text{(normal trace)}.
\end{align*}
Here, $\vdual{\cdot}{\cdot}_\mesh$ is the generic notation for
$L_2(\mesh;U)$ dualities, $U\in\{\R,\Rt,\SS\}$.

The kernels of $\gamma_t$ and $\gamma_n$
\[
   H^1_n(\Omega,\cS;\Rt):=\mathrm{ker}(\gamma_t),\quad
   H^1_t(\Omega,\cS;\Rt):=\mathrm{ker}(\gamma_n)
\]
are closed subspaces of $H^1(\Omega;\Rt)$.
We therefore have the following trace spaces,
\begin{align*}
      H^{1/2}(\cS;\Rt)       &:= \gamma(H^1(\Omega;\Rt)),
   && H^{1/2}_0(\cS;\Rt)      := \gamma(H^1_0(\Omega;\Rt)),\\
      \wtilde H^{1/2}_n(\cS;\Rt)     &:= \gamma(H^1_n(\Omega,\cS;\Rt)),
   && \wtilde H^{1/2}_{0,n}(\cS;\Rt)  := \gamma(H^1_n(\Omega,\cS;\Rt)\cap H^1_0(\Omega;\Rt)),\\
      \wtilde H^{1/2}_t(\cS;\Rt)     &:= \gamma(H^1_t(\Omega,\cS;\Rt)),
   && \wtilde H^{1/2}_{0,t}(\cS;\Rt)  := \gamma(H^1_t(\Omega,\cS;\Rt)\cap H^1_0(\Omega;\Rt)).
\end{align*}
Spaces $\wtilde H^{1/2}_n(\cS;\Rt)$, $\wtilde H^{1/2}_{0,n}(\cS;\Rt)$,
$\wtilde H^{1/2}_t(\cS;\Rt)$, and $\wtilde H^{1/2}_{0,t}(\cS;\Rt)$
are merely needed to define and analyze the normal-normal continuity of tensors.

We note that, due to the definition of the trace operators with product test spaces,
traces are elements of product spaces, e.g.,
$H^{1/2}(\cS;\Rt)\subset \Pi_{\el\in\mesh} H^{1/2}(\partial\el;\Rt)$ with obvious definition
of the local spaces $H^{1/2}(\partial\el;\Rt)$.
Since elements of $H^{1/2}(\cS;\Rt)$ are traces of $H^1(\Omega;\Rt)$ functions
their components on edges $e\in\cE$ are univariate.
We can therefore identify functions
$\varphi=(\varphi_\el)_{\el\in\mesh}\in H^{1/2}(\cS;\Rt)$ with their components on edges,
$\varphi=(\varphi_e)_{e\in\cE}\in H^{1/2}(\cE;\Rt):=\Pi_{e\in\cE} H^{1/2}(e;\Rt)$.

Let us introduce some further notation.
For a function $v\in H^1(\el;U)$ ($\el\in\mesh$, $U\in\{\R,\Rt\}$)
we need its trace on the boundary $\partial\el$ and on edges $e\in\cE(\el)$.
They will simply be denoted by $v|_{\partial\el}\in H^{1/2}(\partial\el;U)$
and $v|_e\in H^{1/2}(e;U)$.
We also need the trace space $\wtilde H^{1/2}(e)$ which can be characterized as
the restriction of functions $v\in H^{1/2}(\partial T)$
with $v|_{e'}=0$ ($e\not=e'\in\cE(\el)$). For details we refer to \cite{McLean_00_SES}.
We denote $\wtilde H^{1/2}(\cE):=\Pi_{e\in\cE} \wtilde H^{1/2}(e)$.

There is an inherent duality between $H^{1/2}(\cS;\Rt)$ and $H(\div,\mesh;\SS)$,
\[
   \dual{\varphi}{\tau}_\cS := \dual{\gamma(v)}{\tau}_\cS
   \quad (\varphi\in H^{1/2}(\cS;\Rt), \tau\in H(\div,\mesh;\SS))
\]
with any element $v\in H^1(\Omega;\Rt)$ such that $\gamma(v)=\varphi$.
Using this duality, we finally introduce the space of pointwise symmetric stresses
with continuous normal-normal components across $\cS$ as
\[
   H_{nn}(\div,\mesh;\SS) :=
   \{\tau\in H(\div,\mesh;\SS);\;
     \dual{\varphi}{\tau}_\cS=0\ \forall \varphi\in \wtilde H^{1/2}_{0,n}(\cS;\Rt)\}.
\]

\subsection{Analysis of spaces $\wtilde H^{1/2}_n(\cS;\Rt)$, $\wtilde H^{1/2}_t(\cS;\Rt)$, and
$H_{nn}(\div,\mesh;\SS)$.} \label{sec_nn}

We characterize the elements of trace spaces $\wtilde H^{1/2}_t(\cS;\Rt)$ and $\wtilde H^{1/2}_n(\cS;\Rt)$.
It is well known that the continuity of functions of $H^{1/2}$ on curves cannot be imposed locally
(at points). This is related to the fact that the order $1/2$ of Sobolev spaces on
one-dimensional manifolds is the limit case in which point evaluations are not well defined.
We therefore have to use the spaces $\wtilde H^{1/2}(e)$ ($e\in\cE$) introduced before. They reflect a
certain decaying condition of functions at the endpoints of $e$, this being equivalent to
our definition which is the continuity in $H^{1/2}$ of extension by zero onto Lipschitz
curves containing $e$.
For the next statement, recall the identification of
$\varphi=(\varphi_\el)_{\el\in\mesh}\in H^{1/2}(\cS;\Rt)$ with elements of the product space
$\varphi=(\varphi_e)_{e\in\cE}\in H^{1/2}(\cE;\Rt)$. For such functions we will abbreviate
$\varphi\cdot\bn|_\cE:=(\varphi_e\cdot\bn_e)_{e\in\cE}$ and
$\varphi\cdot\bt|_\cE:=(\varphi_e\cdot\bt_e)_{e\in\cE}$.

\begin{prop} \label{prop_trace}
Any function $\varphi=(\varphi_e)_{e\in\cE}\in H^{1/2}(\cS;\Rt)$ satisfies
\begin{align*}
   &\varphi\in \wtilde H^{1/2}_t(\cS;\Rt) \quad\Leftrightarrow\quad
   \varphi\cdot\bn|_\cE=0\quad\text{and}\quad
   \varphi\cdot\bt|_\cE\in \wtilde H^{1/2}(\cE)\\
\intertext{and}
   &\varphi\in \wtilde H^{1/2}_n(\cS;\Rt) \quad\Leftrightarrow\quad
   \varphi\cdot\bt|_\cE=0\quad\text{and}\quad
   \varphi\cdot\bn|_\cE\in \wtilde H^{1/2}(\cE).
\end{align*}
\end{prop}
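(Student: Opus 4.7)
The plan is to prove the two characterizations separately; by the symmetry between normal and tangential (swap $\bn\leftrightarrow\bt$), a full argument for $\wtilde H^{1/2}_n(\cS;\Rt)$ immediately implies the statement for $\wtilde H^{1/2}_t(\cS;\Rt)$. A preparatory step re-expresses $\gamma_t v=0$ and $\gamma_n v=0$ edge-wise. Applying Green's formula triangle-by-triangle to the identities defining $\gamma_t$ and $\gamma_n$ converts $\dual{\gamma_t v}{z}_\cS$ and $\dual{\gamma_n v}{z}_\cS$ into $\sum_{T\in\mesh}\int_{\partial T}(v\cdot\bt_T)z$ and $\sum_{T\in\mesh}\int_{\partial T}(v\cdot\bn_T)z$. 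Testing against $z\in H^1(\mesh)$ supported in a single triangle with arbitrary trace on a single edge yields $v\in H^1_n(\Omega,\cS;\Rt)$ iff $v\cdot\bt_e|_e=0$ for every $e\in\cE$, and the analogous statement for $H^1_t$. In particular, the identities $\varphi\cdot\bt|_\cE=0$ and $\varphi\cdot\bn|_\cE=0$ in the proposition simply encode the vanishing of the corresponding trace component; the content to be proved is the $\wtilde H^{1/2}$-regularity of the remaining component.

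For the forward direction of the $\wtilde H^{1/2}_n$ statement I fix $v\in H^1_n(\Omega,\cS;\Rt)$, an edge $e\in\cE$, an endpoint $V$ of $e$, a triangle $T\in\mesh$ with $e\in\cE(T)$, and the second edge $e'\in\cE(T)$ incident to $V$. Consider the scalar $\psi:=v\cdot\bt_{e'}\in H^1(T)$, where $\bt_{e'}\in\Rt$ is treated as a fixed Euclidean vector. The hypothesis $v\cdot\bt_{e'}|_{e'}=0$ gives $\psi|_{e'}=0$, so $\psi|_{\partial T}\in H^{1/2}(\partial T)$ restricts to $e\cup e'$ as a function in $H^{1/2}(e\cup e')$ that vanishes on $e'$. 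Viewing $e\cup e'$ as the ambient Lipschitz curve, this identifies $\psi|_e$ as an element of $\wtilde H^{1/2}(e)$-type at the endpoint $V$. On $e$ the identity $v\cdot\bt_e|_e=0$ gives $v|_e=(v\cdot\bn_e)\bn_e$, hence $\psi|_e=(v\cdot\bn_e)(\bn_e\cdot\bt_{e'})$ with a nonzero factor $\bn_e\cdot\bt_{e'}$ bounded below in absolute value by shape regularity. Dividing out yields the corresponding behavior of $(v\cdot\bn_e)|_e$ at $V$. Applying the same argument to the third edge $e''\in\cE(T)$ incident to the other endpoint of $e$ delivers the analogous condition there, and localization near each endpoint combines the two pieces into $(v\cdot\bn_e)|_e\in\wtilde H^{1/2}(e)$.

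For the reverse direction I reconstruct an $H^1$-lifting from the data. On each edge $e$, the identity $\varphi_e=(\varphi_e\cdot\bn_e)\bn_e$ together with $\varphi_e\cdot\bn_e\in\wtilde H^{1/2}(e)$ shows that the extension by zero of $\varphi_e$ from $e$ to any $\partial T$ with $e\in\cE(T)$ belongs to $H^{1/2}(\partial T;\Rt)$. Summing the three such extensions on $\partial T$ delivers $\varphi|_{\partial T}\in H^{1/2}(\partial T;\Rt)$, which a standard right inverse of the trace lifts to $v_T\in H^1(T;\Rt)$ with that prescribed boundary trace. On every interior edge the two sides share the common trace $\varphi_e$, so the local pieces assemble into $v\in H^1(\Omega;\Rt)$. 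By construction $v\cdot\bt_e|_e=\varphi_e\cdot\bt_e=0$ and $\gamma(v)=\varphi$, placing $\varphi\in\wtilde H^{1/2}_n(\cS;\Rt)$ as required.

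The principal obstacle lies in the forward direction, at the extraction of $\wtilde H^{1/2}(e)$-membership near a vertex. The decisive idea is the choice of auxiliary scalar: working with $v\cdot\bt_{e'}$ for the fixed Euclidean vector $\bt_{e'}$ of a neighboring edge (rather than with, say, $v\cdot\bn_e$) guarantees vanishing on the entire neighboring edge $e'$, which is precisely what activates the extension-by-zero characterization of $\wtilde H^{1/2}$ on the polyline $e\cup e'$. Once this is in place, the remaining technicalities are the non-degeneracy of the geometric factor $\bn_e\cdot\bt_{e'}$ under shape regularity, the parallel treatment at the second endpoint of $e$, and the boundary case $e\subset\partial\Omega$, where only one triangle is available but the construction is identical; the $\wtilde H^{1/2}_t$ case then follows verbatim by interchanging $\bn$ and $\bt$.
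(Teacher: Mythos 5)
Your proof is correct, but both halves of it depart from the paper's argument in instructive ways.

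For the forward direction the paper builds a single auxiliary \emph{affine} vector field $g=(\bt_1+c_2\bn_1)\lambda_3+(\bt_1+c_3\bn_1)\lambda_2$ on the whole triangle $T$, chosen so that each summand is a barycentric multiple of a neighboring normal $\bn_j$; the dot product $w=g\cdot v$ then vanishes on \emph{both} neighboring edges at once, so $w|_{\partial T}$ is itself the extension by zero of $v\cdot\bt_1|_{e_1}$ and the $\wtilde H^{1/2}$ membership drops out in one shot. You instead take a \emph{constant} test vector $\bt_{e'}$ (resp.\ $\bt_{e''}$) per endpoint, which only gives vanishing on one neighboring edge at a time and therefore only controls the decay of $v\cdot\bn_e$ at that single vertex; you then need to glue the two one-sided conditions with cutoffs. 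That gluing step is fine (it amounts to the standard fact that $\wtilde H^{1/2}(e)$ is local at the two endpoints of $e$), but it is an extra lemma that the paper's construction avoids entirely. What your version buys is that it uses only fixed Euclidean vectors, so the auxiliary scalars are visibly $H^1$ without any product-rule argument; what it costs is the localization. You should also make explicit that $\bn_e\cdot\bt_{e'}\ne 0$ because two edges of a triangle are never parallel -- you don't need a quantitative shape-regularity lower bound for the qualitative conclusion.

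For the reverse direction, the reconstruction you carry out (re-lifting from edge data and gluing across the skeleton) is correct but redundant. Since $\varphi\in H^{1/2}(\cS;\Rt)$ is by definition $\gamma(w)$ for some $w\in H^1(\Omega;\Rt)$, the hypothesis $\varphi\cdot\bt|_\cE=0$ already forces $w\cdot\bt_e|_e=0$ for all $e$, hence by your own preparatory edge-wise characterization $w\in\ker(\gamma_t)=H^1_n(\Omega,\cS;\Rt)$ and $\varphi=\gamma(w)\in\wtilde H^{1/2}_n(\cS;\Rt)$ immediately -- the $\wtilde H^{1/2}(\cE)$ hypothesis is not even needed in this direction, it is a consequence (which the forward direction then supplies). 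This is presumably why the paper does not bother to write out the reverse implication.
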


\begin{proof}
We prove the first statement. The second follows analogously.
To this end, let $\varphi=\gamma(v)\in H^{1/2}(\cS;\Rt)$ be given with $v\in H^1(\Omega;\Rt)$.
For an element $\el\in\mesh$ let $x_j$ denote its vertices with opposite edges $e_j$ ($j=1,2,3$).
The corresponding barycentric coordinates and tangential (resp. normal) vectors are
$\lambda_j$ and $\bt_j$ (resp. $\bn_j$). Assuming that $v\cdot\bn_j|_{e_j}=0$ ($j=1,2,3$)
we have to show that $v\cdot\bt_j|_{e_j}\in\wtilde H^{1/2}(e_j)$ ($j=1,2,3$).

It is enough to show that $v\cdot\bt_1|_{e_1}\in\wtilde H^{1/2}(e_1)$.
There are constants $c_2,c_3,\alpha_2,\alpha_3\in \R$ with $\alpha_2,\alpha_3\not=0$ such that
\[
   \bt_1+c_2\bn_1=\alpha_2\bn_2\quad\text{and}\quad \bt_1+c_3\bn_1=\alpha_3\bn_3.
\]
In fact, $\{\bt_1,\bn_1\}$ being a basis of $\Rt$, there are numbers $\xi_j,\eta_j\in\R$ ($j=2,3$)
such that $\xi_j\bt_1+\eta_j\bn_1=\bn_j$ ($j=2,3$). In particular, $\xi_j\not=0$ since, otherwise,
$\bn_1=\bn_j$ ($j\in\{2,3\}$). It follows that $c_j=\eta_j/\xi_j$ and $\alpha_j=1/\xi_j$ ($j=2,3$).
We continue to select $g:=(\bt_1+c_2\bn_1)\lambda_3+(\bt_1+c_3\bn_1)\lambda_2$ and define
$w:=g\cdot v|_\el\in H^1(\el)$. Since by assumption $v\cdot\bn_j|_{e_j}=0$ ($j=1,2,3$),
$w$ satisfies
\begin{align*}
   w|_{e_1}= \lambda_3(\bt_1+c_2\bn_1)\cdot v|_{e_1}+\lambda_2(\bt_1+c_3\bn_1)\cdot v|_{e_1}=
             (\lambda_3+\lambda_2)\bt_1\cdot v|_{e_1}=v\cdot\bt_1|_{e_1}
\end{align*}
and
\begin{align*}
   w|_{e_2}=\lambda_3(\bt_1+c_2\bn_1)\cdot v|_{e_2}=\alpha_2\lambda_3v\cdot\bn_2|_{e_2}=0,\quad
   w|_{e_3}=\lambda_2(\bt_1+c_3\bn_1)\cdot v|_{e_3}=\alpha_3\lambda_2v\cdot\bn_3|_{e_3}=0.
\end{align*}
In other words, $w|_{\partial\el}\in H^{1/2}(\partial\el)$
is a zero extension of $v\cdot\bt_1|_{e_1}$ to $e_2$ and $e_3$,
that is, $\varphi\cdot\bt_1|_{e_1}=v\cdot\bt_1|_{e_1}\in\wtilde H^{1/2}(e_1)$.
\end{proof}

\begin{prop} \label{prop_cont}
The inclusion
\begin{align} \label{dense}
   \wtilde H^{1/2}_{0,n}(\cS;\Rt)\oplus\wtilde H^{1/2}_{0,t}(\cS;\Rt) &\subset H^{1/2}_0(\cS;\Rt)
\end{align}
is dense. If there exists (at least) one edge $e\in\cE(\Omega)$ that does not touch the boundary,
$\mathrm{dist}(e,\G{})>0$, then this inclusion is strict.
Furthermore, any $\tau\in H(\div,\mesh;\SS)$ satisfies
\begin{align} \label{div_cont}
   \tau\in H(\div,\Omega;\SS)
   \quad\Leftrightarrow\quad
   \dual{\varphi}{\tau}_\cS = 0\quad
   \forall \varphi\in \wtilde H^{1/2}_{0,n}(\cS;\Rt)\oplus\wtilde H^{1/2}_{0,t}(\cS;\Rt).
\end{align}
\end{prop}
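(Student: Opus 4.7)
The proposition bundles three statements, which I would address in turn.

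\emph{Density.} Starting from $\varphi = \gamma(v)$ with $v\in H^1_0(\Omega;\Rt)$, I would invoke the fact that points have vanishing $H^1$-capacity in two dimensions, so the subspace $C^\infty_c(\Omega\setminus\cV;\Rt)$ (with $\cV$ the finite vertex set of $\mesh$) is dense in $H^1_0(\Omega;\Rt)$. Let $v_\varepsilon$ approximate $v$ in $H^1_0$. Since $v_\varepsilon$ vanishes near every endpoint of every edge $e\in\cE$, both scalar components $v_\varepsilon\cdot\bn_e$ and $v_\varepsilon\cdot\bt_e$ belong to $\wtilde H^{1/2}(e)$. On every element $T$ the edge-wise data $\{(v_\varepsilon\cdot\bn_e)\bn_e\}_{e\subset\partial T}$ assembles to an $H^{1/2}(\partial T)$ function (continuous at vertices because it vanishes there) and admits an $H^1(T)$ lift; matching these lifts across shared edges (using the fixed edge normal $\bn_e$) produces $w_n\in H^1_n(\Omega,\cS;\Rt)\cap H^1_0(\Omega;\Rt)$. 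The analogous construction with tangential components yields $w_t\in H^1_t(\Omega,\cS;\Rt)\cap H^1_0(\Omega;\Rt)$. From $v_\varepsilon|_e=(v_\varepsilon\cdot\bn_e)\bn_e+(v_\varepsilon\cdot\bt_e)\bt_e$ on every edge I get $\gamma(v_\varepsilon)=\gamma(w_n)+\gamma(w_t)$, and density follows upon letting $\varepsilon\to 0$.

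\emph{Strictness.} Under the hypothesis, pick an interior vertex $x_0$ of the edge $e\in\cE(\Omega)$ with $\mathrm{dist}(e,\Gamma)>0$ and a test function $v\in C^\infty_c(\Omega;\Rt)$ constantly equal to some vector $a$ (neither parallel nor orthogonal to $\bn_e$) in a neighborhood of $x_0$. Then $\gamma(v)\in H^{1/2}_0(\cS;\Rt)$, but both $v\cdot\bn_e$ and $v\cdot\bt_e$ are nonzero constants near the endpoint $x_0$, so neither belongs to $\wtilde H^{1/2}(e)$. By Proposition~\ref{prop_trace}, every element of $\wtilde H^{1/2}_{0,n}(\cS;\Rt)\oplus\wtilde H^{1/2}_{0,t}(\cS;\Rt)$ has both edgewise components in $\wtilde H^{1/2}(e)$, so $\gamma(v)$ is outside the sum.

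\emph{Equivalence~\eqref{div_cont}.} The forward implication is immediate: for $\tau\in H(\div,\Omega;\SS)$ and any lift $v\in H^1_0(\Omega;\Rt)$ of $\varphi$ global integration by parts gives $\dual{\varphi}{\tau}_\cS=(\strain{v},\tau)+(v,\div\tau)=0$. For the converse, the functional $\varphi\mapsto\dual{\varphi}{\tau}_\cS$ is continuous on $H^{1/2}_0(\cS;\Rt)$ (bounded by $\|v\|_{H^1}\|\tau\|_{H(\div,\mesh;\SS)}$ for any lift $v$), so the density just proved extends the vanishing hypothesis to all of $H^{1/2}_0(\cS;\Rt)$. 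Testing with $\gamma(v)$ for $v\in C^\infty_c(\Omega;\Rt)$ identifies the distributional divergence of $\tau$ with the piecewise $L_2$-field $\div_\mesh\tau$, whence $\tau\in H(\div,\Omega;\SS)$.

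\emph{Anticipated obstacle.} The delicate point is the element-wise lifting inside the density argument: the prescribed data on different edges of a single triangle must be compatible at their shared vertex to produce an $H^{1/2}(\partial T)$ trace, and each edge-wise component must individually lie in $\wtilde H^{1/2}(e)$ for the lift to land in the target subspace characterized by Proposition~\ref{prop_trace}. The vertex-vanishing property obtained from the $H^1$-capacity density is exactly what guarantees both requirements simultaneously; without it, splitting a trace edge-wise into pure normal and pure tangential parts generically yields incompatible values at vertices and data that fails to sit in $H^{1/2}(\partial T)$.
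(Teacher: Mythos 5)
Your proof is correct and rests on the same key fact as the paper's, namely the density in $H^1_0(\Omega;\Rt)$ of smooth functions vanishing near the interior vertices (equivalently, the vanishing $H^1$-capacity of points in two dimensions); the strictness and equivalence arguments also parallel the paper's. The one genuine difference is in the density step: the paper splits $\phi=\phi_n+\phi_t$ directly in the domain via a partition of unity subordinate to the edge patches $\omega_e$ (so that each $\phi_e\in C^\infty_0(\omega_e)$ has skeleton trace supported on $e$ alone, and the pointwise normal/tangential split with respect to $\bn_e,\bt_e$ immediately lands $\phi_n\in H^1_n$ and $\phi_t\in H^1_t$), whereas you decompose the trace data edge-wise on the skeleton and then lift it triangle by triangle, using that the single-valued trace data on shared edges makes the piecewise lift globally $H^1$. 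Your route is equally valid but requires the extra step of invoking the bounded right inverse of the trace on each $T$ and arguing that the element-wise lifts glue; the paper's domain decomposition avoids any lifting by never leaving $H^1(\Omega;\Rt)$. The obstacle you flag — compatibility at shared vertices — is indeed the crux, and you correctly observe that the vertex-vanishing property resolves it. Two small wording points: an edge has endpoints, not interior vertices, so you mean an endpoint $x_0$ of $e$ that is an interior vertex of the mesh; and for strictness it already suffices that $a$ be non-orthogonal (or non-parallel) to $\bn_e$ rather than both, since one nonzero edge component outside $\wtilde H^{1/2}(e)$ is enough.
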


\begin{proof}
We start by proving \eqref{dense}. The sum is obviously direct for geometric reasons.
If there is no interior edge, $\cE(\Omega)=\emptyset$, then
there is nothing to show since $\mesh=\{\Omega\}$ and
$H^{1/2}_0(\cS;\Rt)=H^{1/2}_0(\Gamma;\Rt)=\{0\}$ in that case.
Let $\cE(\Omega)\not=\emptyset$.
The inclusion
$\wtilde H^{1/2}_{0,n}(\cS;\Rt)\oplus\wtilde H^{1/2}_{0,t}(\cS;\Rt)\subset H^{1/2}_0(\cS;\Rt)$
holds by definition of the spaces.
To show density we use the density in $H^1_0(\Omega;\Rt)$ of the $C^\infty_0(\Omega;\Rt)$
functions that vanish in a neighborhood of every interior vertex of $\mesh$, 
cf.~\cite[Lemma~17.3]{Tartar_07_ISS}, and the corresponding density of traces in
$H^{1/2}_0(\cS;\Rt)$. Given such a function $\phi\in C^\infty_0(\Omega;\Rt)$ (vanishing in
a neighborhood of interior vertices), a partition of unity allows to find a representation
$\phi=\sum_{e\in\cE(\Omega)}\phi_e$ such that $\phi_e\in C^\infty_0(\omega_e)$.
Here, $\omega_e$ is the patch of the two elements that have edge $e$ in common,
$\omega_e=\mathrm{int}(\overline{\el}_1\cup\overline{\el}_2)$, 
$\el_1,\el_2\in\mesh$, $\el_1\not=\el_2$, $e\in\cE(\el_1)\cap\cE(\el_2)$.
Denoting by $\bn_e$ and $\bt_e$ the normal and tangential vectors on $e$
(in a certain orientation), we split
$\phi_e=\phi_{e,n}+\phi_{e,t}:=
(\phi_e\cdot\bn_e)\bn_e+(\phi_e\cdot\bt_e)\bt_e$ ($e\in\cE(\Omega)$).
One observes that
\[
   \phi_n:=\sum_{e\in\cE(\Omega)}\phi_{e,n}\in H^1_n(\Omega,\cS;\Rt)\cap H^1_0(\Omega;\Rt),\quad
   \phi_t:=\sum_{e\in\cE(\Omega)}\phi_{e,t}\in H^1_t(\Omega,\cS;\Rt)\cap H^1_0(\Omega;\Rt),
\]
and $\phi=\phi_n+\phi_t$.
Therefore, $\varphi_n:=\gamma(\phi_n)$ and $\varphi_t:=\gamma(\phi_t)$
satisfy $\varphi_n\in \wtilde H^{1/2}_{0,n}(\cS;\Rt)$,
$\varphi_t\in \wtilde H^{1/2}_{0,t}(\cS;\Rt)$, and $\varphi_n+\varphi_t=\gamma(\phi)$.
This proves the density of relation \eqref{dense}.
The strictness can be reduced to the fact that non-zero constants are not elements of
$\wtilde H^{1/2}(e)$, cf.~\cite[Theorem~1.5.2.8]{Grisvard_85_EPN}, but constant vector functions
are certainly $H^1$ regular and have constant traces.

Direction ``$\Rightarrow$'' in \eqref{div_cont} holds because $\dual{\gamma(v)}{\tau}_\cS=0$
is true for any $v\in H^1_0(\Omega;\Rt)$ by Green's formula, and since
$\wtilde H^{1/2}_{0,n}(\cS;\Rt)\oplus\wtilde H^{1/2}_{0,t}(\cS;\Rt)\subset H^{1/2}_0(\cS;\Rt)$.
The other direction ``$\Leftarrow$'' in \eqref{div_cont} is due to the density of
relation \eqref{dense} and integration by parts.
More precisely, given $\tau\in H(\div,\mesh;\SS)$ that satisfies
the relation on the right-hand side of \eqref{div_cont}, density of \eqref{dense}
implies that $\dual{\varphi}{\tau}_\cS=0$ for any $\varphi\in H^{1/2}_0(\cS;\Rt)$.
This provides $\vdual{v}{\div\tau}_\mesh=-\vdual{\strain{v}}{\tau}$
for any smooth vector function $v$ with compact support in $\Omega$ and so
$\div\tau\in L_2(\Omega;\Rt)$.
\end{proof}

The next corollary follows by definition of $H_{nn}(\div,\mesh;\SS)$.

\begin{cor} \label{cor_cont}
Any $\tau\in H_{nn}(\div,\mesh;\SS)$ satisfies
\[
   \tau\in H(\div,\Omega;\SS)
   \quad\Leftrightarrow\quad
   \dual{\varphi}{\tau}_\cS = 0\quad
   \forall \varphi\in \wtilde H^{1/2}_{0,t}(\cS;\Rt).
\]
\end{cor}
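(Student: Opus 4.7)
The plan is to observe that this corollary is essentially a direct consequence of the definition of $H_{nn}(\div,\mesh;\SS)$ combined with the characterization \eqref{div_cont} from Proposition~\ref{prop_cont}. The two ingredients I need are: (i) membership in $H_{nn}(\div,\mesh;\SS)$ already enforces vanishing of $\dual{\varphi}{\tau}_\cS$ on the normal component $\wtilde H^{1/2}_{0,n}(\cS;\Rt)$, and (ii) global $\div$-conformity is equivalent to vanishing of $\dual{\varphi}{\tau}_\cS$ on the \emph{full} direct sum $\wtilde H^{1/2}_{0,n}(\cS;\Rt)\oplus\wtilde H^{1/2}_{0,t}(\cS;\Rt)$ by \eqref{div_cont}.

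For the direction ``$\Rightarrow$'': Assume $\tau\in H_{nn}(\div,\mesh;\SS)$ also belongs to $H(\div,\Omega;\SS)$. Then \eqref{div_cont} gives $\dual{\varphi}{\tau}_\cS=0$ for every $\varphi\in \wtilde H^{1/2}_{0,n}(\cS;\Rt)\oplus\wtilde H^{1/2}_{0,t}(\cS;\Rt)$, in particular for every $\varphi\in \wtilde H^{1/2}_{0,t}(\cS;\Rt)$.

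For the direction ``$\Leftarrow$'': Assume $\tau\in H_{nn}(\div,\mesh;\SS)$ and $\dual{\varphi}{\tau}_\cS=0$ for every $\varphi\in \wtilde H^{1/2}_{0,t}(\cS;\Rt)$. By the definition of $H_{nn}(\div,\mesh;\SS)$ the same relation holds for every $\varphi\in\wtilde H^{1/2}_{0,n}(\cS;\Rt)$. An arbitrary element of the direct sum decomposes uniquely (since the sum is direct, as noted in the proof of Proposition~\ref{prop_cont}) as $\varphi=\varphi_n+\varphi_t$ with $\varphi_n\in\wtilde H^{1/2}_{0,n}(\cS;\Rt)$, $\varphi_t\in\wtilde H^{1/2}_{0,t}(\cS;\Rt)$, and linearity of the duality pairing gives $\dual{\varphi}{\tau}_\cS=0$ for every $\varphi\in \wtilde H^{1/2}_{0,n}(\cS;\Rt)\oplus\wtilde H^{1/2}_{0,t}(\cS;\Rt)$. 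Invoking \eqref{div_cont} yields $\tau\in H(\div,\Omega;\SS)$.

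There is no real obstacle: all the analytical work, in particular the density statement \eqref{dense} and the integration-by-parts argument needed to recover $\div\tau\in L_2(\Omega;\Rt)$, has already been carried out in Proposition~\ref{prop_cont}. The corollary only isolates the tangential half of the condition by absorbing the normal half into the standing hypothesis $\tau\in H_{nn}(\div,\mesh;\SS)$, so the proof is a two-line bookkeeping argument.
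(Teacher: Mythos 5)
Your argument is correct and is exactly what the paper has in mind: the paper dispatches the corollary with the single remark that it ``follows by definition of $H_{nn}(\div,\mesh;\SS)$,'' i.e., the normal half of the test space in \eqref{div_cont} is already annihilated by the hypothesis $\tau\in H_{nn}(\div,\mesh;\SS)$, leaving only the tangential half. Your write-up merely spells out the directness of the sum and the linearity step that the paper leaves implicit.
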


Even though trace space $\wtilde H^{1/2}_{0,t}(\cS;\Rt)$ is sufficient to control the jumps
of $\tau\in H_{nn}(\div,\mesh;\SS)$ (as seen in the corollary), it is
not large enough to represent elements of $H^{1/2}(\cS;\Rt)$ that are dual to
(traces of) $H_{nn}(\div,\mesh;\SS)$ elements. We therefore need another tangential trace space.

\subsection{Duality trace operator and space} \label{sec_trace_nn}

The subsequent trace operator is at the heart of our mixed formulation with
normal-normal continuous stresses. It is the restriction of the canonical trace operator
$\gamma$, defined by
\begin{align*}
   \trnn:\;&\left\{\begin{array}{cll}
               H^1(\Omega;\Rt) &\rightarrow& H_{nn}(\div,\mesh;\SS)^*,\\
               v &\mapsto& \dual{\trnn(v)}{\tau}_\cS
               := \vdual{\strain{v}}{\tau} + \vdual{v}{\div\tau}_\mesh,
            \end{array}\right.
\end{align*}
and gives rise to the trace spaces
\begin{align*}
   H^{1/2}_t(\cS;\Rt) &:= \trnn\bigl(H^1(\Omega;\Rt)\bigr),\quad\\
   H^{1/2}_{0,t}(\cS;\Rt) &:= \trnn\bigl(H^1_0(\Omega;\Rt)\bigr) \subset
   H^{1/2}_{D,t}(\cS;\Rt) := \trnn\bigl(H^1_D(\Omega;\Rt)\bigr)
\end{align*}
with the space of displacements with homogeneous Dirichlet conditions
\[
   H^1_D(\Omega;\Rt):=\{v\in H^1(\Omega;\Rt);\;
   v|_\G{hc}=0,\ v\cdot\bn|_\G{sc}=0,\ v\cdot\bt|_\G{ss}=0\}.
\]
We also need the space of stress tensors with homogeneous Neumann conditions,
\[
   H_N(\div,\Omega;\SS) := \{\tau\in H(\div,\Omega;\SS);\;
   (\bt\cdot\tau\bn)|_\G{sc}=0,\ (\bn\cdot\tau\bn)|_\G{ss}=0,\ \tau\bn|_\G{sf}=0\}.
\]
Here, the Neumann traces are restrictions of the standard normal-trace operator
$H(\div,\Omega;\SS)\ni\tau\mapsto \tau\bn|_\G{}\in H^{-1/2}(\G{};\Rt)$.

\begin{remark} \label{rem_trace}
By definition, $\dual{\trnn(v)}{\tau}_\cS=\dual{\gamma(v)}{\tau}_\cS$
for $v\in H^1(\Omega;\Rt)$ and $\tau\in H_{nn}(\div,\mesh;\SS)$.
Specifically, considering the continuity of normal-normal traces of
$\tau\in H_{nn}(\div,\mesh;\SS)$ across edges,
$\trnn(v)$ amounts to tangential traces on interior edges whereas,
on edges $e\subset\Gamma$, it corresponds to the standard vector-valued trace $v|_e$.
On the other hand, $\gamma(v)$ is the standard vector-valued trace of $v$ on the skeleton
and reduces to $\trnn(v)$ when tested with $\tau\in H_{nn}(\div,\mesh;\SS)$.
\end{remark}

We note the following relation.

\begin{lemma} \label{la_cont}
Any $\tau\in H_{nn}(\div,\mesh;\SS)$ satisfies
\[
   \tau\in H_N(\div,\Omega;\SS)
   \quad\Leftrightarrow\quad
   \dual{\rho}{\tau}_\cS = 0\quad
   \forall \rho\in H^{1/2}_{D,t}(\cS;\Rt).
\]
\end{lemma}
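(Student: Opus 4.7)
I would split the argument by direction.

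\emph{Direction $(\Rightarrow)$.} Assume $\tau\in H_N(\div,\Omega;\SS)$ and pick $\rho=\trnn(v)\in H^{1/2}_{D,t}(\cS;\Rt)$ with representative $v\in H^1_D(\Omega;\Rt)$. Since $\tau\in H(\div,\Omega;\SS)$, the global Green's formula collapses the skeleton duality to a boundary duality,
\[
   \dual{\rho}{\tau}_\cS = \vdual{\strain{v}}{\tau}+\vdual{v}{\div\tau}_\Omega = \dual{v|_\Gamma}{\tau\bn}_\Gamma.
\]
I would then split $\Gamma$ along \eqref{split} and, on each of $\G{sc}$ and $\G{ss}$, decompose the integrand into its normal and tangential components via $v=(v\cdot\bn)\bn+(v\cdot\bt)\bt$ and $\tau\bn=(\bn\cdot\tau\bn)\bn+(\bt\cdot\tau\bn)\bt$. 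The combined boundary conditions for $v\in H^1_D$ and $\tau\in H_N$ make every one of the six resulting terms vanish (the $\G{hc}$ contribution by $v|_\G{hc}=0$ and the $\G{sf}$ contribution by $\tau\bn|_\G{sf}=0$).

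\emph{Direction $(\Leftarrow)$.} I would proceed in two stages. \emph{Stage 1: promote $\tau$ to global $\div$-conformity.} Since $H^1_0(\Omega;\Rt)\subset H^1_D(\Omega;\Rt)$, the hypothesis implies $\dual{\trnn(v)}{\tau}_\cS=0$ for every $v\in H^1_0(\Omega;\Rt)$. Remark~\ref{rem_trace} identifies this with $\dual{\gamma(v)}{\tau}_\cS$ because $\tau\in H_{nn}(\div,\mesh;\SS)$. Restricting to $v\in H^1_t(\Omega,\cS;\Rt)\cap H^1_0(\Omega;\Rt)$ yields $\dual{\varphi}{\tau}_\cS=0$ for every $\varphi\in\wtilde H^{1/2}_{0,t}(\cS;\Rt)$, and Corollary~\ref{cor_cont} then delivers $\tau\in H(\div,\Omega;\SS)$.

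\emph{Stage 2: recover the boundary conditions.} With $\tau\in H(\div,\Omega;\SS)$ at hand, Green's formula rewrites the hypothesis as $\dual{v|_\Gamma}{\tau\bn}_\Gamma=0$ for all $v\in H^1_D(\Omega;\Rt)$. I would then probe each boundary piece $\G{i}$ separately by choosing smooth compactly supported test functions whose traces are supported in a single $\G{i}$; this is possible because the pieces are relatively open, connected, of positive measure and pairwise non-intersecting by \eqref{split}. Varying $v|_\G{sf}$ freely gives $\tau\bn|_\G{sf}=0$; choosing $v$ on $\G{sc}$ with $v\cdot\bn|_\G{sc}=0$ and arbitrary tangential component yields $\bt\cdot\tau\bn|_\G{sc}=0$; and the analogous choice on $\G{ss}$ gives $\bn\cdot\tau\bn|_\G{ss}=0$. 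Together this is precisely $\tau\in H_N(\div,\Omega;\SS)$.

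\emph{Anticipated obstacle.} The only delicate point is Stage~1, where the two trace operators $\gamma$ and $\trnn$ must be handled carefully: one uses that $\dual{\gamma(v)}{\tau}_\cS=\dual{\trnn(v)}{\tau}_\cS$ exactly on $H_{nn}(\div,\mesh;\SS)$, and that the $H^1$-side of $\wtilde H^{1/2}_{0,t}(\cS;\Rt)$ sits inside $H^1_0(\Omega;\Rt)\subset H^1_D(\Omega;\Rt)$. This is where the normal-normal hypothesis on $\tau$ is actively used. The boundary-piece separation in Stage~2 is routine thanks to the non-intersecting, relatively open decomposition of $\Gamma$, and no regularity beyond $H^{1/2}$ trace theory is required.
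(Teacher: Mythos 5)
Your proof is correct and follows essentially the same route as the paper's: direction $(\Rightarrow)$ via global Green's identity and splitting boundary terms by type; direction $(\Leftarrow)$ in the same two stages, first promoting $\tau$ to $H(\div,\Omega;\SS)$ by restricting to the $\wtilde H^{1/2}_{0,t}(\cS;\Rt)$ sub-class and invoking Corollary~\ref{cor_cont} (together with Remark~\ref{rem_trace} to reconcile the two trace operators on $H_{nn}(\div,\mesh;\SS)$), then recovering the Neumann boundary conditions by localizing test functions to each boundary piece. The only cosmetic difference is that the paper compresses direction $(\Rightarrow)$ to a one-line reference while you spell out the boundary decomposition; the substance is identical.
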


\begin{proof}
Direction ``$\Rightarrow$'' is straightforward as noted in the proof of 
Proposition~\ref{prop_cont} and by using Green's identity.
The converse direction ``$\Leftarrow$'' can be shown in two steps.
Let a tensor $\tau\in H_{nn}(\div,\mesh;\SS)$ be given with
$\dual{\rho}{\tau}_\cS = 0$ for any $\rho\in H^{1/2}_{D,t}(\cS;\Rt)$.
The regularity $\tau\in H(\div,\Omega;\SS)$ can be seen
directly in a distributional way. It also follows from Corollary~\ref{cor_cont} since
$\wtilde H^{1/2}_{0,t}(\cS;\Rt)\subset H^{1/2}_{0,t}(\cS;\Rt)\subset H^{1/2}_{D,t}(\cS;\Rt)$.
In fact, by definition,
any $\rho\in\wtilde H^{1/2}_{0,t}(\cS;\Rt)$ is the trace
$\gamma(v)$ of some function $v\in H^1_{0,t}(\Omega,\cS;\Rt)$.
In particular, $v\in H^1_0(\Omega;\Rt)$, and $\gamma(v)=\rho$ implies
$\trnn(v)=\rho$, that is, $\rho\in H^{1/2}_{0,t}(\cS;\Rt)$.
To see that $\tau$ has zero Neumann traces we note that any $v\in H^1_D(\Omega;\Rt)$ satisfies
\[
   \dual{\trnn(v)}{\tau}_\cS
   = \vdual{\strain{v}}{\tau}+\vdual{v}{\div\tau}
   = \dual{v}{\tau\bn}_\G{}.
\]
By appropriate selections of $v$ (with zero trace on $\G{}$ except
for $(v\cdot\bt)|_\G{sc}$, $(v\cdot\bn)|_\G{ss}$, and $v|_\G{sf}$, separately)
it is clear that the traces
$(\bt\cdot\tau\bn)|_\G{sc}$, $(\bn\cdot\tau\bn)|_\G{ss}$, and $\tau\bn|_\G{sf}$ vanish.
This finishes the proof.
\end{proof}

Let us introduce norms.
For $v\in H^1(\Omega;\Rt)$ we use
$\|v\|_1:=\Bigl(\|v\|^2 + \|\strain{v}\|^2\Bigr)^{1/2}$ and, for $\tau\in H(\div,\Omega;\SS)$,
$\|\tau\|_{\div}:=\Bigl(\|\tau\|^2 + \|\div\tau\|^2\Bigr)^{1/2}$.
For an element $\tau$ of the product space $H(\div,\mesh;\SS)$
the corresponding product norm is denoted as $\|\tau\|_{\div,\mesh}$.
Throughout, $\|\cdot\|_\mesh$ is the generic $L_2(\mesh)$ product norm.
For traces $\rho\in H^{1/2}_{t}(\cS;\Rt)$ we use the minimal extension norm
\[
   \|\rho\|_{1/2,t,\cS} :=
   \mathrm{inf}
   \{\|v\|_1;\; v\in H^1(\Omega;\Rt),\ \trnn(v)=\rho\}.
\]
It turns out that this canonical trace norm has a duality representation.

\begin{prop} \label{prop_norm}
Any $\rho\in H^{1/2}_{t}(\cS;\Rt)$ satisfies
\[
   \|\rho\|_{1/2,t,\cS} =
   \sup_{0\not=\tau\in H_{nn}(\div,\mesh;\SS)}
   \frac {\dual{\rho}{\tau}_\cS}{\|\tau\|_{\div,\mesh}}.
\]
In the case of $\G{ss}\cup\G{sf}=\emptyset$ and $\rho\in H^{1/2}_{D,t}(\cS;\Rt)$,
the same relation holds with supremum taken for $\tau\in H_{nn}(\div,\mesh;\SS)\setminus\{0\}$
with $\vdual{\tr(\tau)}{1}=0$.
\end{prop}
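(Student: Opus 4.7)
The plan is to prove the two inequalities separately.

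For the ``$\geq$'' direction, fix any lifting $v \in H^1(\Omega;\Rt)$ with $\trnn(v) = \rho$ and any $\tau \in H_{nn}(\div,\mesh;\SS)$. The defining identity together with Cauchy--Schwarz yields
$$\dual{\rho}{\tau}_\cS = \vdual{\strain{v}}{\tau} + \vdual{v}{\div\tau}_\mesh \le \|v\|_1\,\|\tau\|_{\div,\mesh}.$$
Taking the infimum over $v$ and then the supremum over $\tau$ gives $\sup_\tau \dual{\rho}{\tau}_\cS/\|\tau\|_{\div,\mesh}\le\|\rho\|_{1/2,t,\cS}$.

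For the ``$\leq$'' direction, denote the supremum by $M$ and construct an $H^1$-lifting attaining this value by duality. The map $T\tau := (\tau,\div\tau)$ is a linear isometry from $H_{nn}(\div,\mesh;\SS)$ onto the subspace $X := \{(\tau,\div\tau):\tau\in H_{nn}(\div,\mesh;\SS)\}$ of the Hilbert space $L_2(\Omega;\SS)\times L_2(\mesh;\Rt)$, so $T\tau \mapsto \dual{\rho}{\tau}_\cS$ is a bounded linear functional on $X$ of norm exactly $M$. Hahn--Banach extension followed by Riesz representation produces $(w_0,v_0) \in L_2(\Omega;\SS)\times L_2(\Omega;\Rt)$ with $\|w_0\|^2+\|v_0\|^2 = M^2$ and
$$\dual{\rho}{\tau}_\cS = \vdual{w_0}{\tau}+\vdual{v_0}{\div\tau}_\mesh \qquad\forall\, \tau \in H_{nn}(\div,\mesh;\SS).$$
Testing against $\tau \in C^\infty_0(\Omega;\SS) \subset H_{nn}(\div,\mesh;\SS)$ makes the left-hand side vanish (Green's formula applied to any lifting kills the boundary contribution and uses $\tau^\T=\tau$), yielding the distributional identity $\strain{v_0} = w_0 \in L_2(\Omega;\SS)$. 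A Korn/Ne\v{c}as-type regularity result then upgrades $v_0 \in L_2$ to $v_0 \in H^1(\Omega;\Rt)$ with $\strain{v_0}=w_0$. Plugging back, the identity above rewrites as $\dual{\trnn(v_0)}{\tau}_\cS = \dual{\rho}{\tau}_\cS$ for all $\tau\in H_{nn}(\div,\mesh;\SS)$, so $\trnn(v_0) = \rho$, while $\|v_0\|_1^2 = \|v_0\|^2+\|w_0\|^2 = M^2$. I expect the distributional-to-$H^1$ regularity step to be the most delicate point: it packages the cancellation on compactly supported symmetric test tensors with a Korn-type embedding.

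For the supplementary statement under $\G{ss}\cup\G{sf}=\emptyset$ and $\rho \in H^{1/2}_{D,t}(\cS;\Rt)$, I would show that restricting to $\tau$ with $\vdual{\tr\tau}{1}=0$ does not decrease the supremum. Given any $\tau \in H_{nn}(\div,\mesh;\SS)$, set $\tau_0 := \tau - c\,\id$ with $c := \vdual{\tr\tau}{1}/(2|\Omega|)$, so $\vdual{\tr\tau_0}{1}=0$. Any lifting $v \in H^1_D(\Omega;\Rt)$ of $\rho$ satisfies $v\cdot\bn=0$ on all of $\Gamma$ under the stated hypotheses, so
$$\dual{\rho}{c\,\id}_\cS = c\,\vdual{\strain{v}}{\id}+c\,\vdual{v}{\div\id}_\mesh = c\int_\Omega\div v = c\int_\Gamma v\cdot\bn = 0,$$
whence $\dual{\rho}{\tau_0}_\cS = \dual{\rho}{\tau}_\cS$. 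A direct computation yields $\|\tau_0\|_{\div,\mesh}^2 = \|\tau\|_{\div,\mesh}^2 - 2c^2|\Omega| \le \|\tau\|_{\div,\mesh}^2$. Thus the constrained supremum matches the unconstrained one, which equals $\|\rho\|_{1/2,t,\cS}$ by the first part.
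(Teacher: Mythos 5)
Your proof is correct, and it takes a genuinely different route from the paper's for the nontrivial inequality ``$\le$''. The paper constructs the extremizing pair $(\tau,v)$ explicitly by solving two adjoint (Riesz-representation) variational problems in $H_{nn}(\div,\mesh;\SS)$ and $H^1(\Omega;\Rt)$ and then verifies the compatibility $v=\div_\mesh\tau$, $\strain{v}=\tau$, following the DPG ``optimal test function'' template of \cite{CarstensenDG_16_BSF,FuehrerHN_19_UFK}. You instead isometrically embed $H_{nn}(\div,\mesh;\SS)$ into $L_2(\Omega;\SS)\times L_2(\mesh;\Rt)$, invoke Hahn--Banach and Riesz to obtain a representing pair $(w_0,v_0)$ of the right total norm, then test against $C^\infty_0(\Omega;\SS)\subset H_{nn}(\div,\mesh;\SS)$ to detect the distributional identity $\strain{v_0}=w_0\in L_2(\Omega;\SS)$, and finally appeal to the $L_2$-to-$H^1$ Korn/Ne\v cas regularity theorem. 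The two arguments are roughly equally deep: the paper's compatibility check ``$v=\div_\mesh\tau$, $\strain{v}=\tau$'' hides the same regularity upgrade you make explicit, and your phrasing makes it clear exactly which functional-analytic ingredient carries the weight. What the paper's version buys is an explicit, computable extremizer (useful downstream in the DPG framework), while yours is shorter and closer to a textbook Hahn--Banach argument. For the constrained-supremum supplement you also argue differently and somewhat more robustly: you show directly, for arbitrary $\tau$, that subtracting the trace-mean multiple of $\id$ leaves the pairing $\dual{\rho}{\cdot}_\cS$ unchanged (using $\int_\Gamma v\cdot\bn=0$ for liftings $v\in H^1_D(\Omega;\Rt)$) while decreasing $\|\tau\|_{\div,\mesh}$, so restricting the supremum costs nothing; the paper instead observes that the extremizer $\tau=\strain{v}$ produced by its variational construction already satisfies $\vdual{\tr\tau}{1}=0$. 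Both are fine; yours does not depend on having an attaining $\tau$ in hand.
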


\begin{proof}
Given $\rho\in H^{1/2}_{t}(\cS;\Rt)$ there is $v\in H^1(\Omega;\Rt)$ with
$\trnn(v)=\rho$ so that
\[
   \dual{\rho}{\tau}_\cS = \vdual{\strain{v}}{\tau}+\vdual{v}{\div\tau}_\mesh
   \le
   \|v\|_1 \|\tau\|_{\div,\mesh} \quad\forall\tau\in H_{nn}(\Omega,\mesh;\SS).
\]
This shows the inequality ``$\ge$''.
The inequality ``$\le$'' follows with arguments from the DPG analysis in
\cite[Theorem~2.3]{CarstensenDG_16_BSF}.
We use a variational approach as in \cite[Lemma 3.2]{FuehrerHN_19_UFK},
\cite[Proposition~3]{FuehrerHN_22_DMS}.
For given $\rho\in H^{1/2}_{t}(\cS;\Rt)$ we define $\tau\in H_{nn}(\div,\mesh;\SS)$ as the solution to
\begin{align} \label{def_tau}
   \vdual{\tau}{\dtau} + \vdual{\div\tau}{\div\dtau}_\mesh
   = \dual{\rho}{\dtau}_\cS\quad\forall \dtau\in H_{nn}(\div,\mesh;\SS).
\end{align}
Thereafter, let $v\in H^1(\Omega;\Rt)$ solve
\begin{align} \label{def_v}
   \vdual{v}{\dv} + \vdual{\strain{v}}{\strain{\dv}}
   = \dual{\trnn(\dv)}{\tau}_\cS\quad\forall \dv\in H^1(\Omega;\Rt).
\end{align}
One finds that $v=\div_\mesh\tau$ (the piecewise divergence of $\tau$) and $\strain{v}=\tau$
so that, by definition of $\trnn$ and relation \eqref{def_tau},
\[
   \dual{\trnn(v)}{\dtau}_\cS
   = \vdual{\strain{v}}{\dtau} + \vdual{v}{\div\dtau}_\mesh
   = \vdual{\tau}{\dtau} + \vdual{\div\tau}{\div\dtau}_\mesh
   = \dual{\rho}{\dtau}_\cS
\]
for any $\dtau\in H_{nn}(\div,\mesh;\SS)$.
In particular, $\trnn(v)=\rho$ and $\|\rho\|_{1/2,t,\cS}\le \|v\|_1$.
The test functions $\dtau=\tau$ in \eqref{def_tau} and $\dv=v$ in \eqref{def_v}
show that
\[
   \|\tau\|_{\div,\mesh}^2 = \dual{\rho}{\tau}_\cS = \|v\|_1^2.
\]
This concludes the proof of the first statement.

Let us consider the particular case of
$\G{ss}\cup\G{sf}=\emptyset$ and $\rho\in H^{1/2}_{D,t}(\cS;\Rt)$.
Then, any $w\in H^1(\Omega;\Rt)$ with $\trnn(w)=\rho$ satisfies $w\in H^1_D(\Omega;\Rt)$
so that
\[
   \vdual{\div w}{1} = \dual{w}{\bn}_\Gamma = \dual{w}{\bn}_\G{hc}+\dual{w\cdot\bn}{1}_\G{sc} = 0.
\]
In the construction above, $\rho=\trnn(v)$ and $\tau=\strain{v}$. Therefore,
$\tau$ satisfies
\[
   \vdual{\tr(\tau)}{1} = \vdual{\strain{v}}{\id} = \vdual{\div v}{1} = 0.
\]
This shows the second statement.
\end{proof}

To consider Dirichlet boundary conditions in our mixed formulation,
we need to restrict elements $\rho\in H^{1/2}_t(\cS;\Rt)$ to subsets of $\Gamma$.
This amounts to taking traces of an element $v\in H^1(\Omega;\Rt)$
with $\trnn(v)=\rho$, giving rise to bounded linear operators
for $\mathit{bc}\in\{\mathit{hc,sc,ss}\}$
\[
   H^{1/2}_t(\cS;\Rt)\ni \rho
   \mapsto v\in  H^1(\Omega;\Rt) \mapsto
   \rho|_\G{bc}:=v|_\G{bc}\in H^{1/2}(\G{bc};\Rt).
\]
Recall that $(\cdot)|_\G{bc}:\;H^1(\Omega;\Rt)\to H^{1/2}(\G{bc};\Rt)$
denotes the standard trace operator.

We will need the following generalized trace-dev-div lemma
from \cite{CarstensenH_FOT}, here formulated in two space dimensions.

\begin{lemma} \label{la_trdevdiv}
For $0\le s\le 1$ let $\Sigma\subset H^{s}(\Omega;\Rtt)$
be a closed subspace that does not contain $\id$.
Estimate
\[
   \ctddinv \|\tr(\tau)\|_s \le \|\dev\tau\|_s + \|\div\tau\|_{s-1}
   \quad\forall\tau\in \Sigma
\]
holds true with a constant $\ctdd>0$ that solely depends on $s$, $\Omega$,
and $\Sigma$.
\end{lemma}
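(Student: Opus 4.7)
The plan is to use a standard Peetre--Tartar compactness argument by contradiction. Suppose that the asserted inequality fails for every constant; then one can produce a sequence $(\tau_n)\subset\Sigma$ with $\|\tr(\tau_n)\|_s=1$ and $\|\dev\tau_n\|_s+\|\div\tau_n\|_{s-1}\to 0$ as $n\to\infty$. The decomposition $\tau_n=\dev\tau_n+\tfrac12\tr(\tau_n)\id$ together with the normalization immediately shows that $(\tau_n)$ is bounded in $H^s(\Omega;\Rtt)$, which is the compactness input we will use later.

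The key algebraic observation, valid in two dimensions, is the identity
\[
   \grad(\tr\tau_n) \;=\; 2\,\div\tau_n \;-\; 2\,\div(\dev\tau_n).
\]
Since $\div$ maps $H^s$ boundedly into $H^{s-1}$, this forces $\|\grad(\tr\tau_n)\|_{s-1}\to 0$. At this point I would invoke a fractional-order Ne\v cas/Lions lemma of the following form: if $f_n$ is bounded in $H^s(\Omega)$ and $\grad f_n\to 0$ in $H^{s-1}(\Omega;\Rt)$, then there exist constants $c_n\in\R$ with $f_n-c_n\to 0$ strongly in $H^s(\Omega)$. Applied to $f_n:=\tr\tau_n$, combined with the normalization $\|\tr\tau_n\|_s=1$, this forces $|c_n|$ to be bounded below by a positive constant depending only on $\Omega$ and $s$, so, after extracting a subsequence, $c_n\to c\neq 0$ and $\tr\tau_n\to c$ strongly in $H^s$.

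Putting the two strong convergences $\dev\tau_n\to 0$ and $\tr\tau_n\to c$ together through $\tau_n=\dev\tau_n+\tfrac12\tr(\tau_n)\id$ yields $\tau_n\to\tfrac{c}{2}\id$ in $H^s(\Omega;\Rtt)$. Since $\Sigma$ is closed in $H^s$ and each $\tau_n\in\Sigma$, the limit lies in $\Sigma$; because $c\neq 0$ this means $\id\in\Sigma$, contradicting the hypothesis. The main technical obstacle is the fractional Ne\v cas lemma in the precise form needed above: the endpoints $s=0$ (classical Ne\v cas) and $s=1$ (Poincar\'e--Wirtinger) are well known, while the intermediate range $0<s<1$ requires either a careful real interpolation argument between these two endpoint statements on the polygonal Lipschitz domain $\Omega$, or a direct proof via the Fourier/difference-quotient characterization of $H^s$ together with a localization and partition-of-unity argument to pass from a reference cube to $\Omega$.
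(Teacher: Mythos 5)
The paper does not prove Lemma~\ref{la_trdevdiv}: it is imported verbatim from the reference \cite{CarstensenH_FOT}, so there is no in-paper argument against which to check you. That said, your Peetre--Tartar contradiction argument is the natural and essentially only known skeleton for tr--dev--div inequalities of this type, the $2$D identity $\grad(\tr\tau)=2\div\tau-2\div(\dev\tau)$ is correct, and the conclusion ``$\dev\tau_n\to 0$ and $\tr\tau_n\to c\neq 0$ in $H^s$ forces $\frac{c}{2}\id\in\Sigma$'' is sound once the intermediate convergences are secured.

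The genuine gap is exactly where you flag it, but it is worse than your sketch suggests. The ``fractional Ne\v cas/Lions lemma'' you need is equivalent to the quotient estimate $\|f-\bar f\|_{s}\lesssim\|\grad f\|_{s-1}$ on $H^s(\Omega)/\R$, i.e.\ to $\grad$ having closed range and being bounded below from $H^s(\Omega)/\R$ into $H^{s-1}(\Omega;\Rt)$. You propose to obtain this for $0<s<1$ ``by a careful real interpolation argument between the two endpoint statements,'' but interpolation does not preserve lower bounds or closed range: boundedness of $\grad$ interpolates, but injectivity modulo constants with a uniform constant does not follow from the endpoint cases. (The standard counterexamples to ``interpolation of isomorphisms'' apply here; one would first have to identify the interpolation space of the two ranges with the range in $H^{s-1}$, which is the whole difficulty.) The correct route is the dual one: show that $\div:\,H^{1-s}_0(\Omega;\Rt)\to H^{s-1}(\Omega)\cap L^2_0(\Omega)$ is surjective with a bounded right inverse, uniformly for $0\le s\le 1$ on a Lipschitz domain. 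This is precisely what the regularized Bogovski\u\i{} operator of Costabel and McIntosh provides (Math.~Z.~\textbf{265}, 2010), or, alternatively, what one can extract from the inf--sup theory for the Stokes problem on the fractional scale. Once that input is cited, the remainder of your compactness argument closes cleanly and gives a constant depending only on $s$, $\Omega$ and $\Sigma$, as the lemma requires. So: the architecture is right, the $2$D algebra is right, but replace the hand-waved interpolation step by an explicit appeal to a Bogovski\u\i-type result on the $H^s$ scale.
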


\section{Mixed formulation} \label{sec_mixed}

We aim at solving model problem \eqref{PDE},
$\sigma=\CC\strain{u}$, $-\div\sigma=f$ in $\Omega$,
subject to boundary conditions \eqref{BC},
\begin{alignat*}{4}
   u|_\G{hc}&=\g{D},\quad
   &(u\cdot\bn)|_\G{sc}&=\g{D,n},
   &(\bt\cdot\sigma\bn)|_\G{sc}&=\g{N,t},\\
   & &(u\cdot\bt)|_\G{ss}&=\g{D,t},\quad
   &(\bn\cdot\sigma\bn)|_\G{ss}&=\g{N,n},\quad
   &(\sigma\bn)|_\G{sf}&=\g{N},
\end{alignat*}
with boundary decomposition \eqref{split},
\(
   \Gamma=\mathrm{closure}(\G{hc}\cup\G{sc}\cup\G{ss}\cup\G{sf}).
\)
It goes without saying that the restrictions above are taken to be $H^{1/2}$ respectively
$H^{-1/2}$ traces of displacements and stresses onto the corresponding boundary pieces.
Of course, if any of the boundary pieces
comprises more than one edge of the polygon $\Gamma$, then the trace
space is of piecewise $H^{1/2}$ or $H^{-1/2}$ type.

We have to assume that the boundary data are compatible so that there exist
functions $u\in H^1(\Omega;\Rt)$, $\sigma\in H(\div,\Omega;\SS)$ that satisfy \eqref{BC}.
Furthermore, we request that the boundary conditions fix rigid body motions such that
Korn's inequality provides a constant $C(\Omega,\G{D})>0$ with
\begin{align} \label{Korn}
   \|\strain{u}\| \ge C(\Omega,\G{D}) \|u\|\quad\forall u\in H^1_D(\Omega;\Rt).
\end{align}
Here, ``$\G{D}$'' indicates that the number $C(\Omega,\G{D})$ depends on
the Dirichlet boundary parts $\G{hc}$, $\G{sc}$, $\G{ss}$.
To simplify the handling of boundary conditions we employ corresponding extensions of the data.
Throughout the remainder of this paper, these extensions will be arbitrary but fixed functions
$u_D\in H^1(\Omega;\Rt)$ and $\sigma_N\in H(\div,\Omega;\SS)$ with
\begin{subequations} \label{ext}
\begin{alignat}{4}
   u_D|_\G{hc}&=\g{D},\quad
   &(u_D\cdot\bn)|_\G{sc}&=\g{D,n},
   &(\bt\cdot\sigma_N\bn)|_\G{sc}&=\g{N,t},\\
   & &(u_D\cdot\bt)|_\G{ss}&=\g{D,t},\quad
   &(\bn\cdot\sigma_N\bn)|_\G{ss}&=\g{N,n},\quad
   &(\sigma_N\bn)|_\G{sf}&=\g{N}.
\end{alignat}
\end{subequations}
The constant tensor 
\begin{align} \label{sigma0}
   \sigma_0
   := 
   \begin{cases}
      0 \in\SS\quad \text{if}\ \G{ss}\cup\G{sf}\not=\emptyset,\ \text{and else}\\
   \frac {\lambda+\mu}{|\Omega|}
   \Bigl(\dual{\g{D}\cdot\bn}{1}_\G{hc} + \dual{\g{D,n}}{1}_\G{sc}\Bigr)\id
   \end{cases}
\end{align}
will solely play a role when $\G{ss}\cup\G{sf}=\emptyset$.

\begin{remark} \label{rem_sigma0}
In the case of $\G{ss}\cup\G{sf}=\emptyset$,
\[
   \|\sigma_0\| =
   \frac {\lambda+\mu}{|\Omega|} |\dual{u_D\cdot\bn}{1}_\G{}|\|\id\|
   =
   (\lambda+\mu) \sqrt{\frac {2}{|\Omega|}} |\vdual{\div u_D}{1}|
\]
and in \eqref{trsigma0} below we will see that
$\vdual{\tr(\sigma_0)}{1}=\vdual{\tr(\sigma)}{1}$.
\end{remark}

As indicated before, let $\mesh$ be a shape-regular refinement of an initial
regular triangulation $\mesh_0$ of $\Omega$. We require that
$\mesh$ induces a boundary mesh that is compatible with boundary decomposition \eqref{split},
that is, each of the non-empty boundary pieces is covered by a family of (closures of)
entire edges of $\mesh$.

We write problem \eqref{model} in a mixed form with $\sigma\in H_{nn}(\div,\mesh;\SS)$
and $u\in L_2(\Omega;\Rt)$ as follows.
Applying compliance tensor $\AA$ to the constitutive equation,
testing with $\dsigma\in H_{nn}(\div,\mesh;\SS)$ and applying trace operator $\trnn$, we obtain
\[
   \vdual{\AA\sigma}{\dsigma} + \vdual{u}{\div\dsigma}_\mesh - \dual{\trnn(u)}{\dsigma}_\cS=0.
\]
Then, introducing the independent trace variable $\eta=\trnn(u)$,
and weakly writing the equilibrium equation with weakly imposed conformity by Lemma~\ref{la_cont},
we arrive at the following mixed scheme.

\emph{Find $\sigma\in H_{nn}(\div,\mesh;\SS)$, $u\in L_2(\Omega;\Rt)$, and
$\eta\in H^{1/2}_{t}(\cS;\Rt)$ such that}
\begin{subequations} \label{VF}
\begin{align} \label{VFbc}
   &\eta|_\G{hc}=\g{D},\ \eta\cdot\bn|_\G{sc}=\g{D,n},\ \eta\cdot\bt|_\G{ss}=\g{D,t}
\end{align}
\emph{and, for any $\dsigma\in H_{nn}(\div,\mesh;\SS)$, $\du\in L_2(\Omega;\Rt)$, and
$\deta\in H^{1/2}_{D,t}(\cS;\Rt)$,}
\begin{alignat}{3}
   &\vdual{\AA\sigma}{\dsigma} + \vdual{u}{\div\dsigma}_\mesh - \dual{\eta}{\dsigma}_\cS
   &&= 0,\label{VFa}\\
   &\vdual{\div\sigma}{\du}_\mesh - \dual{\deta}{\sigma}_\cS
   &&= -\vdual{f}{\du}-\dual{\deta}{\sigma_N}_\cS\label{VFb}.
\end{alignat}
\end{subequations}

\begin{remark} \label{rem_bc}
Note that the selection of $\deta\in H^{1/2}_{0,t}(\cS;\Rt)$ in \eqref{VFb} ensures that
$\sigma\in H(\div,\Omega;\SS)$, whereas the enlarged space
$H^{1/2}_{D,t}(\cS;\Rt)$ of test functions $\deta$ accounts for a weak form of
the Neumann boundary conditions,
\[
  \bt\cdot\sigma\bn|_\G{sc}=\bt\cdot\sigma_N\bn|_\G{sc}=\g{N,t},\quad
  \bn\cdot\sigma\bn|_\G{ss}=\bn\cdot\sigma_N\bn|_\G{ss}=\g{N,n},\quad
  \sigma\bn|_\G{sf}=\sigma_N\bn|_\G{sf}=\g{N},
\]
cf.~Lemma~\ref{la_cont}.
%
Alternatively, the Neumann boundary conditions can be treated like essential ones.
In that case, test space $H^{1/2}_{D,t}(\cS;\Rt)$ has to be replaced with $H^{1/2}_{0,t}(\cS;\Rt)$.
However, the explicit specification of Neumann conditions requires the definition of yet
another trace operator, acting on $H_{nn}(\div,\mesh;\SS)$.
For ease of presentation we proceed with the weak form.
\end{remark}

One of our main results is the well-posedness of the new weak formulation.

\begin{theorem} \label{thm_VF}
Let $f\in L_2(\Omega;\Rt)$ and boundary data as in \eqref{ext} be given.
Furthermore, let boundary decomposition \eqref{split} be so that Korn's inequality
\eqref{Korn} holds. Then problem \eqref{VF} is well posed.
Let $(\sigma,u,\eta)$ denote its solution and recall tensor $\sigma_0$ defined in \eqref{sigma0}.
The couple $(u,\sigma)\in H^1(\Omega)\times H(\div,\Omega;\SS)$ solves \eqref{model},
$\eta=\trnn(u)$, and
\begin{align*}
   \|\sigma-\sigma_0\|_{\div,\mesh}
   + \|u\| + \|\eta\|_{1/2,t,\cS} &\lesssim \|f\| + \|u_D\|_1 + \|\sigma_N\|_{\div}
\end{align*}
holds uniformly with respect to $\nu$ and $\mesh$.

Since extensions $u_D$, $\sigma_N$ are arbitrary,
their norms in the upper bound can be replaced with
the corresponding trace norms of the actual boundary data.
\end{theorem}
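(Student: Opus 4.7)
I treat \eqref{VF} as a Brezzi-type saddle point problem with stress space $X:=H_{nn}(\div,\mesh;\SS)$ and multiplier space $M:=L_2(\Omega;\Rt)\times H^{1/2}_{D,t}(\cS;\Rt)$, driven by $a(\sigma,\dsigma):=\vdual{\AA\sigma}{\dsigma}$ and $b(\sigma,(\du,\deta)):=\vdual{\div\sigma}{\du}_\mesh-\dual{\deta}{\sigma}_\cS$. Continuity of both forms is immediate from the definitions of $\|\cdot\|_{\div,\mesh}$ and $\|\cdot\|_{1/2,t,\cS}$. The inhomogeneous trace data \eqref{VFbc} are absorbed by shifting $\eta\to\eta-\trnn(u_D)$, which produces right-hand-side contributions bounded by $\|u_D\|_1$ and $\|\sigma_N\|_{\div}$. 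In the pure-essential case $\G{ss}\cup\G{sf}=\emptyset$ the stress is determined only up to a multiple of $\id$, so I fix that mode via the constant $\sigma_0$ from \eqref{sigma0}, work on the quotient $\{\tau\in X\colon\vdual{\tr\tau}{1}=0\}$, and invoke Remark~\ref{rem_sigma0} to match the true solution.

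\textbf{Kernel coercivity.} The kernel of $b$ inside $X$ consists of stresses with $\div\sigma=0$ and, by Lemma~\ref{la_cont}, $\sigma\in H_N(\div,\Omega;\SS)$, further restricted to mean-zero trace when $\G{ss}\cup\G{sf}=\emptyset$ so that $\id$ is excluded. A direct computation with $\AA=\CC^{-1}$ gives
\[
   a(\sigma,\sigma) = \frac{1}{2\mu}\|\dev\sigma\|^2 + \frac{1}{4(\mu+\lambda)}\|\tr\sigma\|^2,
\]
whose trace part degenerates as $\nu\to 1/2$. Here the trace--dev--div Lemma~\ref{la_trdevdiv} with $s=0$ is the decisive tool: applied to $\Sigma$ equal to this kernel it yields $\|\tr\sigma\|\lesssim\|\dev\sigma\|+\|\div\sigma\|_{-1}=\|\dev\sigma\|$ uniformly in $\nu$. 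Combining with $\|\sigma\|^2=\|\dev\sigma\|^2+\frac12\|\tr\sigma\|^2$, the boundedness of $\mu$ (between $E/3$ and $E/2$ for $\nu\in[0,1/2)$), and $\|\div\sigma\|_\mesh=0$ on the kernel, I obtain $\|\sigma\|_{\div,\mesh}^2\lesssim a(\sigma,\sigma)$ uniformly in $\nu$ and $\mesh$.

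\textbf{Inf-sup and identification.} For given $(\du,\deta)\in M$ I construct $\sigma\in X$ via Riesz representation in the graph inner product applied to the functional $\dsigma\mapsto\vdual{\du}{\div\dsigma}_\mesh-\dual{\deta}{\dsigma}_\cS$, so that $\|\sigma\|_{\div,\mesh}^2=b(\sigma,(\du,\deta))$. The DPG-type duality of Proposition~\ref{prop_norm} and \cite{CarstensenDG_16_BSF,FuehrerHN_19_UFK} then produces a companion $H^1$-Riesz representer $v\in H^1(\Omega;\Rt)$ whose Euler-Lagrange conditions identify $\sigma=\strain{v}$, $\div_\mesh\sigma=v-\du$, and $\trnn(v)=-\deta$. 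Consequently $\|\sigma\|_{\div,\mesh}\simeq\|v\|_1$; combined with the definition of $\|\deta\|_{1/2,t,\cS}$ and the $L_2$-identity $\du=v-\div_\mesh\sigma$ this yields the joint inf-sup $\|\sigma\|_{\div,\mesh}\gtrsim\|\du\|+\|\deta\|_{1/2,t,\cS}$ uniformly. Brezzi's theorem then delivers existence, uniqueness, and the announced a priori bound. To identify the solution with that of \eqref{model}, I test \eqref{VFa} with $\dsigma\in C^\infty_c(T;\SS)$ on a single element to obtain $\AA\sigma=\strain{u}$ elementwise; the conformity forced by the coupling $\eta=\trnn(u)$ and Lemma~\ref{la_cont} then lift $u$ to $H^1(\Omega;\Rt)$ and $\sigma$ to $H(\div,\Omega;\SS)$, while \eqref{VFb} recovers $-\div\sigma=f$ and the Neumann boundary data. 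The main obstacles I foresee are the $\nu$-uniformity of the kernel coercivity (handled by Lemma~\ref{la_trdevdiv}) and the joint inf-sup for both multiplier components (handled by the duality between the graph norm on $X$ and the $H^1$-norm on $H^1(\Omega;\Rt)$).
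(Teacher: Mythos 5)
Your kernel-coercivity argument and the identification of the solution with that of \eqref{model} are correct and essentially match the paper's proof: you identify the kernel as divergence-free elements of $H_N(\div,\Omega;\SS)$, invoke the compliance identity \eqref{coercive}, the pointwise orthogonality $\|\tau\|^2=\|\dev\tau\|^2+\tfrac12\|\tr\tau\|^2$, and Lemma~\ref{la_trdevdiv} with $s=0$ to control $\|\tr\tau\|$ uniformly in $\nu$; the case distinction through $\sigma_0$ is handled in the same spirit as the paper.

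The inf-sup argument, however, contains a genuine gap. You take $\sigma$ as the graph-norm Riesz representer of $\dsigma\mapsto\vdual{\du}{\div\dsigma}_\mesh-\dual{\deta}{\dsigma}_\cS$ and then assert that ``Euler--Lagrange conditions'' produce a global $v\in H^1(\Omega;\Rt)$ with $\sigma=\strain{v}$, $\div_\mesh\sigma=v-\du$, $\trnn(v)=-\deta$. Testing the Riesz equation with $\dsigma\in C^\infty_c(\el;\SS)$ only yields the \emph{elementwise} relation $\sigma|_\el=\strain{(\div_\mesh\sigma-\du)}|_\el$, so the candidate is $v:=\div_\mesh\sigma-\du$, which gives $\du=\div_\mesh\sigma-v$ (your $\du=v-\div_\mesh\sigma$ has the wrong sign) and, more importantly, this $v$ is at first only in $H^1(\mesh;\Rt)$: the global $H^1$-conformity and the trace relation $\trnn(v)=-\deta$ require a separate argument exploiting the full test space $H_{nn}(\div,\mesh;\SS)$ across edges, which you do not supply. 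The subsequent claim ``$\|\sigma\|_{\div,\mesh}\simeq\|v\|_1$'' is also false: from $\sigma=\strain{v}$ and $\div_\mesh\sigma=v+\du$ one has $\|\sigma\|_{\div,\mesh}^2=\|\strain{v}\|^2+\|v+\du\|^2$, which is not equivalent to $\|v\|_1^2$; only the one-sided bound $\|v\|_1\lesssim\|\strain{v}\|\le\|\sigma\|_{\div,\mesh}$ (by Korn, since $v\in H^1_D$) holds, and that is what one actually needs. The paper avoids all of this by \emph{not} attempting a single Riesz construction: it proves two separate inf-sup bounds --- \eqref{infsup_div} on the conforming subspace $H_N(\div,\Omega;\SS)$ via Korn, and \eqref{infsup_trace} via Proposition~\ref{prop_norm} --- and combines them through the splitting result \cite[Theorem~3.3]{CarstensenDG_16_BSF}, whose applicability rests on the crucial orthogonality $\dual{\deta}{\tau}_\cS=0$ for $\tau\in H_N(\div,\Omega;\SS)$ and $\deta\in H^{1/2}_{D,t}(\cS;\Rt)$ supplied by Lemma~\ref{la_cont}. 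Your proposal cites the relevant machinery but neither applies the splitting lemma nor supplies the missing details that a direct Riesz argument would require; as written it does not establish the joint inf-sup condition.
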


\begin{proof}
The Dirichlet boundary condition can be dealt with by using extension
$u_D\in H^1(\Omega;\Rt)$ to add $\dual{\trnn(u_D)}{\dsigma}_\cS$ on the right-hand side of
\eqref{VFa}, and replacing $\eta$ with $\wtilde\eta:=\eta-\trnn(u_D)\in H^{1/2}_{D,t}(\cS;\Rt)$.

We verify the conditions for mixed formulations.

{\bf Uniform boundedness.}
The uniform boundedness of the right-hand side functionals
$\dual{\trnn(u_D)}{\cdot}_\cS$, $\vdual{f}{\cdot}$, $\dual{\cdot}{\sigma_N}_\cS$
and the bilinear forms on the left-hand sides of \eqref{VFa} and \eqref{VFb}
follows from the underlying norms associated with the product and trace spaces.

{\bf Inf-sup condition.}
The bound
\begin{align} \label{infsup_div}
   \sup_{0\not=\tau\in H_N(\div,\Omega;\SS)}
   \frac {\vdual{\div\tau}{\du}}{\|\tau\|_{\div}}
   \gtrsim
   \|\du\|\quad\forall\du\in L_2(\Omega;\Rt)
\end{align}
holds by Korn's inequality \eqref{Korn}, and
\begin{align} \label{infsup_trace}
   \sup_{0\not=\tau\in H_{nn}(\div,\mesh;\SS)}
   \frac {\dual{\deta}{\tau}_\cS}{\|\tau\|_{\div,\mesh}}
   \ge
   \|\deta\|_{1/2,t,\cS}\quad\forall\deta\in H^{1/2}_{D,t}(\cS;\Rt)
\end{align}
holds due to Proposition~\ref{prop_norm}.
An application of \cite[Theorem~3.3]{CarstensenDG_16_BSF}
(with $Y:=H_{nn}(\div,\mesh;\SS)$, $Y_0:=H_N(\div,\Omega;\SS)$, $X_0:=L_2(\Omega;\Rt)$,
and $\widehat X:= H^{1/2}_{D,t}(\cS;\Rt)$)
shows that a combination of estimates \eqref{infsup_div}, \eqref{infsup_trace}
with Lemma~\ref{la_cont} implies the required inf-sup condition
\begin{align} \label{infsup}
   \sup_{0\not=\tau\in H_{nn}(\div,\mesh;\SS)}
   \frac {\vdual{\div\tau}{\du}_\mesh -\dual{\deta}{\tau}_\cS}{\|\tau\|_{\div,\mesh}}
   \gtrsim
   \|\du\| + \|\deta\|_{1/2,t,\cS}\quad
   \forall\du\in L_2(\Omega;\Rt),\ \deta\in H^{1/2}_{D,t}(\cS;\Rt).
\end{align}
In the case $\G{ss}\cup\G{sf}=\emptyset$, we will need this inf-sup property for the
constrained space
\begin{align} \label{Hconstrained}
   \wtilde H_{nn}(\div,\mesh;\SS) := \{
   \tau\in H_{nn}(\div,\mesh;\SS);\; \vdual{\tr(\tau)}{1}=0\}.
\end{align}
By Proposition~\ref{prop_norm}, the constrained form of inf-sup property
\eqref{infsup_trace} is valid in this case as well.
To verify the constrained version of \eqref{infsup_div}, let
$0\not=\tau\in H_N(\div,\Omega;\SS)$ be given. Since $\G{ss}\cup\G{sf}=\emptyset$,
the only boundary condition such a function $\tau$ must satisfy is $\bt\cdot\tau\bn=0$ on $\G{sc}$.
We introduce the trace projection $\tau_0$ of $\tau$ by
\[
   \tau_0:=\frac 1{2|\Omega|} \vdual{\tr(\tau)}{1}\id
\]
such that $\vdual{\tr(\tau-\tau_0)}{1}=0$.
By the orthogonality of $\bt$ and $\bn$, $\bt\cdot\tau_0\bn=0$ on $\G{sc}$.
It follows that $\wtilde\tau:=\tau-\tau_0\in H_N(\div,\Omega;\SS)$
and $\vdual{\tr(\wtilde\tau)}{1}=0$.
In particular, $\wtilde\tau\in \wtilde H_{nn}(\div,\mesh;\SS)$ and
$\|\tau\|^2=\|\wtilde\tau\|^2+\|\tau_0\|^2$.
In this way we obtain the constrained version of inf-sup property \eqref{infsup_div},
\[
   \sup_{0\not=\tau\in H_N(\div,\Omega;\SS)}
   \frac {\vdual{\div(\tau-\tau_0)}{\du}}{\|\tau-\tau_0\|_{\div}}
   \ge
   \sup_{0\not=\tau\in H_N(\div,\Omega;\SS)}
   \frac {\vdual{\div\tau}{\du}}{\|\tau\|_{\div}}
   \gtrsim
   \|\du\|\quad\forall\du\in L_2(\Omega;\Rt).
\]
We then proceed as before and use \cite[Theorem~3.3]{CarstensenDG_16_BSF}
to deduce the constrained version of inf-sup property \eqref{infsup}
where $H_{nn}(\div,\mesh;\SS)$ is replaced with $\wtilde H_{nn}(\div,\mesh;\SS)$.

{\bf Coercivity.}
For the well-posedness of \eqref{VF} it remains to verify the coercivity of
bilinear form $\vdual{\AA\cdot}{\cdot}$ on the kernel
\[
   K_0 :=
   \{\tau\in H_{nn}(\div,\mesh;\SS);\;
     \vdual{\div\tau}{\du}_\mesh=0\ \forall\du\in L_2(\Omega;\Rt),\;
     \dual{\deta}{\tau}_\cS=0\ \forall\deta\in H^{1/2}_{D,t}(\cS;\Rt)\}.
\]
It is clear that any $\tau\in K_0$ satisfies $\div_\mesh\tau=0$. By Lemma~\ref{la_cont}
we therefore conclude that
\begin{align*}
   K_0 = \{\tau\in H_N(\div,\Omega;\SS);\; \div\tau=0\}.
\end{align*}
Relation \cite[(9.1.8)]{BoffiBF_13_MFE},
\begin{align} \label{coercive}
   \vdual{\AA\tau}{\dtau}
   =
   \frac 1{2\mu} \vdual{\dev\tau}{\dev\dtau}
   +
   \frac 1{4(\lambda+\mu)} \vdual{\tr(\tau)}{\tr(\dtau)}
   \quad\forall \tau,\dtau\in L_2(\Omega;\SS),
\end{align}
shows that bilinear form $\vdual{\AA\cdot}{\cdot}$
is coercive on $K_0$, though the ellipticity constant may depend on $\lambda$.
It follows that \eqref{VF} has a unique solution
$(\sigma,u,\eta)$.

To show the {\bf uniform stability} (independent of $\nu$) of \eqref{VF}
we use \eqref{coercive} to deduce the well-known estimate
\begin{align} \label{coercive_A}
   \vdual{\AA\tau}{\tau}\ge \frac 1{2\mu} \|\dev\tau\|^2
   \quad\forall \tau\in L_2(\Omega;\SS),
\end{align}
and are left with controlling $\|\tr(\tau)\|$. To this end we distinguish between two cases.

{\bf 1. Case $\G{ss}\cup\G{sf}\not=\emptyset$.}
It is clear that $K_0$ is a closed subspace of $H(\div,\Omega;\Rtt)$.
Recall the general assumption that any part of the splitting \eqref{split} is either empty
or has a positive (surface) measure. Therefore, in the current case at least one of the
boundary components $\G{ss}$ or $\G{sf}$ has positive measure. Thus, a function
$v\in H^1_D(\Omega;\Rt)$ exists with $\dual{v\cdot\bn}{1}_{\G{ss}\cup\G{sf}}\not=0$.
Then,
\[
   \dual{v\cdot\bn}{\bn\cdot\id\bn}_\G{ss} + \dual{v}{\id\bn}_\G{sf}
   = \dual{v\cdot\bn}{1}_{\G{ss}\cup\G{sf}}\not=0
\]
proves that $\id\not\in K_0$.
Hence, Lemma~\ref{la_trdevdiv} is applicable with $s=0$, $\Sigma:=K_0$,
and relation \eqref{coercive_A} gives
\begin{align*}
   \vdual{\AA\tau}{\tau}
   \gtrsim \|\dev\tau\|^2 + \|\tr(\tau)\|^2
   \simeq \|\tau\|_{\div,\mesh}^2 \quad\forall\tau\in K_0,
\end{align*}
uniformly with respect to $\nu$ and $\mesh$.
We conclude that the solution of \eqref{VF} is uniformly bounded.

{\bf 2. Case $\G{ss}\cup\G{sf}=\emptyset$.} 
The selection of $\dsigma=\id$ in \eqref{VFa} shows that
\begin{align*}
   &0=\vdual{\AA\sigma}{\id} + \vdual{u}{\div\id}_\mesh - \dual{\eta}{\id}_\cS
   =
   \frac 1{2(\lambda+\mu)} \vdual{\tr(\sigma)}{1}
   - \dual{\g{D}\cdot\bn}{1}_\G{hc} - \dual{\g{D,n}}{1}_\G{sc}
\end{align*}
because
\begin{align} \label{dualid}
   \dual{\eta}{\id}_\cS = \dual{\eta}{\bn}_\Gamma
   = \dual{\eta}{\bn}_\G{hc} + \dual{\eta\cdot\bn}{1}_\G{sc}
   = \dual{\g{D}}{\bn}_\G{hc} + \dual{\g{D,n}}{1}_\G{sc}
\end{align}
by \eqref{VFbc}. Therefore,
\(
   \vdual{\tr(\sigma)}{1}
   =
   2(\lambda+\mu)\Bigl(\dual{\g{D}\cdot\bn}{1}_\G{hc} + \dual{\g{D,n}}{1}_\G{sc}\Bigr).
\)
We conclude that tensor $\sigma_0$ defined in \eqref{sigma0}
is a datum that provides the trace projection of $\sigma$,
\begin{align} \label{trsigma0}
   \sigma_0
   = 
   \frac {\lambda+\mu}{|\Omega|}
   \Bigl(\dual{\g{D}\cdot\bn}{1}_\G{hc} + \dual{\g{D,n}}{1}_\G{sc}\Bigr)\id
   =\frac 1{2|\Omega|}\vdual{\tr(\sigma)}{1}\id,
\end{align}
that is, $\vdual{\tr(\sigma_0)}{1}=\vdual{\tr(\sigma)}{1}$.
In what follows, we take into account that $\sigma_0$ may not vanish
but, if $\sigma_0=0\in\SS$, some of the calculations are trivial.

We rewrite system \eqref{VF} by replacing $\sigma$ with
$\wtilde\sigma:=\sigma-\sigma_0\in\wtilde H_{nn}(\div,\mesh;\SS)$ (cf.~\eqref{Hconstrained}),
giving the system
\begin{subequations} \label{VF2}
\begin{alignat}{3}
   &\vdual{\AA\wtilde\sigma}{\dsigma} + \vdual{u}{\div\dsigma}_\mesh
   - \dual{\wtilde\eta}{\dsigma}_\cS
   &&= \dual{\trnn(u_D)}{\dsigma}_\cS,\label{VFa2}\\
   &\vdual{\div\wtilde\sigma}{\du}_\mesh - \dual{\deta}{\wtilde\sigma}_\cS
   &&= -\vdual{f}{\du}-\dual{\deta}{\sigma_N}_\cS\label{VFb2}
\end{alignat}
\end{subequations}
for $\du\in L_2(\Omega;\Rt)$, $\deta\in H^{1/2}_{D,t}(\cS;\Rt)$, and
$\dsigma\in \wtilde H_{nn}(\div,\mesh;\SS)$.
Here, for $\sigma_0\not=0$, we used that
$\vdual{\sigma_0}{\dsigma}=0=\dual{\deta}{\sigma_0}_\cS$ for such test
functions. The first relation is due to the new constraint for $\dsigma$
and the second is due to the homogeneous Dirichlet traces of $\deta$, cf.~\eqref{dualid}.

Formulation \eqref{VF2} is equivalent to system \eqref{VF} with solution
$(\wtilde\sigma,u,\wtilde\eta)=(\sigma-\sigma_0,u,\eta-\trnn(u_D))$.
The kernel space becomes
\[
   \wtilde K_0 := \{\tau\in K_0;\; \vdual{\tr(\tau)}{1}=0\},
\]
and Lemma~\ref{la_trdevdiv} with $s=0$ and $\Sigma:=\wtilde K_0$ shows that
\[
   \|\tr(\tau)\| \le
   \ctdd\bigl( \|\dev\tau\| + \|\div\tau\|_{-1}\bigr)
   = \ctdd \|\dev\tau\|\quad\forall \tau\in \wtilde K_0.
\]
We then continue with \eqref{coercive_A} to bound
\begin{align*}
   \vdual{\AA\tau}{\tau}\gtrsim
   \|\dev\tau\|^2 + \|\tr(\tau)\|^2
   \simeq
   \|\tau\|_{\div,\mesh}^2\quad\forall\tau\in \wtilde K_0.
\end{align*}
This is the uniform coercivity of $\vdual{\AA\cdot}{\cdot}$ on $\wtilde K_0$,
and we conclude that the solution $(\wtilde\sigma,u,\wtilde\eta)$ of \eqref{VF2}
is uniformly bounded as
\begin{align} \label{b1}
   \|\wtilde\sigma\|_{\div,\mesh} + \|u\| + \|\wtilde\eta\|_{1/2,t,\cS}
   \lesssim \|f\| + \|u_D\|_1 + \|\sigma_N\|_{\div}.
\end{align}
The bound $\|\eta\|_{1/2,t,\cS}\le\|\wtilde\eta\|_{1/2,t,\cS}+\|u_D\|_1$
leads to the claimed estimate in the case of $\G{ss}\cup\G{sf}=\emptyset$.

We finish the proof of the theorem by noting that
the regularities $u\in H^1(\Omega;\Rt)$ and $\sigma\in H(\div,\Omega;\SS)$ are immediate,
as is the fact that they constitute a (weak) solution of \eqref{model}.
Relation $\eta=\trnn(u)$ then follows from \eqref{VFa} by definition of $\trnn$.
\end{proof}

\section{An $H_{nn}(\div,\mesh;\SS)$ conforming approximation space} \label{sec_approx}

\subsection{Polynomial spaces and transformations} \label{sec_pol}

We use notation $P^k(\omega;U)$ for the space of $U$-valued functions whose components
are polynomials of total degree at most $k$.
Here, $\omega\subset\Rt$ is a triangle or an interval (line segment), and
$U\in\{\R,\Rt,\SS\}$. Occasionally we drop the argument $U$ when $U=\R$, and will also drop $\omega$
when the domain of definition is clear.
We generically denote by $\Pi^k_\el$ the $L_2$ projection
onto $P^k(\el;U)$ ($k\in\{0,1\}$, $U\in\{\Rt,\SS\}$),
and use the corresponding piecewise projection $\Pi^k_\mesh$.
For integers $i,j\ge 0$, $x^iy^j$ is understood as the polynomial
$(x,y)\mapsto x^iy^j$ with generic domain as needed.
The reference triangle is $\elref$ with vertices
$\widehat x_1=(0,1)$, $\widehat x_2=(0,0)$, $\widehat x_3=(1,0)$ and
edges $\widehat e_j$ which are opposite to $\widehat x_j$ ($j=1,2,3$).
As introduced earlier, $\bn$ and $\bt$ is our generic notation for
normal and tangential vectors along edges of the mesh.

We need appropriate transformations of tensor and vector functions.
To distinguish between objects, functions, and differential
operators acting on or associated with $\elref$ or a triangle $\el\in\mesh$, we add the symbol
 ``\,$\widehat{\ }$\;'' to $\elref$-related objects. For instance,
$\strainref{\widehat v}$ is the strain tensor or symmetric gradient of
$\widehat v\in H^1(\elref;\Rt)$.
For $\el\in\mesh$ let $F_\el:\;\elref\to T$ denote the affine diffeomorphism
\begin{align*}
  \begin{pmatrix}\widehat x\\ \widehat y\end{pmatrix}
  \mapsto
  B_\el \begin{pmatrix}\widehat x\\ \widehat y\end{pmatrix} + a_\el,
\end{align*}
where $a_\el\in\Rt$, $B_\el\in\Rtt$, and set $J_\el:=\det(B_\el)\not=0$.
In the following, we only consider transformations which generate families of shape-regular
elements, $h_\el$ refers to the diameter of $\el$, and $h_\mesh\in L_\infty(\Omega)$ with
$h_\mesh|_\el:=h_\el$ ($\el\in\mesh$) is the mesh-width function.
We also assume that $|J_\el|$ is uniformly bounded so that the
$L_\infty(\Omega)$ norm $\|h_\mesh\|_{\infty}=O(1)$ uniformly for all meshes $\mesh$.

We transform tensors $\widehat\tau:\;\elref\to\SS$ onto $\el\in\mesh$
by the (re-scaled) contravariant Piola--Kirchhoff transformation,
\begin{align} \label{PiolaK}
  \PiolaK{\el}(\widehat\tau)=\tau\quad\text{with}\quad
  J_\el^2 \tau\circ F_\el := B_\el\widehat\tau B_\el^\T,
\end{align}
and also use the covariant form
\begin{align*}
  \PiolaKc{\el}(\widehat\tau)=\tau\quad\text{with}\quad
  \tau\circ F_\el := J_\el B_\el^{-\T}\widehat\tau B_\el^{-1}.
\end{align*}
Vector functions $\widehat v:\;\elref\to\Rt$ are transformed by the (re-scaled, covariant)
Piola transformation
\begin{align} \label{Piolacurl}
  &\PiolaCurl{\el}(\widehat v)=v\quad\text{with}\quad
  v\circ F_\el := J_\el B_\el^{-\T}\widehat v. 
\end{align}
Note that, as selected by Pechstein and Sch\"oberl \cite{PechsteinS_11_TDN}, there is a factor $J_\el^2$
rather than $J_\el$ in \eqref{PiolaK} as in \cite{FuehrerH_19_FDD}. This is due to the fact
that we are aiming at the pointwise continuity of the normal-normal traces of tensors rather than
the continuity in the sense of traces that are dual to traces of $H^2$ functions,
intrinsic to the Kirchhoff--Love plate bending model, or the bi-Laplacian.
There is a corresponding non-standard factor $J_\el$ in Piola transformation
\eqref{Piolacurl}. It makes it unsuitable for a curl-conforming mapping, not needed here.
There does not seem to be agreement on the use of the terms co- and contravariant,
cf.~\cite{PechsteinS_11_TDN,RognesKL_09_EAH}.

For convenient reading we collect some transformation and conservation properties
in the next lemma.

\begin{lemma}\label{la_trafo}
Let $\el\in\mesh$. For $\widehat\tau\in H(\div,\elref;\SS)$ and
$\widehat v\in H^1(\elref;\Rt)$ set
$\tau:=\PiolaK{\el}(\widehat\tau)$ and $v:=\PiolaCurl{\el}(\widehat v)$.
Relations
\begin{align} \label{trafo_div_strain}
  &J_\el^2 \div\tau \circ F_\el = B_\el \divref\widehat\tau,\quad
  \strain{v}\circ F_\el =  J_\el B_\el^{-\T} \strainref{\widehat v}B_\el^{-1}
\end{align}
and
\begin{align*}
   \vdual{\div\tau}{v}_\el = \vdual{\divref\widehat\tau}{\widehat v}_\elref,\quad
   \vdual{\tau}{\strain{v}}_\el = \vdual{\widehat\tau}{\strainref{\widehat v}}_\elref,\quad
   \dual{\tau\bn}{v}_{\partial\el} = \dual{\widehat\tau\widehat\bn}{\widehat v}_{\partial\elref}
\end{align*}
hold. Furthermore, equivalences
\begin{alignat*}{3}
  & \|\tau\|_\el \simeq h_\el^{-1} \|\widehat\tau\|_\elref,
  \quad &&
  \|\div\tau\|_\el \simeq h_\el^{-2} \|\divref\widehat\tau\|_\elref
  \quad &&
  \forall \widehat\tau\in H(\div,\elref;\SS),\\
  &\quad &&
  |\tau|_{s,\el} \simeq h_\el^{-1-s} |\widehat\tau|_{s,\elref}
  \quad &&
  \forall \widehat\tau\in H^s(\elref;\SS),\ 0<s\le 1,\\
  & \|v\|_\el \simeq h_\el^2 \|\widehat v\|_\elref,
  \quad &&
  \|\strain{v}\|_\el \simeq h_\el\|\strainref{\widehat v}\|_\elref
  \quad &&
  \forall \widehat v\in H^1(\elref;\Rt)
\end{alignat*}
hold with equivalence constants independent of $\widehat\tau$, $\widehat v$, and $\el\in\mesh$.
\end{lemma}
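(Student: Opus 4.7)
The proof naturally splits into three groups: the transformation identities, the three duality/integral equalities, and the scaling equivalences. The plan is to handle them in that order, since each subsequent group builds on the previous one.

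For the two transformation identities in \eqref{trafo_div_strain}, the strategy is direct computation via the chain rule. For $\strain{v}$, I would differentiate $v\circ F_\el = J_\el B_\el^{-\T}\widehat v$ with respect to $\widehat x$, use $\nabla_{\widehat x}(v\circ F_\el) = ((\grad v)\circ F_\el)B_\el$ to obtain $(\grad v)\circ F_\el = J_\el B_\el^{-\T}\gradref\widehat v\,B_\el^{-1}$, and take the symmetric part. For the divergence identity, write $\tau\circ F_\el = J_\el^{-2}B_\el\widehat\tau B_\el^\T$ componentwise, apply $\partial_k = \sum_m(B_\el^{-1})_{mk}\partial_{\widehat m}$, exploit that $B_\el$ and $J_\el$ are constant (affine map), and collect terms row-wise; the row structure of the divergence and the $B_\el$ factor on the left of $\widehat\tau$ combine to give $J_\el^2(\div\tau)\circ F_\el = B_\el\divref\widehat\tau$.

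For the three integral equalities, I would pull back each integral to $\elref$ using the change of variables $dx = |J_\el|\,d\widehat x$ (and $ds = |B_\el\widehat\bt|\,d\widehat s$ on edges). The volume identities then follow from algebraic cancellations: in $\vdual{\div\tau}{v}_\el$ the factors combine to $(B_\el\divref\widehat\tau)\cdot(B_\el^{-\T}\widehat v) = (\divref\widehat\tau)^\T B_\el^\T B_\el^{-\T}\widehat v = \divref\widehat\tau\cdot\widehat v$; in $\vdual{\tau}{\strain{v}}_\el$ one uses $\tr(B_\el\widehat\tau\strainref{\widehat v}B_\el^{-1}) = \tr(\widehat\tau\strainref{\widehat v})$ by cyclicity together with the symmetry of $\widehat\tau$. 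Rather than compute the boundary duality by hand (which would require tracking $|B_\el\widehat\bt|$ and $|B_\el^{-\T}\widehat\bn|$), I would deduce it from Green's identity applied on both $\el$ and $\elref$: subtracting the two already-proven volume identities from the respective Green formulas yields $\dual{\tau\bn}{v}_{\partial\el} = \dual{\widehat\tau\widehat\bn}{\widehat v}_{\partial\elref}$ for free.

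For the scaling equivalences, shape-regularity gives $|B_\el|\simeq h_\el$, $|B_\el^{-1}|\simeq h_\el^{-1}$, $|J_\el|\simeq h_\el^2$, uniformly. Substituting into $\tau\circ F_\el = J_\el^{-2}B_\el\widehat\tau B_\el^\T$ yields the pointwise estimate $|\tau\circ F_\el|\simeq h_\el^{-2}|\widehat\tau|$; integration together with $dx\simeq h_\el^2\,d\widehat x$ produces $\|\tau\|_\el\simeq h_\el^{-1}\|\widehat\tau\|_\elref$, and the same recipe applied to $\div\tau\circ F_\el = J_\el^{-2}B_\el\divref\widehat\tau$ gives $\|\div\tau\|_\el\simeq h_\el^{-2}\|\divref\widehat\tau\|_\elref$. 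The bounds for $v$ and $\strain{v}$ are entirely parallel. The only slightly nontrivial case is the fractional estimate $|\tau|_{s,\el}\simeq h_\el^{-1-s}|\widehat\tau|_{s,\elref}$ for $0<s<1$, which I would obtain either by plugging the transformation into the Slobodeckij double integral (introducing an extra factor $h_\el^{-s}$ in each variable and one factor $h_\el^2$ from each area element, producing the stated scaling after some algebra) or, more cleanly, by $K$-interpolation between $s=0$ and $s=1$; the integer case $s=1$ follows from the chain rule as in the $L_2$ case, with one additional $h_\el^{-1}$ from differentiation. This last fractional scaling is the only place that requires genuine care; the rest is bookkeeping.
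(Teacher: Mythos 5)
Your proof is correct and follows essentially the same structure as the paper's: establish the pointwise transformation formulas, deduce the volume integral identities, obtain the boundary duality from Green's formula on both $\el$ and $\elref$, and read off the scalings from $|B_\el|\simeq h_\el$, $|B_\el^{-1}|\simeq h_\el^{-1}$, $|J_\el|\simeq h_\el^2$. The one place where your route genuinely diverges is the derivation of $J_\el^2\,\div\tau\circ F_\el = B_\el\divref\widehat\tau$: you compute it directly by the chain rule and a tensor index contraction, whereas the paper avoids that bookkeeping by testing against $\varphi = \PiolaCurl{\el}(\widehat\varphi)$ with $\widehat\varphi\in C^\infty_0(\elref;\Rt)$, using integration by parts on $\el$ and $\elref$, and the already-known strain transformation to get $\vdual{\div\tau}{\varphi}_\el = \vdual{\divref\widehat\tau}{\widehat\varphi}_\elref$, from which the pointwise identity follows since the two Piola maps are dual. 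Both arguments are elementary and equivalent in length; the paper's has the small advantage of reusing the strain identity and dodging the row-structure index manipulation, while yours is more self-contained. Your treatment of the fractional seminorm (Slobodeckij double integral or $K$-interpolation) is a correct way to fill in what the paper dismisses as ``routine arguments.''
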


\begin{proof}
For $\el\in\mesh$ let $\widehat\tau\in H(\div,\elref;\SS)$, $\widehat v\in H^1(\elref;\Rt)$ and
$\widehat\varphi\in C^\infty_0(\elref;\Rt)$ be given with corresponding transformations
$\tau$, $v$ and $\varphi=\PiolaCurl{\el}(\widehat\varphi)$ onto $\el$.
The strain relationship in \eqref{trafo_div_strain} follows from vector calculus,
cf.~\cite[Section~3.1]{PechsteinS_11_TDN}.
Therefore, integration by parts shows that
\[
   \vdual{\div\tau}{\varphi}_\el = -\vdual{\tau}{\strain{\varphi}}_\el
   = - \vdual{B_\el\widehat\tau B_\el^\T}{B_\el^{-\T}\strainref{\widehat\varphi}B_\el^{-1}}_\elref
   = - \vdual{\widehat\tau}{\strainref{\widehat\varphi}}_\elref
   =   \vdual{\divref\widehat\tau}{\widehat\varphi}_\elref.
\]
That is, $\div\tau\circ F_\el=J_\el^{-2}B_\el\divref\widehat\tau$
since $\widehat\varphi=J_\el^{-1}B_\el^\T\varphi\circ F_\el$, cf.~\cite[Lemma~9]{FuehrerH_19_FDD}.
We have just seen that
\[
   \vdual{\div\tau}{v}_\el = \vdual{\divref\widehat\tau}{\widehat v}_\elref
   \quad\text{and}\quad
   \vdual{\tau}{\strain{v}}_\el = \vdual{\widehat\tau}{\strainref{\widehat v}}_\elref,
\]
implying
\[
     \dual{\tau\bn}{v}_{\partial\el}
   = \vdual{\div\tau}{v}_\el + \vdual{\tau}{\strain{v}}_\el
   = \vdual{\divref\widehat\tau}{\widehat v}_\elref
   + \vdual{\widehat\tau}{\strainref{\widehat v}}_\elref
   = \dual{\widehat\tau\widehat\bn}{\widehat v}_{\partial\elref}.
\]
The scaling properties follow by routine arguments.
\end{proof}

\subsection{An $H(\div;\SS)$ element} \label{sec_el}

For a triangle $\el\in\mesh$, we consider the following local subspace of $P^2(\el;\SS)$,
\begin{equation} \label{P21}
   \XX{\el}:=\{\tau\in P^2(\el;\SS);\; \bn\cdot\tau\bn|_e\in P^1(e)\ \forall e\in \cE(\el)\},
\end{equation}
and assign the degrees of freedom
\begin{subequations} \label{dof_el}
\begin{alignat}{3}
   \label{dof_el_nn}
   &|e| \dual{\bn\cdot\tau\bn}{q}_{e}\quad &&(q\in P^1(e),\ e\in\cE(\el)),\\
   \label{dof_el_id}
   &\vdual{\tau}{c}_{\el}        &&(c\in P^0(\el;\SS)),\\
   \label{dof_el_div}
   &\vdual{\div\tau}{v}_{\el}    &&(v\in P^1(\el;\Rt)).
\end{alignat}
\end{subequations}

\begin{remark} \label{rem_3d}
The construction of space \eqref{P21} with degrees of freedom \eqref{dof_el}
can be extended to tetrahedra. In that case, the dimension of the
space is larger than the number of degrees of freedom that correspond to \eqref{dof_el}.
The three-dimensional case thus requires a slightly different space definition.
Its analysis is more technical and left to future research.
\footnote{Note after publication: 3d case treated in \url{http://arxiv.org/abs/2502.00609}.}
\end{remark}

It is important to know how degrees of freedom \eqref{dof_el} transform when mapping tensors between
triangles. This is  established by the following lemma.

\begin{lemma} \label{la_dof}
Given $\el\in\mesh$ and a sufficiently smooth tensor $\tau\in L_2(\el;\SS)$,
let $\widehat\tau\in L_2(\elref;\SS)$ be defined by $\tau=\PiolaK{\el}\widehat\tau$.
Furthermore, let $e\in\cE(\el)$, $q\in P^2(e)$, $c\in P^0(\el;\SS)$ and
$v\in P^1(\el;\Rt)$ be given and define
$\widehat e\in\cE(\elref)$, $\widehat q\in P^2(\widehat e)$, $\widehat c\in P^0(\elref;\SS)$ and
$\widehat v\in P^1(\elref;\Rt)$ such that $e=F_\el\widehat e$, $q\circ F_\el=\widehat q$,
$c=\PiolaKc{\el}\widehat c$ and $v=\PiolaCurl{\el}\widehat v$.
Then,
\begin{align*}
   &|e|\dual{\bn\cdot\tau\bn}{q}_{e}
   = |\widehat e|\dual{\widehat\bn\cdot\widehat\tau\widehat\bn}{\widehat q}_{\widehat e},\quad
   \vdual{\tau}{c}_{\el}=\vdual{\widehat\tau}{\widehat c}_\elref,\quad
   \vdual{\div\tau}{v}_{\el}=\vdual{\divref\widehat\tau}{\widehat v}_\elref.
\end{align*}
(Recall that $\bn$, resp. $\widehat\bn$, is the normal vector on $e$, resp. $\widehat e$.)
\end{lemma}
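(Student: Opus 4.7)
The three identities to be verified are independent of one another, so I would treat them separately. The third identity, $\vdual{\div\tau}{v}_{\el}=\vdual{\divref\widehat\tau}{\widehat v}_\elref$, is the fastest: since $\tau=\PiolaK{\el}\widehat\tau$ and $v=\PiolaCurl{\el}\widehat v$ are exactly the transformations under which Lemma~\ref{la_trafo} already asserts this equality, I would simply quote that lemma.

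For the second identity, $\vdual{\tau}{c}_{\el}=\vdual{\widehat\tau}{\widehat c}_\elref$, I would compute directly. Writing $\tau\circ F_\el = J_\el^{-2}B_\el\widehat\tau B_\el^\T$ and $c\circ F_\el = J_\el B_\el^{-\T}\widehat c B_\el^{-1}$, the product $\tau c$ pulls back to $J_\el^{-1}B_\el\widehat\tau\widehat c B_\el^{-1}$ and, using $A:B=\tr(AB)$ for symmetric tensors together with the cyclic property of the trace, the $B_\el$ and $B_\el^{-1}$ cancel. Thus $(\tau:c)\circ F_\el = J_\el^{-1}(\widehat\tau:\widehat c)$, and the extra factor $|J_\el|$ from the change of variables in the area integral matches (assuming the orientation-preserving convention $J_\el>0$), yielding the claim.

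For the first identity, the normal-normal edge moment, I expect the main obstacle. I would first relate normals and tangents under the affine map: on an edge $e=F_\el(\widehat e)$, the tangent transforms by $B_\el\widehat t$, while the unit exterior normal satisfies $\bn = B_\el^{-\T}\widehat\bn/|B_\el^{-\T}\widehat\bn|$. Substituting into $J_\el^2\tau\circ F_\el = B_\el\widehat\tau B_\el^\T$ gives
\begin{align*}
   (\bn\cdot\tau\bn)\circ F_\el
   = \frac{\widehat\bn\cdot\widehat\tau\widehat\bn}{J_\el^2\,|B_\el^{-\T}\widehat\bn|^2}.
\end{align*}
The line-element transforms as $ds = |B_\el\widehat t|\,d\widehat s$, so $|e|=|B_\el\widehat t|\,|\widehat e|$. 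Combining these, the crucial simplification rests on the two-dimensional identity $|B_\el\widehat t|\cdot|B_\el^{-\T}\widehat\bn| = |J_\el|$, which holds because $B_\el^{-\T}$ equals $J_\el^{-1}$ times a rotated $B_\el$ (a direct $2\times2$ cofactor computation), so that $B_\el^{-\T}\widehat\bn$ is proportional to the $90^\circ$ rotation of $B_\el\widehat t$. With this identity, the prefactor $|B_\el\widehat t|/(J_\el^2|B_\el^{-\T}\widehat\bn|^2)$ reduces to $1/|B_\el\widehat t|$, and multiplying by $|e|=|B_\el\widehat t|\,|\widehat e|$ exactly produces the factor $|\widehat e|$ on the reference side, yielding $|e|\dual{\bn\cdot\tau\bn}{q}_e = |\widehat e|\dual{\widehat\bn\cdot\widehat\tau\widehat\bn}{\widehat q}_{\widehat e}$.

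The hardest part is the bookkeeping in the normal-normal identity: it is the interaction of the unusual $J_\el^2$ scaling in $\PiolaK{\el}$ (chosen, as the authors note, to preserve pointwise normal-normal traces) with the per-edge factor $|e|$ baked into the definition of the degree of freedom. Once the $2\times 2$ rotation identity for $B_\el^{-\T}$ is in hand, everything collapses algebraically; conceptually, the factor $|e|$ in \eqref{dof_el_nn} is precisely what is needed to absorb the edge-length scaling from the Piola-Kirchhoff map, confirming that \eqref{dof_el_nn} is an affinely invariant functional, just like \eqref{dof_el_id} and \eqref{dof_el_div}.
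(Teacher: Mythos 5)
Your proposal follows the paper's route: quote Lemma~\ref{la_trafo} for the divergence duality, and verify the other two identities by direct substitution into the definitions of $\PiolaK{\el}$, $\PiolaKc{\el}$, $\PiolaCurl{\el}$ together with a change of variables. For the normal-normal edge moment, the paper packages the same content in the compact formula $\bn=(J_\el/J_e)B_\el^{-\T}\widehat\bn$, where $J_e=|e|/|\widehat e|$ is the signed length ratio; your split into the unit-normal relation and the line-element scaling carries identical information, so this is essentially the same proof with slightly different bookkeeping.

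One algebraic slip to correct: the two-dimensional identity you invoke should read $|B_\el\widehat\bt|\,/\,|B_\el^{-\T}\widehat\bn| = |J_\el|$ (a quotient, not a product). Your own cofactor explanation --- $B_\el^{-\T}$ equals $J_\el^{-1}$ times $B_\el$ conjugated by a $90^\circ$ rotation $R$, so $B_\el^{-\T}\widehat\bn = J_\el^{-1}R(B_\el\widehat\bt)$ --- yields precisely this quotient form, and it is the quotient that makes the prefactor $|B_\el\widehat\bt|\,/\,(J_\el^2|B_\el^{-\T}\widehat\bn|^2)$ collapse to $1/|B_\el\widehat\bt|$ as you assert in the next sentence. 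The product version you wrote is false in general (take $B_\el=2\,\id$: then $|B_\el\widehat\bt|\cdot|B_\el^{-\T}\widehat\bn|=1$ while $|J_\el|=4$) and would not produce the stated reduction.
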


\begin{proof}
We have seen the transformation of $\vdual{\div\tau}{v}_\el$ in Lemma~\ref{la_trafo}.
The other relations follow by replacing $\tau=\PiolaK{\el}\widehat\tau$ and the test functions
as defined, and noting that
\(
   \bn=\frac{J_\el}{J_e}B_\el^{-\T}\widehat\bn.
\)
Here, $J_e$ denotes the signed ratio of the lengths of edges $e$ divided by $\widehat e$.
\end{proof}

\begin{lemma} \label{la_dof_el}
Degrees of freedom \eqref{dof_el} are unisolvent for $\XX{\el}$, $\el\in\mesh$.
\end{lemma}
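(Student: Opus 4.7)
The plan is to prove unisolvence by matching dimensions and verifying injectivity. The dimension count is immediate: starting from $\dim P^2(\el;\SS)=18$, the defining constraint $\bn\cdot\tau\bn|_e\in P^1(e)$ on each of the three edges removes exactly one scalar (the leading $P^2$-coefficient of the univariate polynomial $\bn\cdot\tau\bn|_e\in P^2(e)$), so $\dim\XX{\el}=15$. The three groups in \eqref{dof_el} contribute $2\cdot 3+3+6=15$ values as well, so unisolvence reduces to showing that a $\tau\in\XX{\el}$ with all prescribed values vanishing must be zero. By Lemma~\ref{la_dof} and the Piola--Kirchhoff map $\PiolaK{\el}$, it is no loss of generality to work on the reference triangle $\elref$. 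The three vanishing groups translate, respectively, into: $\bn\cdot\tau\bn|_e\equiv 0$ on every edge (the already-$P^1$ polynomial $\bn\cdot\tau\bn|_e$ being $L_2$-orthogonal to $P^1(e)$); $\div\tau\equiv 0$ in $\elref$ (the already-$P^1$ vector field $\div\tau$ being orthogonal to $P^1(\elref;\Rt)$); and $\vdual{\tau}{c}_\elref=0$ for every constant $c\in\SS$.

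Since $\div\tau=0$ on the simply connected $\elref$, there is an Airy stress function $\phi\in P^4(\elref)$, unique modulo $P^1(\elref)$, with
\[
   \tau=\Curl\Curl\phi,\quad\text{i.e., }\tau_{11}=\partial_{22}\phi,\ \tau_{22}=\partial_{11}\phi,\ \tau_{12}=-\partial_{12}\phi.
\]
A direct computation yields the identity $\bn\cdot\tau\bn=\partial_t^2\phi$, so $\bn\cdot\tau\bn|_e=0$ forces $\phi|_e$ to be affine on each edge. Subtracting a suitable element of $P^1(\elref)$, I may normalize $\phi$ to vanish at the three vertices, hence $\phi|_{\partial\elref}\equiv 0$, and consequently $\phi=bq$ with $b:=\lambda_1\lambda_2\lambda_3$ the cubic bubble and $q\in P^1(\elref)$. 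This exhausts what can be extracted from groups \eqref{dof_el_nn} and \eqref{dof_el_div} alone: the three scalar parameters of $q$ remain to be killed by \eqref{dof_el_id}.

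The main obstacle is this last step. An integration by parts, using $\div\tau=0$ and $\bn\cdot\tau\bn|_e=0$, gives
\[
   \vdual{\tau}{\strain{v}}_\elref
   =\sum_{e\in\cE(\elref)}\dual{\bt\cdot\tau\bn}{v\cdot\bt}_{e}
   \quad\forall v\in P^1(\elref;\Rt).
\]
Since $\strain:P^1(\elref;\Rt)\to P^0(\elref;\SS)$ is surjective and, by a short check on $\elref$, the map $v\mapsto(v\cdot\bt_e|_e)_{e\in\cE(\elref)}$ is a bijection from $P^1(\elref;\Rt)$ onto $\prod_e P^1(e)$, the zero-mean condition $\vdual{\tau}{c}_\elref=0$ ($\forall c\in\SS$) is equivalent to $\bt\cdot\tau\bn|_e$ being $L_2(e)$-orthogonal to $P^1(e)$ for every edge $e$. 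Using $\bt\cdot\tau\bn|_e=-\partial_t\partial_n(bq)|_e$ and expanding $q=\sum_{j=1}^3 q_j\lambda_j$, each edge contributes one nontrivial linear relation (the test against $1$ is automatic because $b$ vanishes at both endpoints of $e$). A calculation on $\elref$ produces, up to nonzero scaling, the system $q_2+q_3=q_1+q_3=q_1+q_2=0$, whose matrix has determinant $2$; hence $q_1=q_2=q_3=0$, so $\phi\equiv 0$ and $\tau\equiv 0$, completing the unisolvence proof. The delicate point is solely this final $3\times 3$ computation: identifying which scalar survives on each edge and verifying nondegeneracy of the resulting system.
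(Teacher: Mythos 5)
Your proof is correct and shares the paper's core strategy: establish $\dim\XX{\el}=15$, represent the divergence-free $\tau$ by an Airy stress function $\tau=\Curl\Curl\phi$ (written $\Curl\curl\beta$ in the paper) with $\phi\in P^4(\elref)$, factor $\phi=bq$ through the cubic bubble $b=\lambda_1\lambda_2\lambda_3$ using the vanishing normal-normal traces, and then kill $q\in P^1(\elref)$ with the constant-moment degrees of freedom \eqref{dof_el_id}. The genuine variation is in this last step. The paper integrates $\tau$ directly over $\el$ via Green's theorem and the basis $\{\bt_e\otimes\bt_e\}$ of $P^0(\el;\SS)$, arriving at $\int_e\grad\phi\cdot\bn\,\mathrm{d}s=0$ per edge; you instead pass through the identity $\vdual{\tau}{\strain{v}}=\sum_{e}\dual{\bt\cdot\tau\bn}{v\cdot\bt}_{e}$, the surjectivity of $\strain$ onto $P^0(\elref;\SS)$, and the bijection $v\mapsto(v\cdot\bt_e|_e)_e$ onto $\prod_e P^1(e)$, reducing \eqref{dof_el_id} to $\bt\cdot\tau\bn|_e\perp P^1(e)$. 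One more integration by parts along each edge (using $\bt\cdot\tau\bn|_e=-\partial_t\partial_n(bq)|_e$ and $\partial_n b|_e$ proportional to the quadratic edge bubble) turns this into $\int_e q\,b_e\,\mathrm{d}s=0$, i.e.\ $q$ vanishes at the three edge midpoints, which is exactly the paper's conclusion and explains your $3\times 3$ system $q_{i+1}+q_{i+2}=0$. Two small points to tighten. First, the claim that testing against $1$ is automatic requires $\partial_n(bq)$ to vanish at the endpoints of each edge, which holds because $b$ vanishes to \emph{second} order at the vertices (both $b$ and $\grad b$ are zero there), not merely because $b$ vanishes at the endpoints as you wrote. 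Second, your dimension count implicitly assumes the three leading-coefficient constraints are independent, which the paper verifies by exhibiting the explicit tensors $N_{i,3}$; however the cheap lower bound $\dim\XX{\el}\ge 18-3=15$ together with your injectivity already gives equality, so the unverified assertion does no harm.
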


\begin{proof}
To prepare the proof let us introduce some notation and tensors.
Given $\el\in\mesh$, we denote its edges, normal and tangential vectors by
$e_i$, $\bn_i$ and $\bt_i$ ($i=1,2,3$), respectively. The barycentric coordinates
are $\lambda_i$, numbered so that they are assigned to vertices opposite to corresponding edges.
All objects are numbered in positive orientation modulo $3$.
We introduce tensors
\[
   T_i:=\frac{\bt_{i+1}\odot\bt_{i+2}}{\bn_i\cdot\bt_{i+1} \bn_i\cdot\bt_{i+2}}\quad (i=1,2,3),
\]
where
$\bt_{j}\odot\bt_{k}:=\mathrm{sym}(\bt_j\otimes\bt_k):=
(\bt_{j}\bt_{k}^\T+\bt_{k}\bt_{j}^\T)/2$ is the symmetrized dyadic product
of vectors $\bt_j$ and $\bt_k$. It is immediate that $\bn_j\cdot T_i\bn_j|_{e_j}=\delta_{ij}$
for $i,j=1,2,3$. Therefore, $T_1,T_2,T_3$ form a basis of $P^0(\el;\SS)$.
We introduce the following tensors,
\begin{alignat*}{3}
   &N_{i,1}:= T_i,\quad
   &&N_{i,2}:= (\lambda_{i+2}-\lambda_{i+1})T_i,\quad
   &&N_{i,3}:= (\lambda_{i+2}-\lambda_{i+1})^2T_i,\\
   &B_{i,1}:=\lambda_i T_i,\quad
   &&B_{i,2}:=\lambda_i\lambda_{i+1} T_i,\quad
   &&B_{i,3}:=\lambda_i\lambda_{i+2} T_i
\end{alignat*}
for $i\in\{1,2,3\}$.
We observe that $N_{i,1},N_{i,2},N_{i,3}$ have zero normal-normal traces on edges $e_j$ ($j\not=i$),
and have non-zero polynomial normal-normal trace on $e_i$ of degrees $0$, $1$, and $2$,
respectively. In particular, $(N_{i,j}; i,j\in\{1,2,3\})$ are linearly independent.
Since $\lambda_iT_i$ has vanishing normal-normal traces on all the edges of $\el$
($i\in\{1,2,3\}$), and using the linear independence of the three functions
$\lambda_i,\lambda_i\lambda_{i+1},\lambda_i\lambda_{i+2}$ (for fixed $i\in\{1,2,3\}$), we find that
$(B_{i,j}; i,j\in\{1,2,3\})$ form a set of $9$ linearly independent tensors with
vanishing normal-normal traces along $\partial\el$.
Therefore, $\{N_{i,j}, B_{i,j};\; i,j=1,2,3\}$ is a basis of $P^2(\el;\SS)$,
and the subset $\{N_{i,k}, B_{i,j};\; i,j=1,2,3,\ k=1,2\}$ is a basis of $\XX{\el}$.

The dimension of $\XX{\el}$ is $15$ and equals the number of degrees of freedom \eqref{dof_el}:
$6$ in \eqref{dof_el_nn}, $3$ in \eqref{dof_el_id}, and $6$ in \eqref{dof_el_div}.
To prove the lemma it is therefore enough to show injectivity.
To this end, let $\tau\in\XX{\el}$ be given with vanishing degrees of freedom \eqref{dof_el}.

The vanishing of degrees \eqref{dof_el_div} means that $\div\tau=0$. Therefore,
$\tau=\Curl\curl\beta$ with $\beta\in H^2(\el)$, see~\cite[Lemma~3.2]{CarstensenD_98_PEE}.
Here, $\curl\beta:=(\grad\beta)^\perp$ (as defined previously) and
$\Curl\begin{pmatrix}v_1\\v_2\end{pmatrix}
:=\begin{pmatrix}\curl v_1^\T\\ \curl v_2^\T\end{pmatrix}$.
Since $\tau\in P^2(\el;\SS)$ we have $\beta\in P^4(\el)$.
Furthermore, $\beta$ is unique up to linear polynomials so that we can assume
that $\beta$ vanishes at the vertices of $\el$.
Since the normal-normal traces of $\tau$ on the edges vanish it follows that
the second-order tangential derivatives of $\beta$ on the edges vanish as well.
That is, $\beta|_e\in P^1(e)$ for $e\in\cE(\el)$, thus $\beta|_{\partial\el}=0$ since it
vanishes at the vertices. We have shown that $\beta=b_\el q$ holds for the element
bubble function $b_\el:=\lambda_1\lambda_2\lambda_3$ and some polynomial $q\in P^1(\el)$.

To conclude, we note that $\beta|_{\partial\el}=0$ gives
$\grad\beta|_e = (\grad\beta\cdot\bn)\bn|_e$, thus
$\curl\beta|_e=(\grad\beta)^\perp|_e=-(\grad\beta\cdot\bn)\bt|_e$ for $e\in\cE(\el)$.
Therefore, the vanishing of degrees of freedom \eqref{dof_el_id} implies
\begin{align*}
   0 &= \int_\el \tau\, \mathrm{d}(x,y) = \int_\el \Curl\curl\beta\, \mathrm{d}(x,y)
   = -\int_{\partial\el} \curl\beta\otimes\bt\,\mathrm{d}s
   = \sum_{e\in\cE(\el)} b_e \bt_e\otimes\bt_e
\end{align*}
with $b_e:=\int_e \grad\beta\cdot\bn\,\mathrm{d}s$, $e\in\cE(\el)$.
Since $\{\bt_e\otimes\bt_e;\; e\in \cE(\el)\}$ is a basis of $P^0(\el;\SS)$ we conclude
that $b_e=0$ for all $e\in\cE(\el)$. Recall that $\beta=b_\el q$ with element bubble
$b_\el$ and linear polynomial $q$. One finds that
$(\grad b_\el\cdot\bn)|_e=-d_e b_e$ with edge bubble $b_e\in P^2(e)$ and distance
$d_e$ of edge $e$ to the opposite vertex. This means that
\[
   0 = \int_e \grad\beta\cdot\bn\,\mathrm{d}s
     = -d_e \int_e q b_e\,\mathrm{d}s\quad\forall e\in\cE(\el).
\]
It follows that $q$ vanishes at the midpoints of the three edges. Therefore, $q=0$
and $\beta=q b_\el=0$, thus $\tau=0$. This finishes the proof.
\end{proof}

Degrees of freedom \eqref{dof_el} induce a local interpolation operator
\[
   \Inn{\el}:\; H(\div,\el;\SS)\cap L_q(\el;\SS) \to \XX{\el} \quad (\el\in\mesh,\ q>2).
\]
It has the following properties.

\begin{lemma} \label{la_Inn_el}
Given $q>2$ and $\el\in\mesh$,
operator $\Inn{\el}:\;H(\div,\el;\SS)\cap L_q(\el;\SS)\to \XX{\el}$ is
well defined and bounded. It commutes with the divergence operator,
\(
   \div\Inn{\el}=\Pi^1_\el\div,
\)
and has the projection property
$\Inn{\el}\tau=\tau$ $\forall\tau\in \XX{\el}$.
Furthermore, if $\tau\in H^{s,r}(\div,\el;\SS)$ with $0<s\le 1$ and $0\le r\le 2$,
the error estimates
\begin{alignat*}{2}
   \|\tau-\Inn{\el}\tau\|_{\el}
   &\lesssim h_\el^s \|\tau\|_{s,\el} + h_\el \|\div\tau\|_\el\quad
   &&\forall \tau\in H^{s,0}(\div,\el;\SS),\\
   \|\div(\tau-\Inn{\el}\tau)\|_\el
   &\lesssim h_\el^r \|\div\tau\|_{r,\el}
   \quad&&\forall \tau\in H^{s,r}(\div,\el;\SS)
\end{alignat*}
hold true uniformly for $\el\in\mesh$ and $\tau$ with the given regularity.
\end{lemma}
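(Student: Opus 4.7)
The four claims are handled in the natural order, and everything is eventually reduced to the reference triangle $\elref$ via the Piola--Kirchhoff transformation.

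\emph{Well-definedness.} The volume degrees of freedom in \eqref{dof_el_id} and \eqref{dof_el_div} are obviously continuous on $H(\div,\el;\SS)$. For the edge degrees \eqref{dof_el_nn} I invoke an edge-trace theorem: for $q>2$, tensors in $H(\div,\el;\SS)\cap L_q(\el;\SS)$ admit $\bn\cdot\tau\bn|_e$ as an integrable function on each edge $e\in\cE(\el)$, and the trace depends boundedly on $\|\tau\|_{L_q(\el)}+\|\div\tau\|_\el$. Combined with the unisolvence established in Lemma~\ref{la_dof_el}, this makes $\Inn{\el}$ a well-defined bounded linear operator. Boundedness uniform in $\el$ follows by transforming to $\elref$ using Lemma~\ref{la_dof} and the norm equivalences of Lemma~\ref{la_trafo}, where on the fixed $\elref$ we only need equivalence of norms on the finite-dimensional space $\XX{\elref}$.

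\emph{Projection and commuting property.} Unisolvence gives $\Inn{\el}\tau=\tau$ for $\tau\in\XX{\el}$, because both sides share the same degrees of freedom. For the commuting diagram, test \eqref{dof_el_div} against arbitrary $v\in P^1(\el;\Rt)$ to get $\vdual{\div(\Inn{\el}\tau)}{v}_\el=\vdual{\div\tau}{v}_\el$. Since $\Inn{\el}\tau\in P^2(\el;\SS)$, one has $\div\Inn{\el}\tau\in P^1(\el;\Rt)$, and this characterises $\div\Inn{\el}\tau$ as $\Pi^1_\el\div\tau$.

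\emph{Divergence error estimate.} The commuting property immediately gives
\[
   \|\div(\tau-\Inn{\el}\tau)\|_\el = \|\div\tau-\Pi^1_\el\div\tau\|_\el,
\]
and the stated bound for $0\le r\le 2$ follows from standard $L_2$-projection estimates for $\Pi^1_\el$ on shape-regular triangles (interpolation between the trivial $r=0$ bound and the optimal $r=2$ bound, which uses $P^1\subset\mathrm{range}(\Pi^1_\el)$).

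\emph{$L_2$ error estimate.} The key observation is that $P^0(\el;\SS)\subset\XX{\el}$, hence $\Inn{\el}$ reproduces constant tensors. Passing to $\elref$ via $\widehat\tau=(\PiolaK{\el})^{-1}\tau$ and noting that $\divref\widehat c=0$ for any $\widehat c\in P^0(\elref;\SS)$, boundedness of $\Inn{\elref}$ on $H^{s,0}(\divref,\elref;\SS)\cap L_q(\elref;\SS)$ yields
\begin{align*}
   \|\widehat\tau-\Inn{\elref}\widehat\tau\|_\elref
   &= \|(\widehat\tau-\widehat c)-\Inn{\elref}(\widehat\tau-\widehat c)\|_\elref
   \lesssim \|\widehat\tau-\widehat c\|_{s,\elref}+\|\divref\widehat\tau\|_\elref.
\end{align*}
Choosing $\widehat c\in P^0(\elref;\SS)$ to realise the Bramble--Hilbert/Deny--Lions bound $\|\widehat\tau-\widehat c\|_{s,\elref}\lesssim |\widehat\tau|_{s,\elref}$, and then transforming back with the scaling identities in Lemma~\ref{la_trafo} (which contribute the factors $h_\el^s$ and $h_\el$ on the right-hand side), delivers the claimed estimate.

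\emph{Main obstacle.} The delicate point is the boundedness of the edge degrees of freedom \eqref{dof_el_nn} on $H(\div,\el;\SS)\cap L_q(\el;\SS)$ with $q>2$, including its uniform behaviour under the Piola--Kirchhoff scaling. This cannot be handled by the usual $H^s$-trace theorem for $s>1/2$ alone; it relies on an edge-trace theorem for $H(\div)\cap L_q$ type spaces. Once this technical ingredient is in place, the remaining arguments are routine.
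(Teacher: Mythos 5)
Your proposal is correct and follows essentially the same route as the paper: boundedness of the edge degrees of freedom via an $H(\div)\cap L_q$-trace theorem (the paper cites Amrouche et al.\ for this), projection and commuting properties from unisolvence and \eqref{dof_el_div}, and the $L_2$ estimate by subtracting a constant tensor (using $P^0(\el;\SS)\subset\XX{\el}$ so that $\Inn{\el}$ reproduces constants), transforming to $\elref$, applying boundedness of $\Inn{\elref}$ plus Bramble--Hilbert, and scaling back via Lemma~\ref{la_trafo}. The only cosmetic difference is that the paper splits $\tau=(\tau-\Pi^0_\el\tau)+\Pi^0_\el\tau$ on the physical element before transforming, whereas you subtract the constant directly on $\elref$; the resulting scalings are identical.
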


\begin{proof}
The interpolation operator is well defined and bounded
for $\tau\in H(\div,\el;\SS)\cap L_q(\el;\SS)$ if $q>2$.
Indeed, the boundedness of degrees of freedom \eqref{dof_el_id} and \eqref{dof_el_div}
is immediate, and the moments of normal traces $\tau\bn$
on edges are also bounded, cf.~\cite[Lemma~4.7]{AmroucheBDG_98_VPT}.
The commutativity property holds due to degrees of freedom
\eqref{dof_el_div}.
The projection property is due to the unisolvency of the degrees of freedom.

Note that $H^s(\el;\SS)\subset L_q(\el;\SS)$ for
$q=2/(1-s)$ if $s<1$ and any $q>2$ if $s=1$ by Sobolev's embedding theorem,
see~\cite[Theorem~1.4.5.2]{Grisvard_85_EPN}.
Therefore, given $0<s\le 1$, $0\le r\le 2$, $\el\in\mesh$ and $\tau\in H^{s,r}(\div,\el;\SS)$,
$\Inn{\el}\tau$ is well defined.
We start with the triangle inequality and a standard error estimate to see that
\begin{align*}
   \|\tau-\Inn{\el}\tau\|_\el
   \le \|\tau - \Pi^0_\el\tau\|_\el + \|\Inn{\el}(\tau-\Pi^0_\el\tau)\|_\el
   \lesssim h_\el^s |\tau|_{s,\el} + \|\Inn{\el}(\tau-\Pi^0_\el\tau)\|_\el.
\end{align*}
Degrees of freedom \eqref{dof_el} are invariant
under appropriate transformations as specified in Lemma~\ref{la_dof}.
In particular, we have the following commuting properties,
\[
   \Inn{\el}\PiolaK{\el}\widehat\tau=\PiolaK{\el}\Inn{\elref}\widehat\tau, \quad
   \Pi^0_\el\PiolaK{\el}\widehat\tau=\PiolaK{\el}\Pi^0_\elref\widehat\tau.
\]
Therefore, using the scaling of the $L_2$ norm of tensors given by Lemma~\ref{la_trafo},
the boundedness of the trace operator (mentioned previously) on the reference element,
and a Bramble--Hilbert argument, we conclude that
\begin{align*}
   h_\el^2 \|\Inn{\el}(\tau-\Pi^0_\el\tau)\|_\el^2
   &\simeq \|\Inn{\elref}(\widehat\tau-\Pi^0_\elref\widehat\tau)\|_\elref^2
   \lesssim
   \|\widehat\bn\cdot(\widehat\tau-\Pi^0_\elref\widehat\tau)\widehat\bn\|_{\partial\elref}^2
   +
   \|\divref\widehat\tau\|_\elref^2
   \lesssim
   \|\divref\widehat\tau\|_\elref^2 + |\widehat\tau|_{s,\elref}^2.
\end{align*}
Transforming back to $\el$ and using again scaling properties given by Lemma~\ref{la_trafo}, we obtain
\[
   h_\el^2 \|\Inn{\el}(\tau-\Pi^0_\el\tau)\|_\el^2
   \lesssim
   h_\el^4 \|\div\tau\|_\el^2 + h_\el^{2+2s} |\tau|_{s,\el}^2.
\]
This yields the stated bound for $\|\tau-\Inn{\el}\|_\el$.
The proof is finished by using the commutativity property to see that
\[
   \|\div(\tau-\Inn{\el}\tau)\|_\el = \|\div\tau-\Pi^1_\el\div\tau\|_\el
   \lesssim
   h_\el^r\|\div\tau\|_{r,\el}.
\]
\end{proof}

Finally, we note the following conservation property of $\Inn{\el}$ which will be relevant for 
approximation estimates.

\begin{lemma} \label{la_Inn_el_t}
Operator $\Inn{\el}$ conserves linear moments of normal traces in the following sense.
Any $\tau\in H(\div,\el;\SS)\cap L_q(\el;\SS)$ with $q>2$ and $\el\in\mesh$ satisfies
\[
   \dual{\Inn{\el}(\tau)\bn}{v}_{\partial\el}
   =
   \dual{\tau\bn}{v}_{\partial\el}
   \quad\forall v\in P^1(\el;\Rt).
\]
\end{lemma}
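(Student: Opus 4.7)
The plan is to derive the claim from integration by parts applied to both $\tau$ and $\Inn{\el}(\tau)$, and then exploit two of the three families of degrees of freedom in \eqref{dof_el} to match the resulting volume terms.

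For any $v\in P^1(\el;\Rt)$, Green's identity on the triangle $\el$ gives
\[
   \dual{\tau\bn}{v}_{\partial\el}
   = \vdual{\div\tau}{v}_\el + \vdual{\tau}{\strain{v}}_\el
\]
and analogously
\[
   \dual{\Inn{\el}(\tau)\bn}{v}_{\partial\el}
   = \vdual{\div\Inn{\el}(\tau)}{v}_\el + \vdual{\Inn{\el}(\tau)}{\strain{v}}_\el.
\]
Thus it suffices to show that each of the two volume terms agrees with its counterpart.

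For the first volume term I would invoke the commuting diagram property $\div\Inn{\el}=\Pi^1_\el\div$ established in Lemma~\ref{la_Inn_el} (which itself is a direct consequence of degrees of freedom \eqref{dof_el_div}). Since $v\in P^1(\el;\Rt)$, this gives $\vdual{\div\Inn{\el}(\tau)}{v}_\el=\vdual{\Pi^1_\el\div\tau}{v}_\el=\vdual{\div\tau}{v}_\el$. For the second volume term I would use that $v\in P^1(\el;\Rt)$ implies $\strain{v}\in P^0(\el;\SS)$, so the degrees of freedom \eqref{dof_el_id} applied to $\tau-\Inn{\el}\tau$ yield $\vdual{\tau-\Inn{\el}(\tau)}{\strain{v}}_\el=0$, i.e.\ $\vdual{\Inn{\el}(\tau)}{\strain{v}}_\el=\vdual{\tau}{\strain{v}}_\el$. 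Adding these two identities proves the claim.

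There is no real obstacle here; the statement is essentially a bookkeeping consequence of how \eqref{dof_el_id} and \eqref{dof_el_div} interact with affine test functions $v$. The only subtlety worth verifying en route is that the boundary duality $\dual{\tau\bn}{v}_{\partial\el}$ is well defined under the regularity assumption $\tau\in H(\div,\el;\SS)\cap L_q(\el;\SS)$ with $q>2$, which is exactly the framework already justified for $\Inn{\el}$ in Lemma~\ref{la_Inn_el}.
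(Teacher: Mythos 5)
Your proof is correct, and it is in fact slightly leaner than the paper's. The paper splits $\dual{\cdot\bn}{v}_{\partial\el}$ into its normal-normal and tangential-normal parts, establishes $\dual{\bn\cdot\Inn{\el}(\tau)\bn}{v\cdot\bn}_{\partial\el}=\dual{\bn\cdot\tau\bn}{v\cdot\bn}_{\partial\el}$ directly from the edge degrees of freedom \eqref{dof_el_nn}, then recovers the tangential-normal part from Green's identity together with \eqref{dof_el_id} and \eqref{dof_el_div}, and finally sums. You bypass the normal/tangential split entirely: apply Green's identity once to the full boundary pairing and match the two volume integrals, which needs only \eqref{dof_el_div} (via the commuting property $\div\Inn{\el}=\Pi^1_\el\div$) and \eqref{dof_el_id} (since $\strain{v}\in P^0(\el;\SS)$). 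Thus \eqref{dof_el_nn} is not used at all in your argument; the contributions from the normal-normal edge moments cancel silently inside Green's identity. Both approaches are valid; the paper's version makes the normal-normal equality on $\partial\el$ explicit, which is perhaps of independent interest, whereas yours reaches the stated conclusion with fewer ingredients. Your remark on well-definedness of $\dual{\tau\bn}{v}_{\partial\el}$ under $\tau\in H(\div,\el;\SS)\cap L_q(\el;\SS)$, $q>2$, is appropriate and matches the framework already established for $\Inn{\el}$.
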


\begin{proof}
Degrees of freedom \eqref{dof_el} imply that any $v\in P^1(\el;\Rt)$ satisfies
\begin{align*}
   \dual{\bn\cdot\Inn{\el}(\tau)\bn}{v\cdot\bn}_{\partial\el}
   &=
   \dual{\bn\cdot\tau\bn}{v\cdot\bn}_{\partial\el} \quad\text{and}\\
   \dual{\bt\cdot\Inn{\el}(\tau)\bn}{v\cdot\bt}_{\partial\el}
   &=
   \dual{\Inn{\el}(\tau)\bn}{v}_{\partial\el}
   -\dual{\bn\cdot\Inn{\el}(\tau)\bn}{v\cdot\bn}_{\partial\el}\\
   &=
   \vdual{\div \Inn{\el}(\tau)}{v}_\el + \vdual{\Inn{\el}(\tau)}{\strain{v}}_\el
   -\dual{\bn\cdot\Inn{\el}(\tau)\bn}{v\cdot\bn}_{\partial\el}\\
   &=
   \vdual{\div \tau}{v}_\el + \vdual{\tau}{\strain{v}}_\el
   -\dual{\bn\cdot\tau\bn}{v\cdot\bn}_{\partial\el}
   =
   \dual{\bt\cdot\tau\bn}{v\cdot\bt}_{\partial\el}.
\end{align*}
Summation proves the statement.
\end{proof}

\subsection{The approximation space}

We use the local spaces $\XX{\el}$ from \eqref{P21} with corresponding product space $\XX{\mesh}$
 to define our conforming finite element space
\[
   P^2_{nn}(\mesh) := \XX{\mesh}\cap H_{nn}(\div,\mesh;\SS)
   = \{\tau\in\XX{\mesh};\; [\bn\cdot\tau\bn]|_e=0\ \forall e\in\cE(\Omega)\}.
\]
Here, $[\bn\cdot\tau\bn]|_e$ is the jump (with a certain orientation) of the normal-normal
trace of $\tau$ across edge $e$.
Of course, this conformity is achieved by assigning unique degrees of freedom \eqref{dof_el_nn}
on the interior edges of the mesh. This defines the global interpolation operator
\[
   \Inn{\mesh}:\; \begin{cases}
         H(\div,\Omega;\SS)\cap L_q(\Omega;\SS) &\to\quad P^2_{nn}(\mesh),\\
         \qquad\qquad \tau
         &\mapsto \bigl(\Inn{\mesh}\tau\bigr)|_\el:= \Inn{\el}\tau|_\el\ \text{for}\ \el\in\mesh
                 \end{cases}
   \quad (q>2).
\]
Direct application of Lemma~\ref{la_Inn_el} gives the following result.

\begin{prop} \label{prop_Inn}
Operator $\Inn{\mesh}:\;H(\div,\Omega;\SS)\cap L_q(\Omega;\SS)\to P^2_{nn}(\mesh)$ is
well defined and bounded for $q>2$.
It commutes with the piecewise divergence operator,
\(
   \div_\mesh\Inn{\mesh}=\Pi^1_\mesh\div,
\)
and has the projection property
$\Inn{\mesh}\tau=\tau$ for $\tau\in P^2_{nn}(\mesh)$.
Given $0<s\le 1$, $0\le r\le 2$, any $\tau\in H^{s,r}(\div,\Omega;\SS)$ satisfies
\begin{alignat*}{2}
   \|\tau-\Inn{\mesh}\tau\|
   &\lesssim \|h_\mesh\|_{\infty}^s \|\tau\|_s +  \|h_\mesh\|_{\infty} \|\div\tau\|\quad
   &&\forall \tau\in H^{s,0}(\div,\Omega;\SS),\\
   \|\div(\tau-\Inn{\mesh}\tau)\|_\mesh
   &\lesssim \|h_\mesh\|_\infty^r \|\div\tau\|_r
   \quad&&\forall \tau\in H^{s,r}(\div,\Omega;\SS)
\end{alignat*}
with intrinsic constants independent of $\tau$ and $\mesh$.
\end{prop}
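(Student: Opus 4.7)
The plan is to assemble the elementwise results of Lemma~\ref{la_Inn_el} into the claimed global statement; essentially everything is local except the global $H_{nn}$-conformity of the interpolant, which is the one point that requires an honest verification.

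First I would argue that $\Inn{\mesh}$ is well defined as a map into the product space $\XX{\mesh}$: for each $\tau\in H(\div,\Omega;\SS)\cap L_q(\Omega;\SS)$ and each $\el\in\mesh$, the restriction $\tau|_\el$ lies in $H(\div,\el;\SS)\cap L_q(\el;\SS)$, so Lemma~\ref{la_Inn_el} applies and $\Inn{\el}\tau|_\el\in\XX{\el}$ is defined and bounded. Summing local bounds over $\mesh$ yields global boundedness in the product norm. The projection property and the commutativity $\div_\mesh \Inn{\mesh}=\Pi^1_\mesh\div$ are immediate from their elementwise counterparts applied piece by piece.

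The key step is to show that $\Inn{\mesh}\tau$ belongs to $P^2_{nn}(\mesh)$, i.e.\ that its normal-normal trace is single-valued across every interior edge $e\in\cE(\Omega)$. Let $\el_+,\el_-\in\mesh$ be the two triangles sharing $e$ with common normal $\bn_e$. Since $\tau\in L_q(\Omega;\SS)$ with $q>2$, the trace $\bn_e\cdot\tau\bn_e|_e$ is a well-defined scalar function on $e$ (in an $L_p(e)$ sense with $p>2$), and it has the same value whether computed as a trace from $\el_+$ or from $\el_-$. Consequently the two linear moments in \eqref{dof_el_nn} associated with $e$ take the same value from both sides. Because the restriction of $\Inn{\el_\pm}\tau$ to the local trace space $\bn_e\cdot(\cdot)\bn_e|_e \in P^1(e)$ is uniquely determined by these two moments (by the unisolvency established in Lemma~\ref{la_dof_el}), the two linear polynomial traces must coincide, so the normal-normal jump across $e$ vanishes and $\Inn{\mesh}\tau\in P^2_{nn}(\mesh)$.

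Finally, for the a priori estimates, I would apply the local bounds from Lemma~\ref{la_Inn_el} on each $\el$, square, sum over $\mesh$, and bound $h_\el^s\le \|h_\mesh\|_\infty^s$ and $h_\el^r\le\|h_\mesh\|_\infty^r$ in the relevant terms; the right-hand sides then assemble into $\|\tau\|_s$, $\|\div\tau\|$, and $\|\div\tau\|_r$ using additivity of the $L_2$- and Sobolev--Slobodeckij (semi-)norms over the mesh. The only real obstacle is the conformity argument sketched above, and it hinges precisely on the hypothesis $q>2$, which is what turns the a priori merely $H^{-1/2}$ normal trace into a genuinely pointwise-integrable object on edges whose two one-sided values agree.
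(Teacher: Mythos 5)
Your overall approach matches the paper's: the paper's own proof of this proposition is a one-liner (``Direct application of Lemma~\ref{la_Inn_el}''), and you correctly identify that the only non-trivial point is the global $H_{nn}$-conformity, which you establish by checking that the edge moments in \eqref{dof_el_nn} are single-valued on interior edges.

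Two places deserve tightening, though neither invalidates the argument. First, you attribute the single-valuedness of $\bn_e\cdot\tau\bn_e|_e$ to the hypothesis $q>2$ alone, writing ``it hinges precisely on the hypothesis $q>2$''. That is not quite right: $L_q$-regularity by itself does not even produce a trace on a one-dimensional set, and even when a pointwise trace exists the two one-sided values need not agree for a generic piecewise-smooth tensor. What actually ensures agreement is the \emph{global} $\div$-conformity $\tau\in H(\div,\Omega;\SS)$: Green's identity shows that the distributional normal trace $\tau\bn_e$ has zero jump across every interior edge $e$. The additional $L_q$-regularity (via the cited \cite[Lemma~4.7]{AmroucheBDG_98_VPT} combined with local $H(\div)$) upgrades the normal trace on each $\partial\el$ to a Lebesgue function, so that the distributional equality becomes an a.e.\ equality, and the moments against $P^1(e)$ are well-defined bounded functionals that take the same value from both sides. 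In short: $q>2$ makes the degrees of freedom well defined; $H(\div,\Omega;\SS)$ makes them single-valued. Second, the Sobolev--Slobodeckij seminorm is not additive over the mesh; one only has the one-sided bound $\sum_{\el}|\tau|_{s,\el}^2\le|\tau|_{s}^2$ for $0<s<1$, since the global double integral dominates the sum of the local ones. This is exactly the inequality your estimate needs, so the argument goes through, but ``additivity'' is the wrong word.
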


\begin{remark} \label{rem_Inn}
By degrees of freedom \eqref{dof_el_id} it is clear that
$\Inn{\mesh}$ has the global projection property
$\vdual{\Inn{\mesh}\tau}{c}=\vdual{\tau}{c}$ for any
$\tau\in H(\div,\Omega;\SS)\cap L_q(\Omega;\SS)$ with $q>2$ and
any $c\in P^0(\Omega;\SS)$; in particular,
$\vdual{\tr(\Inn{\mesh}\tau)}{1}=\vdual{\tr(\tau)}{1}$.
\end{remark}

The local conservation property given by Lemma~\ref{la_Inn_el_t} transforms into
the following result.

\begin{lemma} \label{la_Inn_cons}
Any $\tau\in H(\div,\Omega;\SS)\cap L_q(\Omega;\SS)$ with $q>2$ satisfies
\[
   \dual{\trnn(v)}{\Inn{\mesh}(\tau)}_\cS
   = \dual{\trnn(v)}{\tau}_\cS = \dual{v}{\tau\bn}_\Gamma
   \quad\forall v\in P^1(\mesh;\Rt)\cap H^1(\Omega;\Rt).
\]
\end{lemma}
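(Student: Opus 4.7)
The plan is to reduce both equalities to a single elementwise Green's identity and then invoke the conservation property of Lemma~\ref{la_Inn_el_t}. The crucial observation is that the test field $v\in P^1(\mesh;\Rt)\cap H^1(\Omega;\Rt)$ is globally continuous while its restriction to each triangle is exactly in $P^1(\el;\Rt)$, which is the space of test functions in Lemma~\ref{la_Inn_el_t}.

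For the second equality, I would use that $\tau\in H(\div,\Omega;\SS)$ is globally divergence-conforming, so a single global Green's identity (applied to $v\in H^1(\Omega;\Rt)$ and $\tau$) yields
\[
   \dual{\trnn(v)}{\tau}_\cS
   = \vdual{\strain{v}}{\tau} + \vdual{v}{\div\tau}_\mesh
   = \vdual{\strain{v}}{\tau} + \vdual{v}{\div\tau}
   = \dual{v}{\tau\bn}_\Gamma.
\]

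For the first equality I would proceed elementwise on $\Inn{\mesh}\tau$, which lives in $\XX{\mesh}$ (piecewise smooth). Applying Green's formula triangle by triangle,
\[
   \dual{\trnn(v)}{\Inn{\mesh}\tau}_\cS
   = \sum_{\el\in\mesh} \Bigl(\vdual{\strain{v}}{\Inn{\el}\tau}_\el + \vdual{v}{\div\Inn{\el}\tau}_\el\Bigr)
   = \sum_{\el\in\mesh} \dual{v}{\Inn{\el}(\tau)\bn}_{\partial\el}.
\]
Now I invoke Lemma~\ref{la_Inn_el_t} with test function $v|_\el\in P^1(\el;\Rt)$ on each triangle to replace $\Inn{\el}(\tau)\bn$ by $\tau\bn$ under the boundary duality. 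Reversing Green's identity elementwise on $\tau$ and using $\tau\in H(\div,\Omega;\SS)$ gives back $\dual{\trnn(v)}{\tau}_\cS$, and this establishes the first equality.

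There is essentially no obstacle here once Lemma~\ref{la_Inn_el_t} is available; the only point that requires a word of care is to observe that although $\Inn{\mesh}\tau$ need not lie in $H(\div,\Omega;\SS)$ (its tangential-normal components may jump across interior edges), these jumps are harmless in the duality with $\trnn(v)$ because $v$ is globally continuous and the boundary contributions are collected through the elementwise conservation property, so jumps in $\bt\cdot\Inn{\mesh}(\tau)\bn$ cancel exactly as they do for the original $\tau$. A minor sanity check is that the interior-edge contributions on the right-hand side $\sum_\el\dual{v}{\tau\bn}_{\partial\el}$ telescope to $\dual{v}{\tau\bn}_\Gamma$ thanks to continuity of $v$ and $H(\div)$-conformity of $\tau$, which also delivers the second equality independently.
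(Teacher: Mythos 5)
Your proof is correct and follows the same route as the paper: elementwise Green's identity to reduce the skeleton duality to boundary integrals $\sum_{\el}\dual{v}{\cdot\,\bn}_{\partial\el}$, then Lemma~\ref{la_Inn_el_t} applied with $v|_\el\in P^1(\el;\Rt)$ on each triangle for the first equality, and the global Green's identity (using $H(\div,\Omega;\SS)$-conformity of $\tau$) for the second. The paper's proof is just a one-sentence version of exactly this argument.
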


\begin{proof}
The first identity follows from Lemma~\ref{la_Inn_el_t} by summation over $\el\in\mesh$,
and the second one is simply due to the regularity of $\tau$ and Green's identity.
\end{proof}

The next lemma is critical to establish robustness of our mixed finite element
scheme. It is a discrete version of the trace-dev-div Lemma~\ref{la_trdevdiv}
for order $s=0$.

\begin{lemma} \label{la_trdevdiv2}
Let $\Sigma\subset L_2(\Omega;\SS)$ be a closed subspace that does not contain $\id$.
Furthermore, let $\mesh$ be any regular, shape-regular triangular mesh that covers $\Omega$.
Bound
\[
   \|\tr(\tau)\| \lesssim \|\dev\tau\|
\]
holds true uniformly for any $\tau\in\Sigma\cap P^2_{nn}(\mesh)$ with
\begin{align} \label{trdevdiv_ass}
   \div_\mesh\tau=0
   \quad\text{and}\quad
   \dual{\trnn(w)}{\tau}_\cS=0\quad\forall w\in P^1(\mesh;\R^2)\cap H^1_0(\Omega;\R^2).
\end{align}
The intrinsic constant appearing in the bound is independent of $\mesh$.
\end{lemma}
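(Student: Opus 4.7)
The plan is to apply the continuous trace-dev-div inequality from Lemma~\ref{la_trdevdiv} at order $s=0$ to the subspace $\Sigma\subset L_2(\Omega;\SS)$, obtaining
\[
   \|\tr(\tau)\| \lesssim \|\dev\tau\| + \|\div\tau\|_{-1},
\]
where $\div\tau\in H^{-1}(\Omega;\Rt)$ is the distributional divergence of $\tau$ on $\Omega$. Everything then reduces to bounding $\|\div\tau\|_{-1}\lesssim\|\dev\tau\|$ using the two hypotheses in \eqref{trdevdiv_ass} and the normal-normal continuity built into $P^2_{nn}(\mesh)$.

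Given $\phi\in H^1_0(\Omega;\Rt)$, I would integrate by parts elementwise. Since $\div_\mesh\tau=0$ and $\phi|_\Gamma=0$, all volume terms and boundary-edge terms drop out, leaving
\[
   \dual{\div\tau}{\phi}_\Omega=-\vdual{\tau}{\strain\phi}=-\sum_{e\in\cE(\Omega)}\dual{[\tau\bn_e]_e}{\phi}_e.
\]
Let $\phi_h\in P^1(\mesh;\Rt)\cap H^1_0(\Omega;\Rt)$ be a Scott--Zhang interpolant of $\phi$. The same computation applied to $\phi_h$ gives $\sum_e\dual{[\tau\bn_e]_e}{\phi_h}_e=\vdual{\strain{\phi_h}}{\tau}=\dual{\trnn(\phi_h)}{\tau}_\cS$, which vanishes by the second hypothesis in \eqref{trdevdiv_ass}. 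Hence $\phi$ may be replaced by $\phi-\phi_h$.

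The decisive structural observation is that the edge jumps involve only $\dev\tau$. First, $\tau\in P^2_{nn}(\mesh)$ means that $\bn_e\cdot\tau\bn_e$ is continuous across $e\in\cE(\Omega)$, so $[\tau\bn_e]_e$ is purely tangential: $[\tau\bn_e]_e=[\bt_e\cdot\tau\bn_e]_e\,\bt_e$. Second, using $\bt_e\cdot\bn_e=0$ one has $\bt_e\cdot\tau\bn_e=\bt_e\cdot\dev\tau\bn_e$, since the spherical part $\tfrac12\tr(\tau)\id$ contributes $\tfrac12\tr(\tau)(\bt_e\cdot\bn_e)=0$. Combining,
\[
   [\tau\bn_e]_e=[\bt_e\cdot\dev\tau\bn_e]_e\,\bt_e.
\]

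From here the estimate follows by standard tools. A polynomial trace inverse estimate on shape-regular triangles yields $\|[\bt_e\cdot\dev\tau\bn_e]_e\|_e\lesssim h_e^{-1/2}(\|\dev\tau\|_{\el_1}+\|\dev\tau\|_{\el_2})$ for the two elements $\el_1,\el_2$ meeting at $e$, while Scott--Zhang stability and approximation give $\|\phi-\phi_h\|_e\lesssim h_e^{1/2}|\phi|_{1,\tilde\omega_e}$ on the extended patch $\tilde\omega_e$. The powers of $h_e$ cancel, and Cauchy--Schwarz combined with the finite overlap of patches (a shape-regularity consequence) produces
\[
   |\dual{\div\tau}{\phi}_\Omega|\lesssim\|\dev\tau\|\,|\phi|_1\quad\forall\phi\in H^1_0(\Omega;\Rt),
\]
that is, $\|\div\tau\|_{-1}\lesssim\|\dev\tau\|$ uniformly in $\mesh$. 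Inserting this into the continuous inequality finishes the proof. The main obstacle I anticipate is precisely the $\bt_e\perp\bn_e$ cancellation in step three: without it, the edge jumps would carry a $\|\tr(\tau)\|$ contribution that could not be absorbed by a left-hand side of the same type.
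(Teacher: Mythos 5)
Your proposal is correct and follows essentially the same route as the paper's own proof: reduction via Lemma~\ref{la_trdevdiv} at $s=0$, Scott--Zhang interpolation to replace $\phi$ by $\phi-\phi_h$, the key observation that normal-normal continuity together with $\bt_e\perp\bn_e$ makes the edge jumps depend only on $\dev\tau$, and then a discrete trace inequality paired with the $h_e^{1/2}$ edge approximation estimate. The paper organizes the computation via $\vdual{\tau}{\strain v}=\dual{\trnn(v)}{\tau}_\cS$ rather than starting from the distributional pairing $\dual{\div\tau}{\phi}$, but that is only a sign and presentational difference.
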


\begin{proof}
By Lemma~\ref{la_trdevdiv}
it suffices to bound $\|\div\tau\|_{-1}\lesssim \|\dev\tau\|$
for $\tau\in\Sigma\cap P^2_{nn}(\mesh)$ that satisfies \eqref{trdevdiv_ass}.
To this end, consider any $v\in H^1_0(\Omega;\Rt)$ and its Scott--Zhang interpolant
$v_h\in P^1(\mesh;\Rt)\cap H^1_0(\Omega;\Rt)$, cf.~\cite{ScottZ_90_FEI}.
We recall the generic notation $\bn$, $\bt$ for the unit normal and
tangential vectors on any $e\in\cE$, with a certain orientation,
and denote by $[\bt\cdot\tau\bn]$ the jumps of $\bt\cdot\tau\bn$ across the edges.
Then using, in this order, that
$\div_\mesh\tau=0$, $\dual{\trnn(v_h)}{\tau}_\cS=0$,
$[\bt\cdot\tau\bn]=[\bt\cdot(\dev\tau)\bn]$ on any edge $e\in\cE$ by the orthogonality
of the vectors, the approximation properties of the Scott--Zhang interpolation, and
the discrete trace inequality $|e|^{1/2}\|\dev\tau\|_e\lesssim \|\dev\tau\|_\el$
for $e\in\cE(\el)$ and $\el\in\mesh$, we calculate
\begin{align*}
   &\vdual{\tau}{\strain{v}} = \dual{\trnn(v)}{\tau}_\cS
   = \dual{\trnn(v-v_h)}{\tau}_\cS
   = \sum_{e\in\cE(\Omega)} \dual{(v-v_h)\cdot\bt}{[\bt\cdot\tau\bn]}_e\\
   &= \sum_{e\in\cE(\Omega)} \dual{(v-v_h)\cdot\bt}{[\bt\cdot(\dev\tau)\bn]}_e
   \le \sum_{e\in\cE(\Omega)} |e|^{-1/2}\|v-v_h\|_e |e|^{1/2}\|\dev\tau\|_e
   \lesssim \|v\|_1 \|\dev\tau\|.
\end{align*}
Taking the supremum with respect to $v\in H^1_0(\Omega;\Rt)$ with $\|v\|_1=1$ gives
\(
   \|\div\tau\|_{-1} \lesssim \|\dev\tau\|
\)
as wanted. This finishes the proof.
\end{proof}

\section{Mixed finite element scheme} \label{sec_fem}

Let $S^1(\mesh;\Rt):=P^1(\mesh;\Rt)\cap H^1(\Omega;\Rt)$ be
the standard space of continuous, piecewise linear vector-valued functions,
and set $S^1_D(\mesh;\Rt):=S^1(\mesh;\Rt)\cap H^1_D(\Omega;\Rt)$.
For $s>1$, denote by $\Pi^{1,c}_\mesh:\; H^s(\Omega;\Rt)\to S^1(\mesh;\Rt)$
the nodal interpolation operator.
Recall that we are using extensions $u_D\in H^1(\Omega;\Rt)$ and
$\sigma_N\in H(\div,\Omega;\SS)$ of the Dirichlet and Neumann data in \eqref{BC}.
We assume that $u_D\in H^{s}(\Omega;\Rt)$ with $s>1$ so that the interpolant
\begin{equation} \label{Dir_approx}
   u_{D,h}:=\Pi^{1,c}_\mesh(u_D)
\end{equation}
is well defined.
Corresponding to $\sigma_0$ in \eqref{sigma0}, we introduce the constant tensor
\begin{equation} \label{sigma0h}
   \sigma_{0,h}:=
   \frac {\lambda+\mu}{|\Omega|} \dual{u_{D,h}\cdot\bn}{1}_{\G{hc}\cup\G{sc}} \id.
\end{equation}
We select the discrete trace spaces
\[
   S^1(\cS) := \trnn\bigl(S^1(\mesh;\Rt)\bigr),\quad
   S^1_D(\cS) := \trnn\bigl(S^1_D(\mesh;\Rt)\bigr).
\]
Then our mixed finite element scheme reads as follows.

\emph{Find $\sigma_h\in P^2_{nn}(\mesh)$, $u_h\in P^1(\mesh;\Rt)$, and $\eta_h\in S^1(\cS)$
such that}
\begin{subequations} \label{FEM}
\begin{align} \label{BCh}
   &\eta_h|_\G{hc}=u_{D,h}|_\G{hc},\quad
    \eta_h\cdot\bn|_\G{sc}=u_{D,h}\cdot\bn|_\G{sc},\quad
    \eta_h\cdot\bt|_\G{ss}=u_{D,h}\cdot\bt|_\G{ss}
\end{align}
\emph{and, for any $\dsigma\in P^2_{nn}(\mesh)$, $\du\in P^1(\mesh;\Rt)$,
and $\deta\in S^1_D(\cS)$,}
\begin{alignat}{3}
   &\vdual{\AA\sigma_h}{\dsigma} + \vdual{u_h}{\div\dsigma}_\mesh - \dual{\eta_h}{\dsigma}_\cS
   &&= 0,\label{FEMa}\\
   &\vdual{\div\sigma_h}{\du}_\mesh - \dual{\deta}{\sigma_h}_\cS
   &&= -\vdual{f}{\du}-\dual{\deta}{\sigma_N}_\cS\label{FEMb}.
\end{alignat}
\end{subequations}
We note that the degrees of freedom in $P^2_{nn}(\mesh)$ allow for an explicit implementation
of the normal-normal part of the Neumann boundary conditions,
e.g., by using the data of $\Inn{\mesh}(\sigma_N)$.
For ease of presentation we selected the weak form in \eqref{FEMb} for
all of the Neumann conditions, cf.~Remark~\ref{rem_bc}.
We also note that the approximation of Dirichlet data by interpolation
is just one of many choices, cf.~\eqref{Dir_approx},~\eqref{BCh}.
Any piecewise polynomial selection
that is conforming in the trace space of $H^1(\Omega;\Rt)$ is permitted.
The corresponding data approximation error will appear in the error estimates
implicitly through the approximation of $\eta$ by an element
$\rho\in S^1(\cS)$ that satisfies the discrete Dirichlet conditions in \eqref{BCh},
i.e., by an element of the subset
\[
   S^1_{g_{D,h}}(\cS) := \{\rho\in S^1(\cS);\;
   \rho|_\G{hc}=u_{D,h}|_\G{hc},\ \rho\cdot\bn|_\G{sc}=u_{D,h}\cdot\bn|_\G{sc},\
   \rho\cdot\bt|_\G{ss}=u_{D,h}\cdot\bt|_\G{ss}\}.
\]
Our second main result is the locking-free, quasi-optimal convergence of
mixed finite element scheme \eqref{FEM}.

\begin{theorem} \label{thm_Cea}
Let $f\in L_2(\Omega;\Rt)$ and boundary data as in \eqref{ext} be given
where $u_D\in H^s(\Omega)$ with $s>1$.
Furthermore, let boundary decomposition \eqref{split} be so that
Korn's inequality \eqref{Korn} holds. Then, mixed scheme \eqref{FEM} has a unique solution
$(\sigma_h,u_h,\eta_h)$.

To state error estimates let
$(\sigma,u,\eta)\in H_{nn}(\div,\mesh;\SS)\times L_2(\Omega;\Rt)\times H^{1/2}_t(\cS;\Rt)$
denote the solution to \eqref{VF}, and recall definitions \eqref{sigma0} and \eqref{sigma0h}
of $\sigma_0$ and $\sigma_{0,h}$ respectively,
We distinguish between two cases (and note that they overlap
if $\G{ss}\cup\G{sf}=\emptyset$ and $\sigma_0=\sigma_{0,h}$).\\
1. \emph{Case $\G{ss}\cup\G{sf}\not=\emptyset$ or $\sigma_0=\sigma_{0,h}$.}
If the shape-regular mesh $\mesh$ is a refinement of an initial mesh
$\mesh_0$ that has a boundary node
$a\in \G{ss}\cup\G{sf}$ with $\mathrm{dist}(a,\G{hc}\cup\G{sc})>0$,
and which is not a corner of the polygon $\partial\Omega$ with $\mathrm{dist}(a,\G{ss})=0$,
then scheme \eqref{FEM} converges quasi-optimally,
\begin{align} \label{bound_Cea}
   &\|\sigma-\sigma_h\|_{\div,\mesh} + \|u-u_h\| + \|\eta-\eta_h\|_{1/2,t,\cS}\nonumber\\
   &\lesssim
   \inf\bigl\{
   \|\sigma-\tau\|_{\div,\mesh} + \|u-v\| + \|\eta-\rho\|_{1/2,t,\cS};\;
   \tau\in P^2_{nn}(\mesh), v\in P^1(\mesh;\Rt), \rho\in S^1_{g_{D,h}}(\cS) \bigr\}
\end{align}
with a hidden constant that is independent of $\nu$ and $\mesh$.\\
2. \emph{Case $\G{ss}\cup\G{sf}=\emptyset$.} We consider any shape-regular mesh $\mesh$
and use the stress approximation $\overline{\sigma}_h:=\sigma_h+\sigma_0-\sigma_{0,h}$.
Scheme \eqref{FEM} converges quasi-optimally in the sense that
\eqref{bound_Cea} holds with $\sigma_h$ replaced by $\overline{\sigma}_h$.
As before, the hidden constant is independent of $\nu$ and $\mesh$.
\end{theorem}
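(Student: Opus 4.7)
The plan is to deploy the Brezzi framework at the discrete level: verify that the off-diagonal bilinear form $b(\tau;v,\rho) := \vdual{\div\tau}{v}_\mesh - \dual{\rho}{\tau}_\cS$ is uniformly inf-sup stable on $P^2_{nn}(\mesh) \times (P^1(\mesh;\Rt) \times S^1_D(\cS))$, and that $\vdual{\AA\cdot}{\cdot}$ is uniformly coercive on the resulting discrete kernel. Continuity of all forms is immediate from the definitions. These three ingredients yield existence, uniqueness, and stability with $\nu$- and $\mesh$-independent constants; standard mixed-method theory then gives the quasi-optimal bound \eqref{bound_Cea}, with the Dirichlet data approximation encoded in the infimum being taken over $S^1_{g_{D,h}}(\cS)$.

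For the discrete inf-sup I would use $\Inn{\mesh}$ itself as a Fortin-type operator. By Proposition~\ref{prop_Inn}, $\Inn{\mesh}$ is bounded from $H(\div,\Omega;\SS)\cap L_q(\Omega;\SS)$ ($q>2$) into $P^2_{nn}(\mesh)$ and commutes with the divergence via $\div_\mesh\Inn{\mesh}=\Pi^1_\mesh\div$, whence $\vdual{\div\Inn{\mesh}\tau}{v_h}_\mesh = \vdual{\div\tau}{v_h}_\mesh$ for every $v_h\in P^1(\mesh;\Rt)$. Lemma~\ref{la_Inn_cons} additionally gives $\dual{\rho_h}{\Inn{\mesh}\tau}_\cS = \dual{\rho_h}{\tau}_\cS$ for every $\rho_h \in S^1_D(\cS)\subset\trnn(S^1(\mesh;\Rt))$. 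Given any $(v_h,\rho_h)$, I would take as $\tau$ the near-optimizer produced by the continuous inf-sup \eqref{infsup} through the auxiliary elliptic problems \eqref{def_tau}--\eqref{def_v}; since $(v_h,\rho_h)$ is piecewise polynomial, elliptic regularity on the Lipschitz polygon $\Omega$ secures $\tau\in L_q(\Omega;\SS)$ for some $q>2$ with a regularity constant independent of $\mesh$, so that $\Inn{\mesh}\tau$ is a well-defined, uniformly bounded Fortin image, and transfers the continuous inf-sup to the discrete level.

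For coercivity on the discrete kernel $K_{0,h}$, note first that $\div\tau_h\in P^1(\mesh;\Rt)$ by construction of $\XX{\el}$, so the first kernel condition forces $\div_\mesh\tau_h=0$, while the second is the weak normal-continuity against $S^1_D(\cS)$. The discrete trace-dev-div Lemma~\ref{la_trdevdiv2} combined with \eqref{coercive_A} then gives $\vdual{\AA\tau_h}{\tau_h}\gtrsim\|\tau_h\|_{\div,\mesh}^2$ uniformly, provided $\id\notin K_{0,h}$. In Case 1 the initial-mesh assumption furnishes a nodal hat $v_h\in S^1_D(\mesh;\Rt)$ with $\dual{v_h\cdot\bn}{1}_{\G{ss}\cup\G{sf}}\neq 0$; via the identity $\dual{\trnn(v_h)}{\id}_\cS = \dual{v_h\cdot\bn}{1}_\Gamma$ (cf.~\eqref{dualid}) this shows $\id\notin K_{0,h}$, completing the uniform coercivity.

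For Case~2 ($\G{ss}\cup\G{sf}=\emptyset$), I would follow the continuous Case~2 of Theorem~\ref{thm_VF}: testing \eqref{FEMa} against $\dsigma=\id\in P^2_{nn}(\mesh)$ produces the discrete counterpart $\vdual{\tr(\sigma_h)}{1} = \vdual{\tr(\sigma_{0,h})}{1}$ of \eqref{trsigma0}, so the translated $\wtilde\sigma_h := \sigma_h-\sigma_{0,h}$ lies in the trace-free subspace $\{\tau_h\in P^2_{nn}(\mesh):\vdual{\tr(\tau_h)}{1}=0\}$, from which $\id$ is automatically excluded, and solves the discretization of the translated constrained system analogous to \eqref{VF2}. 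Steps~1 and~2 applied in this constrained setting yield quasi-optimality of $\wtilde\sigma_h$ with respect to $\wtilde\sigma = \sigma-\sigma_0$, which then translates into the stated bound for $\overline\sigma_h = \wtilde\sigma_h + \sigma_0$. I expect the main obstacle to be the Fortin step: $\Inn{\mesh}$ is not bounded on all of $H(\div,\Omega;\SS)$, so the continuous supremizer must be realised within $L_q$, $q>2$, with a regularity constant independent of $\mesh$; this must be argued carefully from the structure of \eqref{def_tau}--\eqref{def_v} and the elliptic regularity available on Lipschitz polygons.
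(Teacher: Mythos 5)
Your overall scaffolding (Brezzi framework, discrete inf-sup plus kernel coercivity, Lemma~\ref{la_trdevdiv2} for the trace estimate) matches the paper, and the Case~2 translation by $\sigma_{0,h}$ is also what the paper does. But there are two genuine gaps.

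\textbf{The Fortin step does not work as stated.} You propose to prove the discrete inf-sup by applying $\Inn{\mesh}$ to the continuous supremizer produced by the auxiliary problems \eqref{def_tau}--\eqref{def_v}. This fails for two structural reasons, beyond the $L_q$ issue you flag. First, that supremizer lies only in $H_{nn}(\div,\mesh;\SS)$; it is a solution of a variational problem over a \emph{broken} space, so there is no reason for it to be globally $\div$-conforming, whereas $\Inn{\mesh}$ is defined on $H(\div,\Omega;\SS)\cap L_q(\Omega;\SS)$ (Proposition~\ref{prop_Inn}) precisely because the nodal values \eqref{dof_el_nn} on interior edges must be single-valued. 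Second, even if one could argue some extra regularity, the problem \eqref{def_tau} is posed on the $\mesh$-dependent space $H_{nn}(\div,\mesh;\SS)$, so any elliptic-regularity constant would a priori depend on $\mesh$, which destroys the required uniformity. The paper circumvents this entirely by a direct finite-dimensional construction: given $w\in P^1(\mesh;\Rt)$ and $\rho=\trnn(v)$ with $v\in S^1_D(\mesh;\Rt)$, it defines a discrete $\tau\in P^2_{nn}(\mesh)$ explicitly by prescribing its degrees of freedom \eqref{dof_IS} (vanishing normal-normal traces, $\vdual{\tau}{c}=-\vdual{\strain{v}}{c}$, $\div_\mesh\tau=\alpha(w-v)$) and then establishing $\vdual{\div\tau}{w}_\mesh-\dual{\rho}{\tau}_\cS\gtrsim\|w\|^2+\|v\|_1^2$ via Young's inequality and Korn, together with $\|\tau\|_{\div,\mesh}\lesssim\|w\|+\|v\|_1$ by scaling. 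This avoids any regularity question and is local.

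\textbf{The coercivity argument is not uniform as phrased.} You invoke Lemma~\ref{la_trdevdiv2} ``provided $\id\notin K_{0,h}$'', but the constant in that lemma (via Lemma~\ref{la_trdevdiv}) depends on the closed subspace $\Sigma$ chosen. Taking $\Sigma=K_{0,h}$ would give a constant that depends on $\mesh$, which is exactly what must be avoided. The paper fixes a single mesh-\emph{independent} $\Sigma$: it picks $v_0\in S^1_D(\mesh_0;\Rt)$ on the \emph{initial} mesh with $v_0\cdot\bn\ge 0$ on $\G{ss}\cup\G{sf}$ and $v_0\cdot\bn=1$ at $a$, sets $\Sigma:=\{\tau\in L_2(\Omega;\SS);\ \vdual{\tau}{\strain{v_0}}=0\}$, notes that $v_0\in S^1_D(\mesh;\Rt)$ for every refinement $\mesh$ of $\mesh_0$, and deduces $K_{0,h}\subset\Sigma$ and $\id\notin\Sigma$. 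Your hat-function observation contains the right ingredient but stops short of exhibiting a fixed $\Sigma$; without that, ``$\id\notin K_{0,h}$'' alone does not deliver a $\mesh$-independent constant.

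With these two repairs, the rest of your outline (including the Case~2 translation and the identification of the infimum over $S^1_{g_{D,h}}(\cS)$ accounting for the Dirichlet data approximation) would match the paper's proof.
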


\begin{proof}
We use the same structure as in the proof of Theorem~\ref{thm_VF}.
The Dirichlet condition is moved to the right-hand side of \eqref{FEMa}
by adding $\dual{\trnn(u_{D,h})}{\dsigma}_\cS$ to both sides, thus
replacing $\eta_h$ with $\wtilde\eta_h:=\eta_h-\trnn(u_{D,h})\in S^1_D(\cS)$.
We verify the standard conditions for mixed methods.
The uniform (in $\nu$ and $\mesh$) boundedness of the bilinear form
and linear functionals is obvious.

{\bf Discrete inf-sup property.}
Let $w\in P^1(\mesh;\Rt)$ and $\rho\in S^1_D(\cS)$ be given with $\rho=\trnn(v)$ for a function
$v\in S^1_D(\mesh;\Rt)$.
For a parameter $\alpha>0$ to be selected, we define $\tau\in P^2_{nn}(\mesh)$ by
assigning degrees of freedom \eqref{dof_el} as follows,
\begin{subequations} \label{dof_IS}
\begin{alignat}{2}
   &\bn\cdot\tau\bn|_e=0                                  &&\forall e\in\cE,\label{dof_ISa}\\
   &\vdual{\tau}{c}=-\vdual{\strain{v}}{c}                &&\forall c\in P^0(\mesh;\SS),\label{dof_ISb}\\
   &\vdual{\div\tau}{q}_\mesh=\alpha\vdual{w-v}{q}\quad   &&\forall q\in P^1(\mesh;\Rt).\label{dof_ISc}
\end{alignat}
\end{subequations}
Note that \eqref{dof_ISc} implies $\div_\mesh\tau=\alpha(w-v)$.
The definition of duality $\dual{\rho}{\tau}_\cS$ and Young's inequality then show that
\begin{align*}
   &\vdual{\div\tau}{w}_\mesh - \dual{\rho}{\tau}_\cS
   =
   \vdual{\div\tau}{w-v}_\mesh - \vdual{\tau}{\strain{v}}
   =
   \alpha\vdual{w-v}{w-v} + \vdual{\strain{v}}{\strain{v}}\\
   &=
   \alpha\bigl(\|w\|^2 -2\vdual{w}{v} + \|v\|^2\bigr) + \|\strain{v}\|^2
   \ge
   \alpha (1-\delta)\|w\|^2 + \alpha(1-\delta^{-1})\|v\|^2 + \|\strain{v}\|^2
\end{align*}
for any $\delta>0$.
Choosing $\delta=1/2$ and using Korn's inequality \eqref{Korn}, we find that
\[
   \vdual{\div\tau}{w}_\mesh - \dual{\rho}{\tau}_\cS
   \ge
   \frac\alpha 2 \|w\|^2 - \alpha\|v\|^2 + \|\strain{v}\|^2
   \ge
   \frac\alpha 2 \|w\|^2 + \frac 12 \|\strain{v}\|^2
   + \Bigl(\frac{C(\Omega,\G{D})^2}2-\alpha\Bigr)\|v\|^2.
\]
Therefore, selecting $\alpha=C(\Omega,\G{D})^2/3$,
we have seen that $\tau\in P^2_{nn}(\mesh)$ satisfies
\[
   \vdual{\div\tau}{w}_\mesh - \dual{\rho}{\tau}_\cS
   \ge \frac 16 \min\bigl\{C(\Omega,\G{D})^2,3\bigr\}\bigl( \|w\|^2 + \|v\|_1^2\bigr).
\]
Noting that $\|v\|_1\ge \|\rho\|_{1/2,t,\cS}$ by definition of the trace norm,
the discrete inf-sup property then follows if we can bound
\[
   \|\tau\|_{\div,\mesh} \lesssim \|w\| + \|v\|_1.
\]
Relation $\div_\mesh\tau=\alpha(w-v)$ implies $\|\div\tau\|_\mesh\lesssim \|w\|+\|v\|$,
and scaling properties show that
\[
   \|\tau\|\lesssim \|\strain{v}\| + \|h_\mesh\|_{L_\infty(\Omega)} (\|w\| + \|v\|)
   \lesssim \|w\| + \|v\|_1.
\]
\ignore{Details zum Nachrechnen:
Considering an element $\el\in\mesh$, and using the transformations
$\tau_\el=\PiolaK{\el}(\widehat\tau)$, $v_\el=\PiolaCurl{\el}(\widehat v)$,
$w_\el=\PiolaCurl{\el}(\widehat w)$, $c_\el=\PiolaKc{\el}(\widehat c)$,
$q_\el=\PiolaCurl{\el}(\widehat q)$,
degrees of freedom \eqref{dof_IS} stemming from $\el$ transform into
\begin{alignat*}{2}
   \widehat\bn\cdot\widehat\tau\widehat\bn|_{\widehat e}
   &=0  &&\forall \widehat e\in\cE(\elref),\\
   \vdual{\widehat\tau}{\widehat c}_\elref
   &=-J_\el\vdual{J_\el B_\el^{-\T}\strainref{\widehat v}B_\el^{-1}}
                 {J_\el B_\el^{-\T}\widehat c B_\el^{-1}}_\elref
   \\
   &=-J_\el^3\vdual{B_\el^{-1}B_\el^{-\T}\strainref{\widehat v}B_\el^{-1}B_\el^{-\T}}
                   {\widehat c}_\elref
   &&\forall \widehat c\in P^0(\elref;\SS),\\
   \vdual{\divref\widehat\tau}{\widehat q}_\elref
   &=\alpha J_\el \vdual{J_\el B_\el^{-\T}(\widehat w-\widehat v)}
                        {J_\el B_\el^{-\T}\widehat q}_\elref\\
   &=\alpha J_\el^3 \vdual{B_\el^{-1}B_\el^{-\T}(\widehat w-\widehat v)}{\widehat q}_\elref
   &&\forall \widehat q\in P^1(\elref;\Rt),
\end{alignat*}
cf.~Lemma~\ref{la_trafo}.
Noting that $J_\el\simeq h_\el^2$ and $\|B_\el\|_\infty\simeq h_\el$,
and using the finite dimension of spaces, we find that
\[
   \|\widehat\tau\|_\elref
   \lesssim h_\el^2 \|\strainref{\widehat v}\|_\elref + h_\el^4 \|\widehat w-\widehat v\|_\elref.
\]
Recalling that $\|\tau_\el\|_\el\simeq h_\el^{-1}\|\widehat\tau\|_\elref$,
$\|\strainref{\widehat v}\|_\elref\simeq h_\el^{-1}\|\strain{v}\|_\el$, and
$\|\widehat w-\widehat v\|_\elref\simeq h_\el^{-2}\|w-v\|_\el$, we obtain the desired estimate.
\comment{Die Testfunktionen koennen einfach affin transformiert werden; wollte
aber meine Rutine nicht verlassen. Ende der Details.}}
If $\G{ss}\cup\G{sf}=\emptyset$, as in the continuous case we need to verify that
the inf-sup property also holds for the constrained tensor space,
\begin{align} \label{Pconstrained}
   \wtilde P_{nn}^2(\mesh) := \{\tau\in P_{nn}^2(\mesh);\; \vdual{\tr(\tau)}{1}=0\}.
\end{align}
This can be seen similarly as in the proof of Proposition~\ref{prop_norm}.
Indeed, setting $c:=\id$ in \eqref{dof_ISb}, we calculate (cf.~Remark~\ref{rem_Inn})
\[
   \vdual{\tr(\tau)}{1}=\vdual{\tau}{\id}=-\vdual{\strain{v}}{\id}
   =-\dual{v}{\bn}=-\dual{v}{\bn}_\G{hc}-\dual{v\cdot\bn}{1}_\G{sc}=0
\]
since $v\in H^1_D(\Omega;\Rt)$. We conclude that the constructed tensor
$\tau$ is an element of $\wtilde P_{nn}^2(\mesh)$.

{\bf Discrete coercivity and error estimate.}
We introduce the discrete kernel
\begin{align} \label{kernel}
   K_{0,h}:=\{\tau\in P^2_{nn}(\mesh);\;
   \vdual{\div\tau}{\du}_\mesh=0\ \forall \du\in P^1(\mesh;\Rt),\quad
   \dual{\deta}{\tau}_\cS=0\ \forall\deta\in S^1_D(\cS)\}.
\end{align}
In particular, any $\tau\in K_{0,h}$ satisfies $\div_\mesh\tau=0$ so that
the bilinear form $\vdual{\AA\cdot}{\cdot}$ is coercive on $K_{0,h}$ by
\eqref{coercive}. We conclude that there is a unique solution $(\sigma_h,u_h,\eta_h)$
of \eqref{FEM}.

To show the {\bf robustness of our scheme}, we recall \eqref{coercive_A}
and use Lemma~\ref{la_trdevdiv2} to control the remaining term $\|\tr(\tau)\|$.
To this end we need to identify a closed subspace
$\Sigma\subset L_2(\Omega;\SS)$ with $\id\not\in\Sigma$ and
argue that the coercivity of $\vdual{\AA\cdot}{\cdot}$ on $K_{0,h}\cap\Sigma$
suffices. We consider the two cases separately.

{\bf 1. Case $\G{ss}\cup\G{sf}\not=\emptyset$.}
In that case there exists $v_0\in S^1_D(\mesh_0;\R^2)$ with
$v_0\cdot\bn\ge 0$ on $\G{ss}\cup\G{sf}$ and $v_0\cdot\bn=1$ at $a$.
Since $\mesh$ is a refinement of $\mesh_0$ it follows that $v_0\in S^1_D(\mesh;\Rt)$.
We define
\[
   \Sigma:=\{\tau\in L_2(\Omega;\SS);\; \vdual{\tau}{\strain{v_0}}=0\}.
\]
It is clear that $\Sigma$ is a closed subspace of $L_2(\Omega;\SS)$
and that $\id\not\in\Sigma$ because
\(
   \vdual{\id}{\strain{v_0}}
   = \dual{v_0\cdot\bn}{1}_{\G{ss}\cup\G{sf}} > 0.
\)
The test functions $\du=v_0$ and $\deta=\trnn(v_0)$ in \eqref{kernel} show that
\begin{align*}
     0 = \dual{\trnn(v_0)}{\tau}_\cS -\vdual{\div\tau}{v_0}_\mesh
     = \vdual{\tau}{\strain{v_1}}\quad\forall\tau\in K_{0,h};
\end{align*}
whence $K_{0,h}\subset\Sigma$.
Therefore, Lemma~\ref{la_trdevdiv2} and bound \eqref{coercive_A} reveal that
\[
   \vdual{\AA\tau}{\tau}
   \gtrsim \|\dev\tau\|^2 + \|\tr(\tau)\|^2
   \simeq \|\tau\|_{\div,\mesh}^2
   \quad\forall\tau\in K_{0,h}.
\]
For the current case we have therefore verified the ingredients for the analysis
of mixed finite element schemes,
and one deduces the locking-free quasi-optimal convergence as stated in theorem.
We give some details for the less standard case below.

{\bf 2. Case $\G{ss}\cup\G{sf}=\emptyset$.}
The selection of $\dsigma=\id$ in \eqref{FEMa} shows that
\begin{align*}
   0=\vdual{\AA\sigma_h}{\id} + \vdual{u_h}{\div\id}_\mesh - \dual{\eta_h}{\id}_\cS
    = \frac 1{2(\lambda+\mu)} \vdual{\tr(\sigma_h)}{1}
   - \dual{u_{D,h}\cdot\bn}{1}_{\G{hc}\cup\G{sc}}
\end{align*}
because
\begin{align} \label{dualidh}
   \dual{\eta_h}{\id}_\cS = \dual{\eta_h}{\bn}_\Gamma
   = \dual{\eta_h}{\bn}_\G{hc} + \dual{\eta_h\cdot\bn}{1}_\G{sc}
   = \dual{u_{D,h}}{\bn}_\G{hc} + \dual{u_{D,h}\cdot\bn}{1}_\G{sc}
\end{align}
by \eqref{BCh}. We conclude that
\[
   \frac 1{2|\Omega|}\vdual{\tr(\sigma_h)}{1}\id
   = 
   \frac {\lambda+\mu}{|\Omega|} \dual{u_{D,h}\cdot\bn}{1}_{\G{hc}\cup\G{sc}} \id
   =
   \sigma_{0,h};
\]
that is, $\sigma_{0,h}$ from \eqref{sigma0h} is a datum
with $\vdual{\tr(\sigma_{0,h})}{1}=\vdual{\tr(\sigma_h)}{1}$.
As in the continuous case, we proceed with taking into account that
$\sigma_{0,h}$ may not vanish; some calculations are trivial if $\sigma_{0,h}=0\in\SS$.

We rewrite the mixed scheme by replacing $\sigma_h$ with $\wtilde\sigma_h:=\sigma_h-\sigma_{0,h}$,
giving
\begin{subequations} \label{FEM2}
\begin{alignat}{3}
   &\vdual{\AA\wtilde\sigma_h}{\dsigma} + \vdual{u_h}{\div\dsigma}_\mesh
  - \dual{\wtilde\eta_h}{\dsigma}_\cS
   &&= \dual{\trnn(u_{D,h})}{\dsigma}_\cS,\label{FEMa2}\\
   &\vdual{\div\wtilde\sigma_h}{\du}_\mesh - \dual{\deta}{\wtilde\sigma_h}_\cS
   &&= -\vdual{f}{\du}-\dual{\deta}{\sigma_N}_\cS\label{FEMb2}
\end{alignat}
\end{subequations}
for $\dsigma\in \wtilde P^2_{nn}(\mesh)$ (constrained space \eqref{Pconstrained}),
$\du\in P^1(\mesh;\Rt)$, and $\deta\in S^1_D(\cS)$.
Here, in the case that $\sigma_{0,h}\not=0$, we have used that
$\vdual{\AA\sigma_{0,h}}{\dsigma}=0$ (by the new constraint for $\dsigma$) and
$\dual{\deta}{\sigma_{0,h}}_\cS=0$ due to the homogeneous Dirichlet trace of $\deta$,
cf.~\eqref{dualidh}.

We observe that scheme \eqref{FEM} is equivalent to \eqref{FEM2},
with identical solutions
$(\sigma_h,u_h,\eta_h)=(\wtilde\sigma_h+\sigma_{0,h},u_h,\wtilde\eta_h+\trnn(u_{D,h}))$.
Lemma~\ref{la_trdevdiv2} is applicable with
$\Sigma:=\{\tau\in L_2(\Omega;\SS);\;\vdual{\tr(\tau)}{1}=0\}$,
implying coercivity
\begin{align*}
   \vdual{\AA\tau}{\tau}
   \gtrsim \|\tau\|_{\div,\mesh}^2
   \quad\forall\tau\in K_{0,h}\cap\Sigma.
\end{align*}
This concludes the verification of the ingredients of mixed finite element analysis
and our reformulated scheme \eqref{FEM2} is uniformly well posed and
converges uniformly quasi-optimally. Since non-homogeneous Dirichlet boundary data
for mixed methods are rarely analyzed in the literature, and since our
formulation is non-standard, let us present the details.

We introduce the bilinear form
\[
   \mathcal{B}((\tau,v,\rho);(\dtau,\dv,\drho)):=
   \vdual{\AA\tau}{\dtau}
   + \vdual{v}{\div\dtau}_\mesh +\vdual{\div\tau}{\dv}_\mesh
   - \dual{\rho}{\dtau}_\cS - \dual{\drho}{\tau}_\cS
\]
and spaces
\begin{align*}
   V_h &:=P^{2}_{nn}(\mesh)\times P^1(\mesh;\Rt)\times S^1(\cS)
   &&\hspace{-2em}\subset\
   V:=H_{nn}(\div,\mesh;\SS)\times L_2(\Omega;\Rt)\times H^{1/2}_t(\cS;\Rt),\\
   \wtilde V_h &:=\wtilde P^2_{nn}(\mesh)\times P^1(\mesh;\Rt)\times S^1_D(\cS)
   &&\hspace{-2em}\subset\
   \wtilde V :=\wtilde H_{nn}(\div,\mesh;\SS)\times L_2(\Omega;\Rt)\times H^{1/2}_{D,t}(\cS;\Rt)
\end{align*}
with (squared) norm
\[
   \|(\tau,v,\rho)\|_V^2 := \|\tau\|_{\div,\mesh}^2 + \|v\|^2 + \|\rho\|_{1/2,t,\cS}^2.
\]
(Recall \eqref{Hconstrained} resp. \eqref{Pconstrained}
for the definition of $\wtilde H_{nn}(\div,\mesh;\SS)$ resp. $\wtilde P^2_{nn}(\mesh;\SS)$.)
With the well-posedness of scheme \eqref{FEM2} we have shown the inf-sup property
\[
   \sup_{(\dtau,\dv,\drho)\in \wtilde V_h,\; \|(\dtau,\dv,\drho)\|_V=1}
   \mathcal{B}((\tau,v,\rho);(\dtau,\dv,\drho)) \gtrsim \|(\tau,v,\rho)\|_V\quad
   \forall (\tau,v,\rho)\in \wtilde V_h
\]
uniformly with respect to $\nu$ and $\mesh$.
We select any $(\tau,v,0)\in \wtilde V_h$ and $\rho\in S^1(\cS)$
with $\rho-\eta_h\in S^1_D(\cS)$, and use
formulations \eqref{VF2}, \eqref{FEM2} to conclude that
\begin{equation} \label{infsuph}
\begin{split}
   &\|(\wtilde\sigma_h-\tau,u_h-v,\eta_h-\rho)\|_V
   \lesssim \sup_{0\not=(\dsigma,\du,\deta)\in \wtilde V_h}
            \frac{\mathcal{B}((\wtilde\sigma_h-\tau,u_h-v,\eta_h-\rho);(\dsigma,\du,\deta))}
                 {\|(\dsigma,\du,\deta)\|_V}\\
   &\qquad=
   \sup_{0\not=(\dsigma,\du,\deta)\in \wtilde V_h}
   \frac{\mathcal{B}((\wtilde\sigma-\tau,u-v,\eta-\rho);(\dsigma,\du,\deta))}
        {\|(\dsigma,\du,\deta)\|_V}
   \lesssim
   \|(\wtilde\sigma-\tau,u-v,\eta-\rho)\|_V
\end{split}
\end{equation}
holds.
We define $\overline{\sigma}_h:=\sigma_h+\sigma_0-\sigma_{0,h}$ with $\sigma_0$ from
\eqref{sigma0} and note that
$\overline{\sigma}_h\in V_h$ and $\sigma-\overline{\sigma}_h=\wtilde\sigma-\wtilde\sigma_h$.
Then, bound \eqref{infsuph} and an application of the triangle inequality show that
\begin{align*}
   &\|\sigma-\overline{\sigma}_h\|_{\div,\mesh}
   + \|u-u_h\|
   + \|\eta-\eta_h\|_{1/2,t,\cS}
   =
   \|\wtilde\sigma-\wtilde\sigma_h\|_{\div,\mesh}
   + \|u-u_h\|
   + \|\eta-\eta_h\|_{1/2,t,\cS}\\
   &\lesssim
   \|\wtilde\sigma-\tau\|_{\div,\mesh} + \|u-v\| + \|\eta-\rho\|_{1/2,t,\cS}
   =
   \|\sigma-(\tau+\sigma_0)\|_{\div,\mesh} + \|u-v\| + \|\eta-\rho\|_{1/2,t,\cS}
\end{align*}
for any $(\tau,v,0)\in \wtilde V_h$ and $\rho\in S^1(\cS)$ with $\rho-\eta_h\in S^1_D(\cS)$.
The trace constraint $\vdual{\tr(\wtilde\tau)}{1}=\vdual{\tr(\sigma)}{1}$
(which $\wtilde\tau=\tau+\sigma_0$ satisfies)
is achieved by adding an appropriate multiple of tensor $\id$ to any
$\wtilde\tau\in P^2_{nn}(\mesh)$.
This only affects the $L_2$-norm $\|\sigma-\wtilde\tau\|$ so that
\begin{align*}
   \inf_{\tau\in \wtilde P^2_{nn}(\mesh)}
   \|\sigma-(\tau+\sigma_0)\|_{\div,\mesh}
   \le
   \inf_{\tau\in P^2_{nn}(\mesh)} \|\sigma-\tau\|_{\div,\mesh},
\end{align*}
and we can use this upper bound for the error estimate.
This finishes the proof of the theorem.
\end{proof}

It remains to show appropriate convergence orders of the mixed scheme.
In order to derive an optimal order for the displacement approximation
we need to specify a regularity shift.
Specifically, if $u_0\in H^1(\Omega;\Rt)$ solves
$-\div\CC\strain{u_0}=g$ in $\Omega$ with $g\in L_2(\Omega)$ and
homogeneous Dirichlet and Neumann boundary data according
to boundary decomposition \eqref{split}, we assume that
\begin{equation} \label{sreg}
   u_0\in H^{1+\sreg}(\Omega;\Rt) 
   \hspace{0.5em}\text{with}\hspace{0.5em}
   \|u_0\|_{1+\sreg} \le \creg \|g\|
\end{equation}
where $\creg>0$ and $0<\sreg\le 1$ are independent of $\nu$ and $g$.
For convex polygonal domains and homogeneous (hard clamped) Dirichlet
condition, $\sreg=1$ can be taken, whereas for general homogenous
boundary conditions and non-convex domains there is only a reduced shift $\sreg<1$.
For details we refer to \cite{Grisvard_89_SEE,SaendigRS_89_RBV,Roessle_00_CSR}.


\begin{theorem} \label{thm_FEM}
Given $0<s\le 1$, $s\le r\le 2$, and $f\in H^r(\Omega;\Rt)$, we assume that the
boundary data in \eqref{BC} are sufficiently smooth
so that the solution $u$ of \eqref{model} satisfies $u\in H^{1+s}(\Omega;\Rt)$.
Let $(\sigma,u,\eta)$ and $(\sigma_h,u_h,\eta_h)$ be the solutions of
\eqref{VF} and \eqref{FEM}, respectively, and recall that
$\lambda>0$ is one of the Lam\'e constants.
We assume that the assumptions of Theorem~\ref{thm_Cea} hold and denote
$\sigma_h^*:=\sigma_h$ if $\G{ss}\cup\G{sf}\not=\emptyset$ and, otherwise,
$\sigma_h^*:=\overline{\sigma}_h=\sigma_h+\sigma_0-\sigma_{0,h}$.
The error estimates
\begin{align*}
   \|\sigma-\sigma_h^*\|_{\div,\mesh} + \|u-u_h\| + \|\trnn(u)-\eta_h\|_{1/2,t,\cS}
   &\lesssim
   \|h_\mesh\|_{\infty}^s (\|u\|_{1+s} + \|\sigma\|_s + \|f\|_s),\\
   \|\div(\sigma-\sigma_h^*)\|_\mesh =
   \|\div(\sigma-\sigma_h)\|_\mesh &\lesssim \|h_\mesh\|_{\infty}^r \|f\|_r
\end{align*}
hold true.
Furthermore, assuming that \eqref{sreg} holds and the Dirichlet data vanish,
we have the estimate
\[
   \|u-u_h\| \lesssim
   \creg \|h_\mesh\|_\infty^{s+\sreg} \bigl(\|u\|_{1+s} + \|f\|_s\bigr).
\]
All the intrinsic constants appearing in the error estimates are independent of
$\nu$ and $\mesh$ (under the conditions specified in Theorem~\ref{thm_Cea}).
\end{theorem}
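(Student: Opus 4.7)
For the energy estimate, the plan is to invoke the quasi-optimal bound \eqref{bound_Cea} of Theorem~\ref{thm_Cea} with the explicit approximants $\tau=\Inn{\mesh}\sigma$, $v=\Pi^1_\mesh u$, and $\rho=\trnn(\Pi^{1,c}_\mesh u)$. The nodal interpolant $\Pi^{1,c}_\mesh u$ takes the values of $u$ (hence of $u_D$) at all boundary nodes by \eqref{Dir_approx}, so $\rho$ lies in the admissible set $S^1_{g_{D,h}}(\cS)$. The stress bound then follows from Proposition~\ref{prop_Inn} together with $\div\sigma=-f$ (using $\|h_\mesh\|_\infty\lesssim\|h_\mesh\|_\infty^s$); the displacement bound from a standard $L_2$-projection estimate on $P^1$; and the trace bound from the continuity of $\trnn$ together with the standard nodal interpolation estimate $\|u-\Pi^{1,c}_\mesh u\|_1\lesssim\|h_\mesh\|_\infty^s|u|_{1+s}$.

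For the divergence estimate, I would test \eqref{FEMb} with $\deta=0$ and use that $\div_\mesh\sigma_h|_\el\in P^1(\el;\Rt)$. This forces $\div_\mesh\sigma_h=-\Pi^1_\mesh f$, whence $\|\div(\sigma-\sigma_h)\|_\mesh=\|f-\Pi^1_\mesh f\|\lesssim\|h_\mesh\|_\infty^r\|f\|_r$ by standard $L_2$-projection estimates on piecewise $P^1$. The identity $\div\sigma_h^*=\div\sigma_h$ is immediate because $\sigma_0-\sigma_{0,h}$ is a constant tensor.

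For the improved $L_2$-estimate under vanishing Dirichlet data, I would run an Aubin--Nitsche argument on the dual mixed problem with zero boundary data and source pair $(-(u-u_h),0)$; its solution $(\psi,\phi,\chi)$ satisfies $\AA\psi=\strain{\phi}$, $-\div\psi=u-u_h$, and $\chi=\trnn(\phi)$, with $\|\phi\|_{1+\sreg}+\|\psi\|_\sreg\lesssim\creg\|u-u_h\|$ by the regularity shift \eqref{sreg}. Splitting $\psi=\Inn{\mesh}\psi+(\psi-\Inn{\mesh}\psi)$ in the identity $\|u-u_h\|^2=-\vdual{u-u_h}{\div\psi}$, the commutativity $\div\Inn{\mesh}\psi=\Pi^1_\mesh\div\psi$ of Proposition~\ref{prop_Inn} together with $u_h\in P^1(\mesh;\Rt)$ reduces the $(\psi-\Inn{\mesh}\psi)$-contribution to $\|u-\Pi^1_\mesh u\|^2$, while Galerkin orthogonality obtained by subtracting \eqref{FEMa} from \eqref{VFa} rewrites the $\Inn{\mesh}\psi$-contribution as $\vdual{\AA(\sigma-\sigma_h)}{\Inn{\mesh}\psi}-\dual{\eta-\eta_h}{\Inn{\mesh}\psi}_\cS$. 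A second application of the dual first equation with test $\sigma-\sigma_h$, combined with the discrete orthogonality tested at $\Pi^1_\mesh\phi$ and $\trnn(\Pi^{1,c}_\mesh\phi)\in S^1_D(\cS)$, further reduces the $\AA$-pairing to interpolation errors of $\phi$ and $\chi$, each of order $\|h_\mesh\|_\infty^\sreg$.

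The main obstacle is the trace pairing $\dual{\eta-\eta_h}{\Inn{\mesh}\psi}_\cS$. The plan is to split off $\Inn{\mesh}\psi-\psi$, whose product with $\eta-\eta_h$ is bounded by $\|\eta-\eta_h\|_{1/2,t,\cS}\|\Inn{\mesh}\psi-\psi\|_{\div,\mesh}$ via Proposition~\ref{prop_norm}, and to eliminate the remaining pairing $\dual{\eta-\eta_h}{\psi}_\cS$. The latter step is precisely where the zero Dirichlet hypothesis is essential: it forces $\eta\in H^{1/2}_{D,t}(\cS;\Rt)$ and $\eta_h\in S^1_D(\cS)$, while $\psi\in H_N(\div,\Omega;\SS)$ comes from the dual problem, so the pairing vanishes by Lemma~\ref{la_cont}. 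Each residual term will then carry the combined order $\|h_\mesh\|_\infty^{s+\sreg}$ times $\creg(\|u\|_{1+s}+\|\sigma\|_s+\|f\|_s)\|u-u_h\|$; absorbing one factor of $\|u-u_h\|$ and noting $\|\sigma\|_s\lesssim\|u\|_{1+s}$ then delivers the claimed bound.
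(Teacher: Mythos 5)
Your treatments of the energy estimate and the $\div$-error estimate match the paper's and are fine. Your Aubin--Nitsche structure is also essentially the paper's (the paper works with $w=\Pi^1_\mesh u-u_h$ rather than $u-u_h$, but this is cosmetic), and the reduction of the $\AA$-pairing via the dual first equation plus discrete orthogonality works out to the right rate.

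The genuine gap is in your treatment of the residual trace pairing $\dual{\eta-\eta_h}{\Inn{\mesh}\psi-\psi}_\cS$. You propose to bound it by
$\|\eta-\eta_h\|_{1/2,t,\cS}\,\|\Inn{\mesh}\psi-\psi\|_{\div,\mesh}$
using Proposition~\ref{prop_norm}, and you claim this carries order $h^{s+\sreg}\,\creg\,\|u-u_h\|$. It does not. The $\div$-part of the second factor is
\[
\|\div(\Inn{\mesh}\psi-\psi)\|_\mesh
=\|(1-\Pi^1_\mesh)\div\psi\|_\mesh
=\|(1-\Pi^1_\mesh)(u-u_h)\|
=\|u-\Pi^1_\mesh u\|,
\]
which is $O(h^{1+s}\|u\|_{1+s})$ but carries \emph{no} factor of $\|u-u_h\|$; alternatively it is $\le\|u-u_h\|$ but then carries no power of $h$ at all. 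Combining with $\|\eta-\eta_h\|_{1/2,t,\cS}=O(h^s)$ gives either a contribution $O(h^{1+2s})$ (quadratic in data, no absorption) or $O(h^s)\|u-u_h\|$; both are too weak as soon as $\sreg>1/2$ (for $\sreg=s=1$ you would only get $\|u-u_h\|\lesssim h$ instead of $h^2$).

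The fix, which is what the paper does (citing \cite{Fuehrer_18_SDM}), is to \emph{not} use the crude Cauchy--Schwarz duality bound but to open the pairing through an $H^1$ lift $v_\rho$ of $\eta-\eta_h$ and exploit that $(1-\Pi^1_\mesh)\div\psi$ is $L_2(\mesh)$-orthogonal to $P^1(\mesh;\Rt)$:
\[
\dual{\eta-\eta_h}{\Inn{\mesh}\psi-\psi}_\cS
=\vdual{\strain{v_\rho}}{\Inn{\mesh}\psi-\psi}
+\vdual{(1-\Pi^1_\mesh)v_\rho}{(1-\Pi^1_\mesh)\div\psi}_\mesh .
\]
Now the first term is $\lesssim\|v_\rho\|_1\,\|\Inn{\mesh}\psi-\psi\|\lesssim h^{\sreg}\creg\|v_\rho\|_1\|u-u_h\|$, and in the second term the extra $h$ comes from $\|(1-\Pi^1_\mesh)v_\rho\|\lesssim h\|v_\rho\|_1$, paired with $\|\div\psi\|=\|u-u_h\|$. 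Taking the infimum over $v_\rho$ yields $\lesssim\creg h^{\sreg}\|\eta-\eta_h\|_{1/2,t,\cS}\|u-u_h\|$, which is the bound you actually need. Without this refinement your argument does not prove the claimed $h^{s+\sreg}$ rate.

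A minor remark: your parenthetical $\|\sigma\|_s\lesssim\|u\|_{1+s}$ is not $\nu$-uniform (it hides a factor $\lambda$). The uniform statement, needed to drop $\|\sigma\|_s$ from the final bound, is $\|\sigma\|_s\lesssim\|u\|_{1+s}+\|f\|_s$, obtained from Lemma~\ref{la_trdevdiv} applied to $\sigma$ with $\div\sigma=-f$.
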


\begin{proof}
By Theorem~\ref{thm_Cea} we know that scheme \eqref{FEM} is well posed and converges
quasi-optimally, as specified in the cases~1 and 2.
The bound for $\|\div(\sigma-\sigma_h)\|_\mesh$ follows from relations
\eqref{FEMb} with $\deta=0$. (Note that $\div\sigma_h^*=\div\sigma_h$ because
$\div\sigma_0=\div\sigma_{0,h}=0$.)
The estimate for the total error follows by standard approximation results and
Proposition~\ref{prop_Inn}.

{\bf Error estimate for $u$.}
For brevity, let us denote $h:=\|h_\mesh\|_\infty$.
By assumption, the Dirichlet data vanish (implying that $\sigma_h^*=\sigma_h$).
We apply the Aubin--Nitsche technique for mixed formulations, cf.~\cite{DouglasR_85_GEM},
and split $u-u_h=v+w$ with $v:=u-\Pi^1_\mesh u$ and $w:=\Pi^1_\mesh u-u_h$.
Note that $\vdual{v}{\div\dsigma}_\mesh=0$ for $\dsigma\in P^2_{nn}(\mesh)$
since $\div_\mesh\dsigma\in P^1(\mesh)$. Therefore, denoting
\[
   \tau:=\sigma-\sigma_h\in H_{nn}(\div,\mesh;\SS)\quad\text{and}\quad
   \rho:=\eta-\eta_h\in H^{1/2}_{D,t}(\cS;\Rt),
\]
subtraction of \eqref{VF} and \eqref{FEM} shows that
$w=\Pi^1_\mesh u-u_h\in P^1(\mesh;\Rt)$ satisfies
\begin{subequations} \label{probw}
\begin{alignat}{3}
   &\vdual{\AA\tau}{\dsigma} + \vdual{w}{\div\dsigma}_\mesh
   - \dual{\rho}{\dsigma}_\cS
   &&= -\vdual{v}{\div\dsigma}_\mesh=0, \label{probw_a}\\
   &\vdual{\div\tau}{\du}_\mesh - \dual{\deta}{\tau}_\cS
   &&=0 
   \label{probw_b}
\end{alignat}
\end{subequations}
for any $\dsigma\in P^2_{nn}(\mesh)$, $\du\in P^1(\mesh;\Rt)$, and $\deta\in S^1_D(\cS)$.
Our aim is to bound $\|w\|$.
To this end let $u_w\in H^1(\Omega;\Rt)$ solve $\div\CC\strain{u_w}=w$
in $\Omega$ with homogeneous Dirichlet and Neumann conditions according to
decomposition \eqref{split} of $\Gamma$.
Owing to \eqref{sreg}, $u_w\in H^{1+\sreg}(\Omega;\Rt)\cap H^1_D(\Omega;\SS)$,
$\sigma_w:=\CC\strain{u_w}\in H^{\sreg,0}(\div,\Omega;\SS)\cap H_N(\div,\Omega;\SS)$, and
$\eta_w:=\trnn(u_w)\in\trnn\bigl(H^{1+\sreg}(\Omega;\Rt)\bigr)\cap H^{1/2}_{D,t}(\cS;\Rt)$.
By representation
$\sigma_w=2\mu\strain{u_w}+\lambda(\div u_w) \id$,
$\dev\sigma_w=2\mu(\strain{u_w}-\div u_w\id/2)$, so that
Lemma~\ref{la_trdevdiv} and \eqref{sreg} reveal that
\begin{align} \label{b_sigmaw}
   \|\sigma_w\|_\sreg \lesssim \|\dev\sigma_w\|_\sreg + \|\tr(\sigma_w)\|_\sreg
   \lesssim \|u_w\|_{1+\sreg} + \|w\|_{\sreg-1} \lesssim \|w\|
\end{align}
holds with a hidden constant that is independent of $w$ and $\nu$.
For what follows we set
$\wtilde u_w:=\Pi^{1,c}_\mesh u_w\in S^1_D(\mesh;\Rt)$,
$\wtilde\sigma_w:=\Inn{\mesh}\sigma_w\in P^2_{nn}(\mesh)$,
and $\wtilde\eta_w:=\trnn(\wtilde u_w)\in S^1_D(\cS)$.
Relations \eqref{probw_b} and an application of trace operator $\trnn$ show that
\begin{align*}
  0 &= -\vdual{\div\tau}{\wtilde u_w}_\mesh
     = -\vdual{\div\tau}{u_w}_\mesh + \vdual{\div\tau}{u_w-\wtilde u_w}_\mesh\\
    &=  \vdual{\AA\tau}{\CC\strain{u_w}} - \dual{\trnn(u_w)}{\tau}_\cS
       + \vdual{\div\tau}{u_w-\wtilde u_w}_\mesh\\
    &=  \vdual{\AA\tau}{\sigma_w-\wtilde\sigma_w} + \vdual{\AA\tau}{\wtilde\sigma_w}
       -\dual{\eta_w-\wtilde\eta_w}{\tau}_\cS + \vdual{\div\tau}{u_w-\wtilde u_w}_\mesh
\end{align*}
(note that $\dual{\wtilde\eta_w}{\tau}_\cS=0$ by \eqref{probw_b}),
that is,
\[
   -\vdual{\AA\tau}{\wtilde\sigma_w}
   = \vdual{\AA\tau}{\sigma_w-\wtilde\sigma_w} -\dual{\eta_w-\wtilde\eta_w}{\tau}_\cS
   + \vdual{\div\tau}{u_w-\wtilde u_w}_\mesh.
\]
Therefore, recalling that $w\in P^1(\mesh;\Rt)$, using the commutativity property of $\Inn{\mesh}$
(Proposition~\ref{prop_Inn}), relations \eqref{probw},
and noting that $\dual{\rho}{\sigma_w}_\cS=0$ by the chosen boundary conditions
(cf.~Lemma~\ref{la_Inn_cons}), we find that
\begin{align} \label{bound_w}
      \|w\|^2
   &= \vdual{w}{\div \sigma_w}
   = \vdual{w}{\div\wtilde\sigma_w}_\mesh
   = -\vdual{\AA\tau}{\wtilde\sigma_w} + \dual{\rho}{\wtilde\sigma_w}_\cS \nonumber\\
   &= \vdual{\AA\tau}{\sigma_w-\wtilde\sigma_w} -\dual{\eta_w-\wtilde\eta_w}{\tau}_\cS
   + \vdual{\div\tau}{u_w-\wtilde u_w}_\mesh
   - \dual{\rho}{\sigma_w-\wtilde\sigma_w}_\cS.
\end{align}
To continue bounding \eqref{bound_w} we need some estimates. The following ones
are standard or due to Proposition~\ref{prop_Inn},
bound \eqref{b_sigmaw} and regularity shift \eqref{sreg},
\begin{subequations} \label{err_std}
\begin{align}
   &\|\sigma_w-\wtilde\sigma_w\|=\|\sigma_w-\Inn{\mesh}\sigma_w\|
    \lesssim h^\sreg\|\sigma_w\|_\sreg\lesssim h^\sreg \|w\|,\\
   &\|\eta_w-\wtilde\eta_w\|_{1/2,t,\cS}=\|\trnn(u_w-\Pi^{1,c}_\mesh u_w)\|_{1/2,t,\cS}
   \lesssim h^\sreg\|u_w\|_{1+\sreg} \lesssim h^\sreg \|w\|,\\
   &\|u_w-\wtilde u_w\| \lesssim h^{1+\sreg} \|u_w\|_{1+\sreg} \lesssim h^{1+\sreg} \|w\|.
\end{align}
\end{subequations}
To estimate $\dual{\rho}{\sigma_w-\wtilde\sigma_w}_\cS$ we follow \cite{Fuehrer_18_SDM}
(see Theorem~5 there) and consider
any $v_\rho\in H^1(\Omega;\Rt)$ with $\trnn(v_\rho)=\rho$ to calculate
\begin{align*}
   \dual{\rho}{\sigma_w-\wtilde\sigma_w}_\cS
   &= \vdual{\strain{v_\rho}}{\sigma_w-\wtilde\sigma_w}
   + \vdual{v_\rho}{\div(\sigma_w-\wtilde\sigma_w)}_\mesh\\
   &= \vdual{\strain{v_\rho}}{\sigma_w-\Inn{\mesh}\sigma_w}
   + \vdual{v_\rho-\Pi^1_\mesh v_\rho}{(1-\Pi^1_\mesh)\div\sigma_w}_\mesh.
\end{align*}
Consequently,
\[
   \left|\dual{\rho}{\sigma_w-\wtilde\sigma_w}_\cS\right|
   \lesssim h^\sreg \|v_\rho\|_1 \|\sigma_w\|_\sreg + h \|v_\rho\|_1 \|\div\sigma_w\|
   \lesssim \creg h^\sreg \|v_\rho\|_1 \|w\|.
\]
Taking the infimum with respect to $v_\rho$ subject to $\trnn(v_\rho)=\rho$ we
have shown that
\begin{align} \label{bound_rho}
   \left|\dual{\rho}{\sigma_w-\wtilde\sigma_w}_\cS\right|
   \lesssim \creg h^\sreg \|\rho\|_{1/2,t,\cS} \|w\|.
\end{align}
A combination of \eqref{bound_w}, \eqref{err_std}, \eqref{bound_rho}
and the Cauchy--Schwarz inequality reveals that
\begin{align*}
   \|w\|^2
   &=
   \left|\vdual{\AA\tau}{\sigma_w-\wtilde\sigma_w} -\dual{\eta_w-\wtilde\eta_w}{\tau}_\cS
          +\vdual{\div\tau}{u_w-\wtilde u_w}_\mesh
          - \dual{\rho}{\sigma_w-\wtilde\sigma_w}_\cS
    \right|
   \\
   &\lesssim
   \creg h^{s'} \|w\| \Bigl(\|\tau\|_{\div,\mesh} + \|\rho\|_{1/2,t,\cS}\Bigr).
\end{align*}
Using the previously derived error estimate
for $\tau=\sigma-\sigma_h$ and $\rho=\eta-\eta_h$, we have shown that
\[
   \|w\| \lesssim \creg h^\sreg (\|\tau\|_{\div,\mesh} + \|\rho\|_{1/2,t,\cS})
         \lesssim \creg h^{s+\sreg} (\|u\|_{1+s} + \|f\|_s).
\]
The desired bound follows by an application of the triangle inequality.
This finishes the proof.
\end{proof}

\begin{remark}
Recall that the optimal-order error estimate for the displacement requires homogeneous Dirichlet
data. In fact, the appropriate treatment of non-homogeneous Dirichlet conditions
can be a delicate issue, cf.~\cite{BartelsCD_04_IDC}.
A projection-based approximation of the Dirichlet data is indicated
to derive an optimal-order displacement estimate in this case as well.
\end{remark}

\section{Numerical experiments} \label{sec_num}

We start with a comment about discrete trace space
\(
   S^1(\cS) = \trnn\Bigl(S^1(\mesh;\Rt)\Bigr)
\)
and the implementation of the terms $\dual{\rho}{\tau}_\cS$
for discrete functions $\rho\in S^1(\cS)$ and $\tau\in P^2_{nn}(\mesh)$.
Considering an individual triangle $\el\in\mesh$, it is easy to see that
there is a bijection between linear polynomials $v\in P^1(\el;\R)$ and
their tangential traces on $\partial\el$.
(Of course, there is also a bijection with the full traces.)
Therefore, we can identify trace space $S^1(\cS)$ with the standard space of nodal functions
$S^1(\mesh;\Rt)$. Then, recalling Remark~\ref{rem_trace}, we simply calculate
\[
   \dual{\trnn(v)}{\tau}_\cS
   = \sum_{\el\in\mesh} \sum_{e\in\cE(\el)} \dual{v}{\tau\bn}_{e}
   \quad (v\in S^1(\mesh;\Rt),\ \tau\in P^2_{nn}(\mesh)).
\]
Integrals $\vdual{f}{\du}$ and $\dual{\trnn(v)}{\sigma_{N}}_\cS=\dual{v}{\sigma_N}_\Gamma$
(cf.~Lemma~\ref{la_Inn_cons}) are calculated by Gauss formulas with three and two points
on elements and edges, respectively, whereas for error calculation we use a seven-point formula.
For all the examples we use uniform meshes consisting of $N$ triangles with diameter $h$.

Writing strain and stress as vectors in $\R^3$, the elasticity and compliance tensors are
\[
   \CC = \begin{pmatrix}
            2\mu+\lambda & 0 & \lambda\\ 0 & 2\mu & 0\\ \lambda & 0 & 2\mu+\lambda
         \end{pmatrix},\quad
   \AA = \frac 1{4\mu}
         \begin{pmatrix}
            1+\frac \mu{\mu+\lambda} & 0 & -\frac \lambda{\mu+\lambda}\\
            0 & 2 & 0\\
            -\frac \lambda{\mu+\lambda} & 0 & 1+\frac \mu{\mu+\lambda}
         \end{pmatrix}
\]
with Lam\'e constants
$\lambda=\frac {E\nu}{(1+\nu)(1-2\nu)}$, $\mu=\frac E{2(1+\nu)}$
for Young's modulus $E$ and Poisson ratio $\nu$. Throughout we select $E=1$.

\subsection{Smooth example in unit square} \label{sec_num_reg1}

We use the domain $\Omega=(0,1)^2$ and consider boundary decomposition \eqref{split} with
\[
   \G{hc}=(0,1)\times\{0\},\quad
   \G{sc}=\{1\}\times (0,1),\quad
   \G{ss}=(0,1)\times\{1\},\quad
   \G{sf}=\{0\}\times (0,1),
\]
assigning the four types of boundary conditions accordingly. We select the boundary data
and load $f$ so that
\[
   u(x_1,x_2)
   = \begin{pmatrix}
      \sin(3x_1)\cos(3x_2)\\ \cos(3x_1)\sin(3x_2)
     \end{pmatrix}
\]
is the solution of model problem \eqref{model} with $\sigma=\CC\strain{u}$.
One finds that none of the boundary data functions vanishes, cf.~\eqref{BC}.
Only the normal component of $\g{D}$ and tangential component of $\g{N}$ are zero.
This example fits case~1 of Theorem~\ref{thm_Cea}, and no correction of
stress approximation $\sigma_h$ is needed.

The corresponding approximation errors
$\|\sigma-\sigma_h\|$, $\|\div(\sigma-\sigma_h)\|_\mesh$, $\|u-u_h\|$,
normalized by the norm of the exact solution
$\bigl(\|u\|_1^2+\|\sigma\|_{\div}^2\bigr)^{1/2}$,
are shown in Figure~\ref{fig_err_reg1}, for $\nu=0.3$ on the left and for
$\nu=0.4999999$ on the right.
Recalling that trace approximation $\eta_h$ gives rise to a unique 
$\wtilde u_h\in P^1(\mesh;\R^2)\cap H^1(\Omega;\R^2)$ with $\trnn(\wtilde u_h)=\eta_h$,
we use $\strain{\wtilde u_h}$ as piecewise constant approximation of the strain
to calculate $|u-\wtilde u_h|_1=\|\strain{u}-\strain{\wtilde u_h}\|$,
also shown with normalization in the figure.
Up to the $L_2$ contribution $\|u-\wtilde u_h\|$,
$|u-\wtilde u_h|_1$ is an upper bound for $\|\trnn(u)-\eta_h\|_{1/2,t,\cS}$.
By Theorem~\ref{thm_FEM} we expect the overall convergence order $O(h)=O(N^{-1/2})$
and $\|\div(\sigma-\sigma_h)\|_\mesh=O(h^2)=O(N^{-1})$.
This is indeed confirmed by the shown results,
also indicating optimal convergence order $\|u-u_h\|=O(h^2)$
(only proved for the case of homogeneous Dirichlet data).
Furthermore, the results confirm that our scheme is locking free.
Note that $\tr(\sigma)=\tr(\CC\strain{u})=12(\mu+\lambda)\cos(3x_1)\cos(3x_2)$ and
$\dev\sigma=-6\mu\begin{pmatrix} 0 & \sin(3x_1)\sin(3x_2)\\ \sin(3x_1)\sin(3x_2) & 0\end{pmatrix}$
so that
$\|\tr(\sigma)\|$ and $\vdual{\tr(\sigma)}{1}$ tend to infinity when $\nu\to 1/2$,
while $\|\dev\sigma\|=O(1)$.
In other words, this is a suitable example to test for locking, cf.~\eqref{coercive_A}.


\begin{figure}[hb]
\hspace{-2em}
\includegraphics[width=0.55\textwidth]{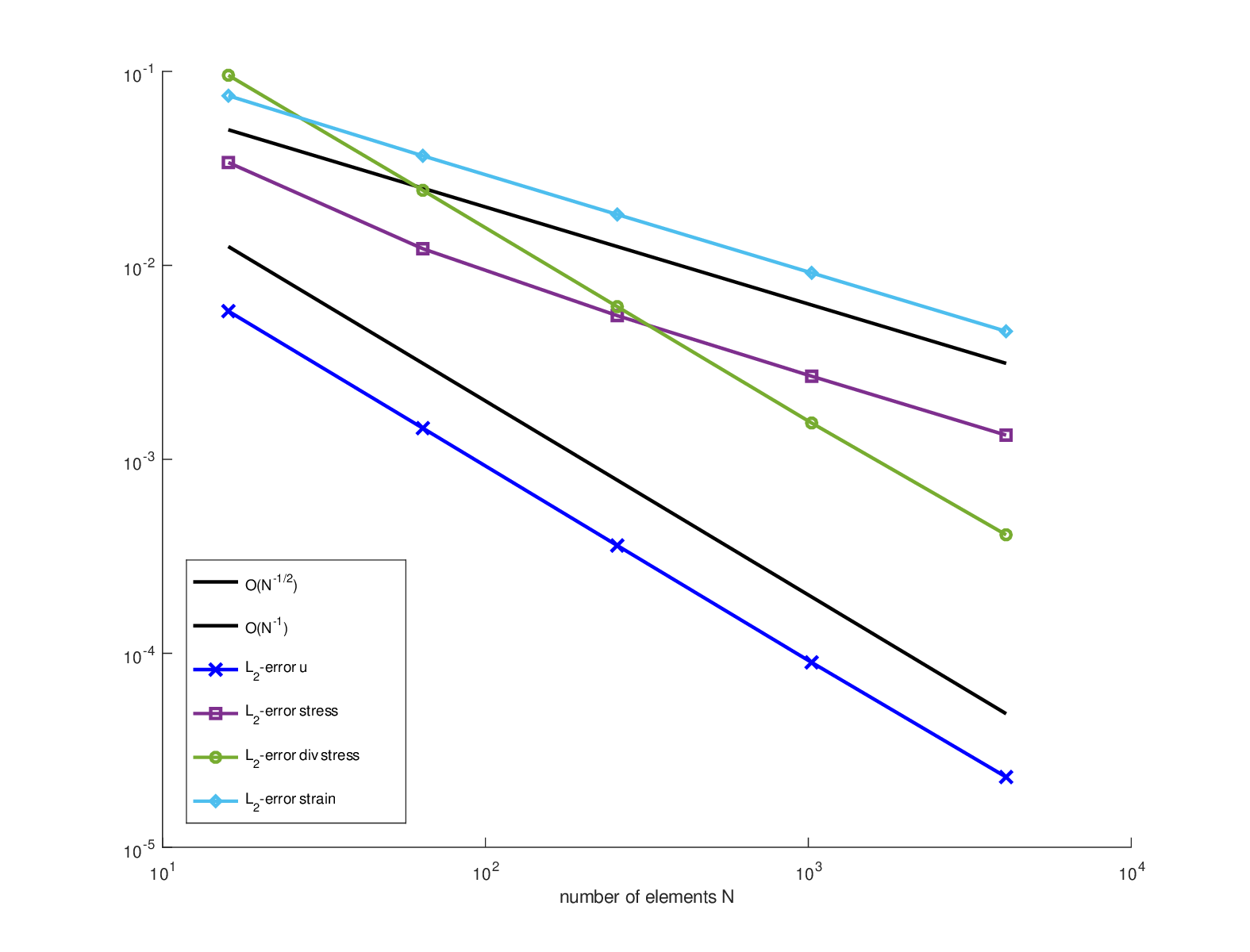}
\nobreak\hspace{-2.5em}
\includegraphics[width=0.55\textwidth]{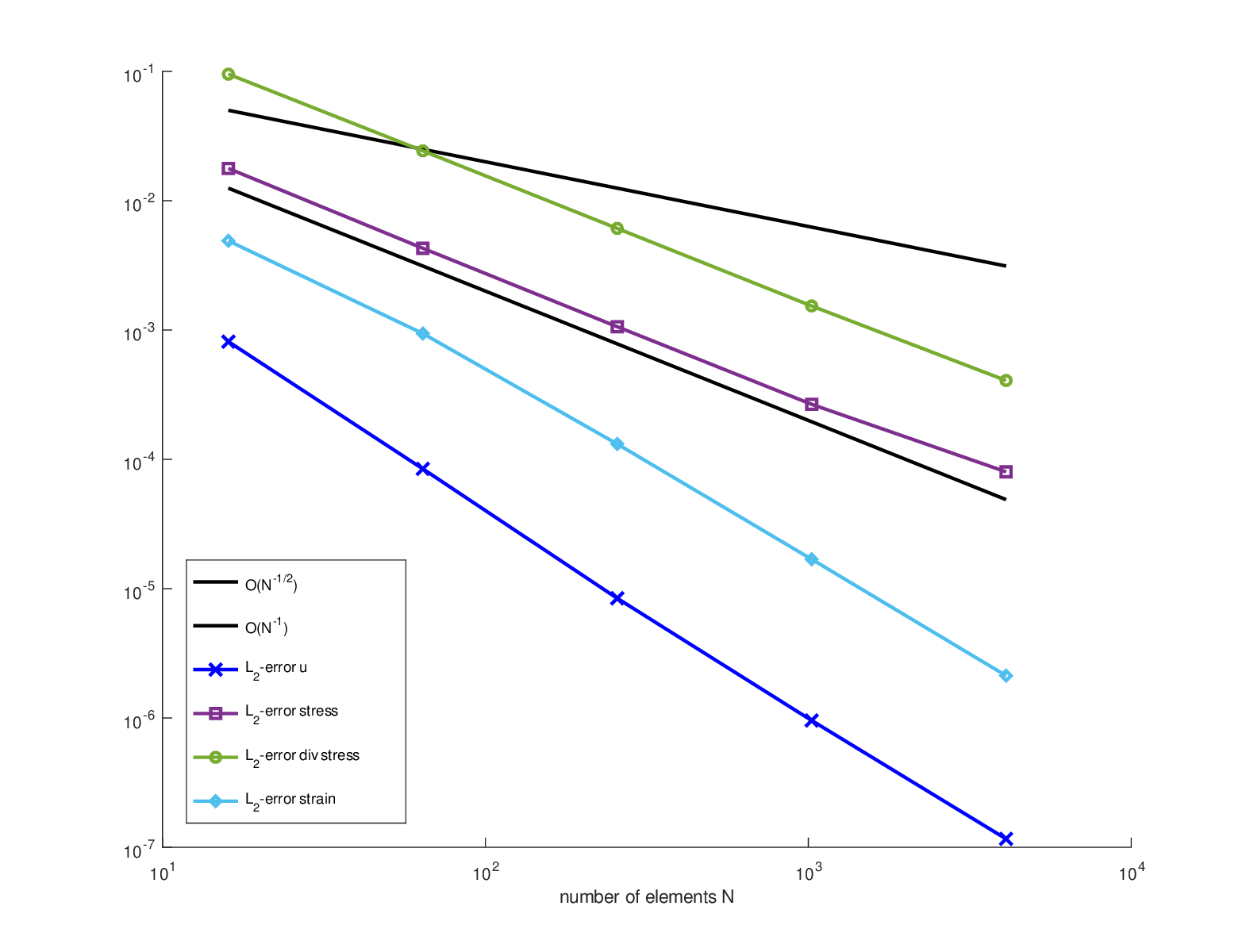}
\caption{Relative errors for the smooth example from \S\ref{sec_num_reg1},
         $\nu=0.3$ (left) and $\nu=0.4999999$ (right).}
\label{fig_err_reg1}
\end{figure}

\subsection{Singular example in L-shaped domain} \label{sec_num_sing1}

Considering the L-shaped domain $\Omega=(-1,1)^2\setminus [0,1)\times (-1,0]$,
we use the following singularity displacement from \cite{SaendigRS_89_RBV},
\[
   u(r,\phi) = r^\alpha \Bigl( C_1(\alpha)v_1(\alpha,\phi) + C_2(\alpha)v_2(\alpha,\phi)\Bigr)
\]
where, for $A(\alpha) := \frac {2(\lambda+3\mu)}{(\lambda+\mu)\alpha}$,
\begin{align*}
   v_1(\alpha,\phi)
   &= \begin{pmatrix}  -\cos(\alpha\phi) + \cos((\alpha-2)\phi)\\
                     \bigl(1-A(\alpha)\bigr)\sin(\alpha\phi) - \sin((\alpha-2)\phi)
     \end{pmatrix},\\
   v_2(\alpha,\phi)
   &= \begin{pmatrix}  -\bigl(1+A(\alpha)\bigr)\sin(\alpha\phi) + \sin((\alpha-2)\phi)\\
                     -\cos(\alpha\phi) + \cos((\alpha-2)\phi)
     \end{pmatrix},\\
   C_1(\alpha) &= -\cos(\alpha\omega) + \cos((\alpha-2)\omega),\quad
   C_2(\alpha) = -\bigl(1-A(\alpha)\bigr)\sin(\alpha\omega) + \sin((\alpha-2)\omega).
\end{align*}
Here, $(r,\phi)$ are the polar coordinates centered at the origin, and
$\alpha$ is a root of $\sin^2(\alpha\omega)-\frac{\alpha^2G^2}{(G-2)^2}\sin^2\omega$
with $G=-(\lambda+\mu)/\mu$. We select $\nu=0.3$.
The interior angle at the origin is $\omega=3\pi/2$, giving $\alpha\approx 0.59516$.
It follows that $u\in H^{1+s}(\Omega)$ and $\sigma=\CC\epsilon(u)\in H^s(\Omega)$
for any $s<\alpha$ solve \eqref{PDE} with $f=0$.
Furthermore, $u=0$ along the edges of $\Omega$ that meet at the origin
and on the remaining edges we use the Dirichlet data imposed by the given solution,
that is, $\G{hc}=\Gamma$.

Figure~\ref{fig_err_sing1} shows the relative errors for a sequence of uniformly
refined meshes. Since $\nu$ is not close to $1/2$ we use the standard stress
approximation $\sigma_h$ rather than $\overline{\sigma}_h$ as indicated in part~2
of Theorem~\ref{thm_Cea}.
The right-hand side function $f$ vanishes so that $\div_\mesh\sigma_h=0$
up to machine precision and, therefore, the error $\|\div(\sigma-\sigma_h)\|_\mesh=0$
is not plotted.
As before, we use $\eta_h$ to recover a continuous displacement approximation $\wtilde u_h$
and corresponding strain $\epsilon(\wtilde u_h)$ to plot an upper bound
(up to the $L_2$ contribution) of $\|\trnn(u)-\eta_h\|_{1/2,t,\cS}$.
By the given regularity we expect the total error to be of order $O(h^\alpha)=O(N^{-\alpha/2})$,
and $\|u-u_h\|=O(h^{2\alpha})=O(N^{-\alpha})$.
This is confirmed by the results.

\begin{figure}[hb]
\begin{center}
\includegraphics[width=0.7\textwidth]{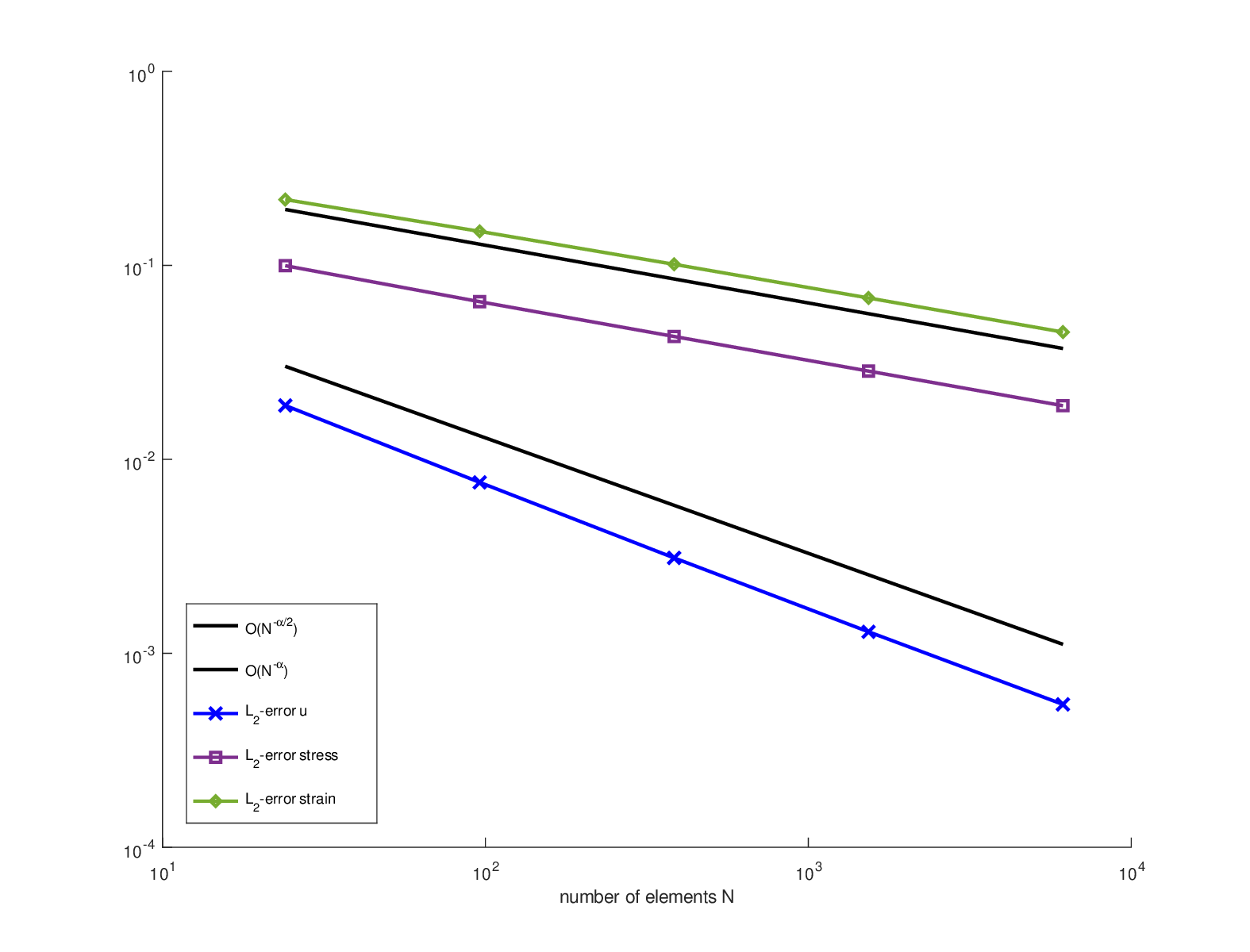}
\end{center}
\caption{Relative errors for the singular example from \S\ref{sec_num_sing1}.}
\label{fig_err_sing1}
\end{figure}

For illustration, Figures~\ref{fig_u}--\ref{fig_stress} show the displacement and stress
approximations on a mesh of 96 elements.

\begin{figure}[hb]
\begin{center}
\begin{subfigure}{\textwidth}
\includegraphics[width=\textwidth]{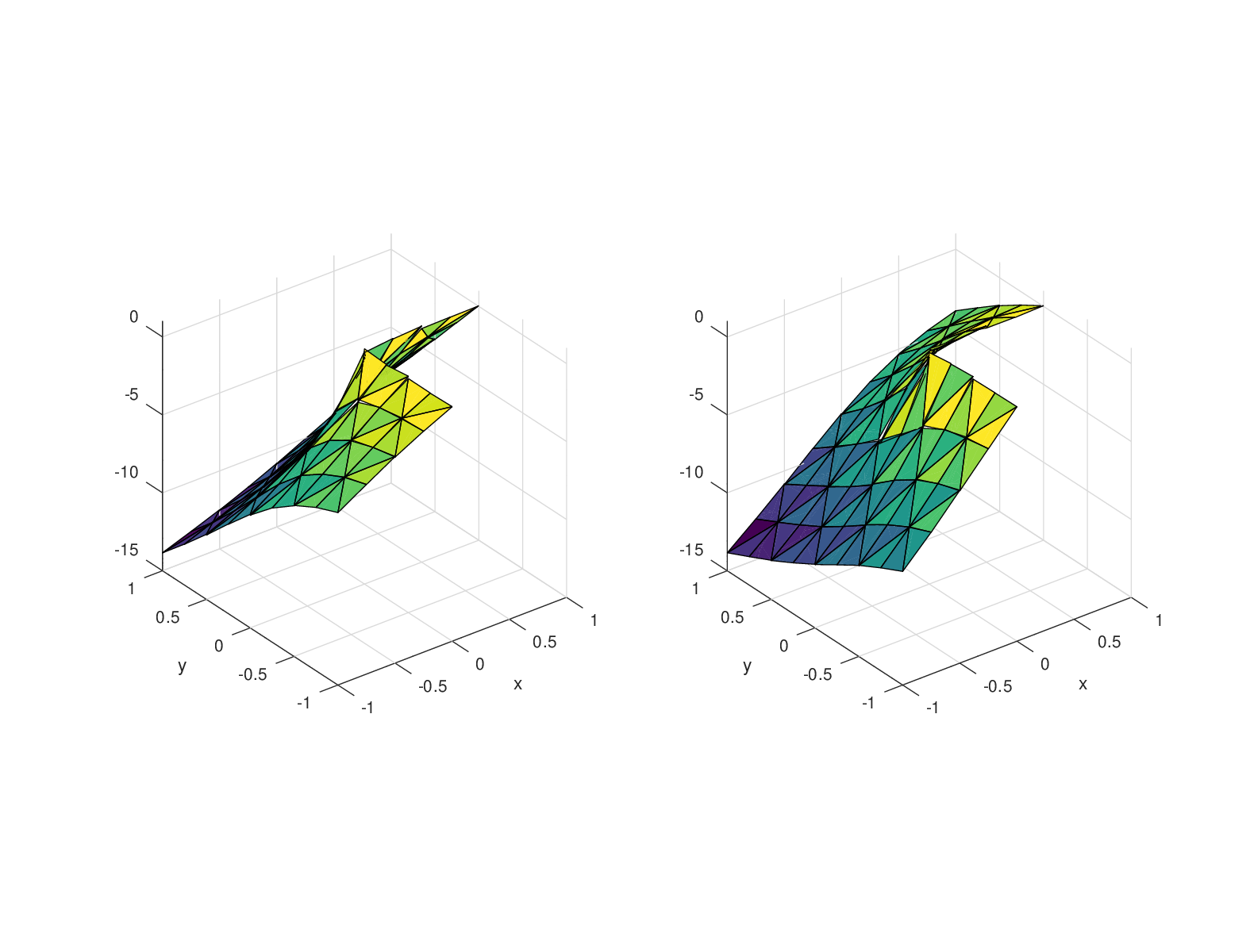}
\vspace{-7em}
\caption{Approximation $u_h$ (discontinuous).}
\end{subfigure}
\begin{subfigure}{\textwidth}
\includegraphics[width=\textwidth]{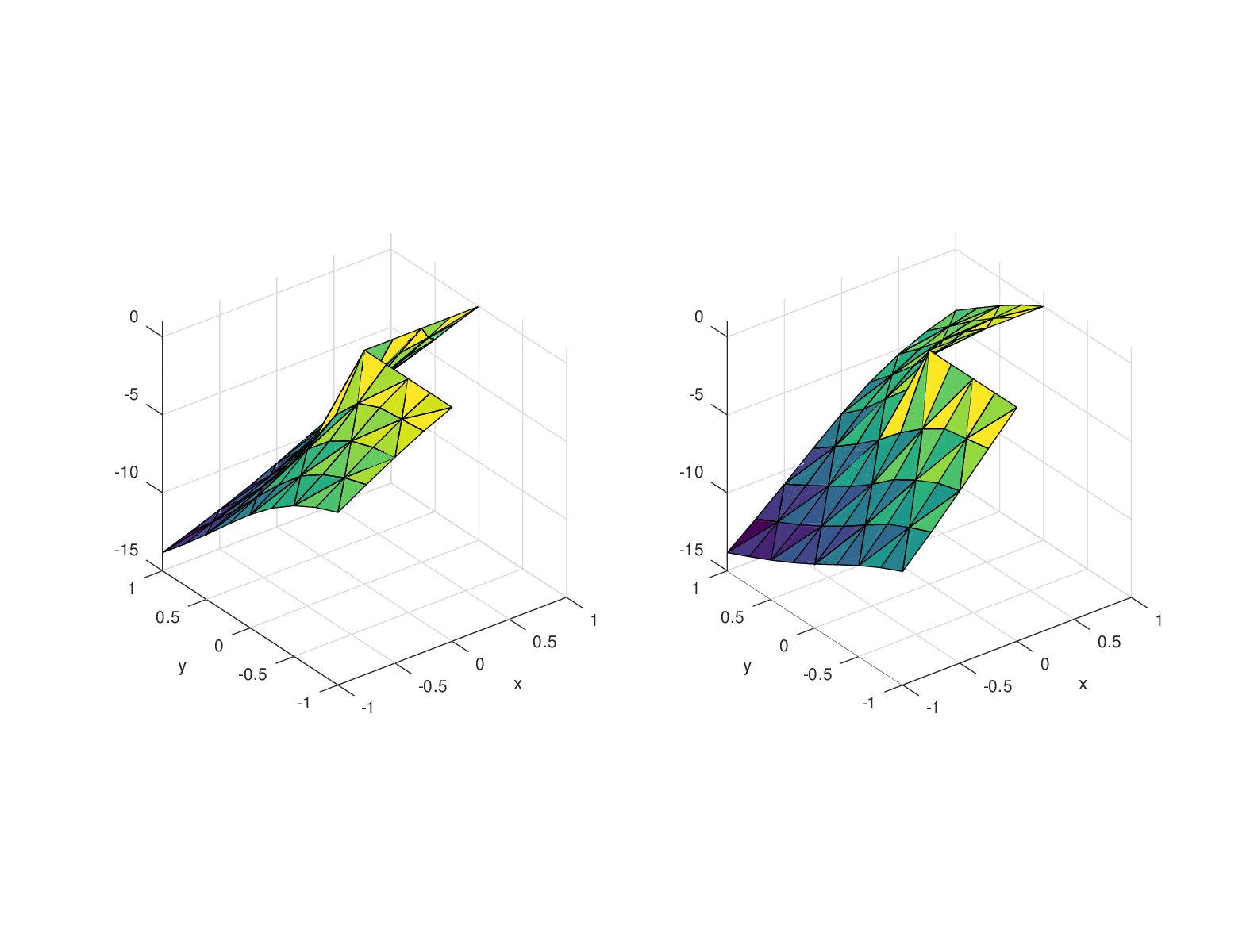}
\vspace{-7em}
\caption{Approximation recovered from $\eta_h$ (continuous).}
\end{subfigure}
\end{center}

\caption{Displacement approximation on a mesh of 96 elements, $u_1$ (left) and $u_2$ (right),
         for the singular example from \S\ref{sec_num_sing1}.}
\label{fig_u}
\end{figure}

\begin{figure}[hb]
\begin{center}
\includegraphics[width=\textwidth]{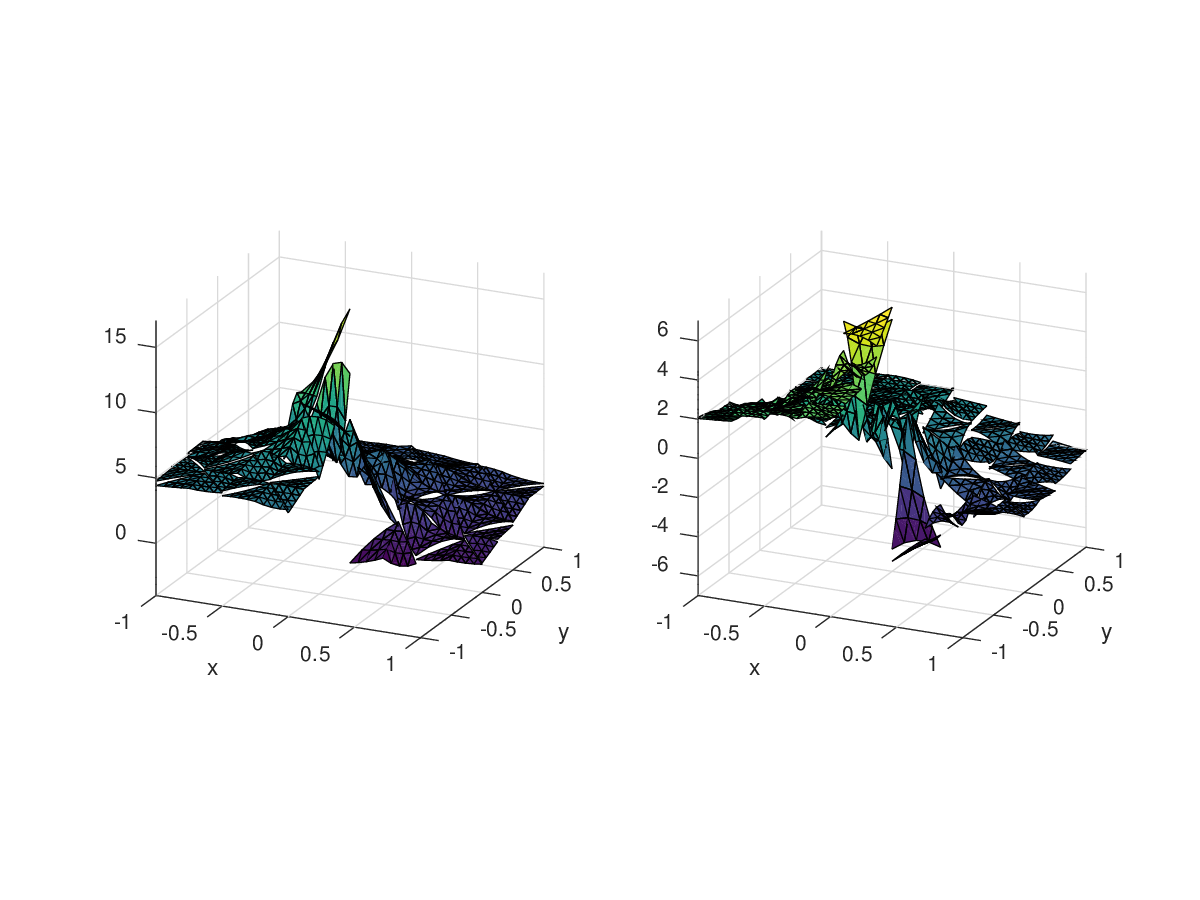}
\end{center}
\caption{Stress approximation on a mesh of 96 elements for the singular
         example from \S\ref{sec_num_sing1},
         $\sigma_{11}$ (left) and $\sigma_{12}$ (right).}
\label{fig_stress}
\end{figure}

\bibliographystyle{plain}
\bibliography{/home/norbert/tex/bib/bib,/home/norbert/tex/bib/heuer}

\end{document}